\documentclass[10pt]{amsart}
\usepackage{amsmath, amssymb}
\usepackage{amsfonts}
\usepackage{mathrsfs}
\usepackage[color, arrow,matrix,curve,cmtip,ps]{xy}
\usepackage{paralist}
\usepackage{amsthm}
\usepackage{tikz-cd}
\usetikzlibrary{arrows,calc,matrix}
\tikzset{
curvarr/.style={
  to path={ -- ([xshift=2ex]\tikztostart.east)
    |- (#1) [near end]\tikztonodes
    -| ([xshift=-2ex]\tikztotarget.west)
    -- (\tikztotarget)}
  }
}

\usepackage[final]{pdfpages}
\usepackage{dsfont,txfonts}
\usepackage{tikz}
\usepackage{rotating}
\usepackage{mathrsfs}
\PassOptionsToPackage{hyphens}{url}\usepackage{hyperref}
\usepackage{enumitem}
\usepackage{graphicx}
\usepackage{mathtools}
\usetikzlibrary{arrows,chains,matrix,positioning,scopes}
\usepackage{listings}
\lstdefinelanguage{GAP}{%
  morekeywords={%
    Assert,Info,IsBound,QUIT,%
    TryNextMethod,Unbind,and,break,%
    continue,do,elif,%
    else,end,false,fi,for,%
    function,if,in,local,%
    mod,not,od,or,%
    quit,rec,repeat,return,%
    then,true,until,while%
  },%
  sensitive,%
  morecomment=[l]\#,%
  morestring=[b]",%
  morestring=[b]',%
}[keywords,comments,strings]

\usepackage[T1]{fontenc}
\usepackage[variablett]{lmodern}
\usepackage{xcolor}
\lstset{
  basicstyle=\ttfamily,
  keywordstyle=\color{red},
  stringstyle=\color{blue},
  commentstyle=\color{green!70!black},
  columns=fullflexible,
}
\usepackage[citestyle=alphabetic,bibstyle=alphabetic]{biblatex}
\usepackage{lipsum}

\newtheorem{theorem}{Theorem}[section]
\theoremstyle{definition}
\newtheorem{lemma}[theorem]{Lemma}

\newtheorem{conjecture}[theorem]{Conjecture}

\newtheorem{proposition}[theorem]{Proposition}
\newtheorem{corollary}[theorem]{Corollary}
\newtheorem{definition}[theorem]{Definition}

\newtheorem{remark}[theorem]{Remark}

\newtheorem{example}[theorem]{Example}

\everymath{\displaystyle}
\allowdisplaybreaks

\newcommand{\set}[1]{\left\lbrace #1 \right\rbrace}
\newcommand*{\defeq}{\mathrel{\vcenter{\baselineskip0.5ex \lineskiplimit0pt
                     \hbox{\scriptsize.}\hbox{\scriptsize.}}}%
                     =}

\setcounter{section}{0}

\addbibresource{mybib.bib}

%

\begin{document}
\title[The passage from the integral to the rational group ring]{The passage from the integral to the rational group ring in algebraic $K$-theory}
\author{Georg Lehner}
\subjclass[2020]{Primary 19A31; Secondary 20C07, 19D35, 55P42.}
\keywords{$K$-theory of group rings, Farrell-Jones conjecture, negative $K$-theory.}
\begin{abstract}
An open question is whether the map $\widetilde{K_0 }\mathbb{Z} G \rightarrow \widetilde{K_0 }\mathbb{Q} G$ in reduced $K$-theory from the integral to the rational group ring is trivial for any group $G$. We will show that this is false, with a counterexample given by the group $QD_{32} *_{Q_{16}} QD_{32}$. We will also show how to compute the image of the map $\widetilde{K_0 }\mathbb{Z} G \rightarrow \widetilde{K_0 }\mathbb{Q} G$ using representation theoretic means, assuming $G$ satisfies the Farrell-Jones conjecture.
\end{abstract}
\maketitle

\section{Introduction}

Let $G$ be a group, $R$ a ring and define $RG$ to be the group algebra of $G$ over $R$. The algebraic $K$-theory group is defined as
$$K_0 R G \defeq \mathbb{Z}\set{\text{isomorphism classes of f.g.\ projective}~R G\text{-modules} }/\equiv$$
where $\equiv$ is the equivalence relation generated by $[ A \oplus B ] \equiv [A] + [B]$. 

For $R$ being the ring of integers, the subgroup of $K_0 \mathbb{Z} G$ generated by the free modules is always a split summand isomorphic to $\mathbb{Z}$. The \emph{reduced} $K$-theory group $\widetilde{ K_0 }\mathbb{Z} G$ is defined as the quotient of $K_0 \mathbb{Z} G$ by this summand.

The group $\widetilde{ K_0 } \mathbb{Z} G$ is an important invariant of $G$ appearing in a variety of geometric problems, most notably as the group containing Wall's finiteness obstruction. Wall \cite{wall1965finiteness} showed that for every finitely dominated CW-complex $X$ with fundamental group $G = \pi_1(X)$, there exists an element $w(X)$ in $\widetilde{ K_0 } \mathbb{Z} G$ which is trivial iff $X$ is actually finite. Moreover, every element of $\widetilde{ K_0 } \mathbb{Z} G$ can be realized in this way from some finitely dominated CW-complex.\footnote{See also \cite{varadarajan1989finiteness}}

However, $\widetilde{ K_0 } \mathbb{Z} G$ tends to be very hard to compute in general. One of the few structural things that can be said about $\widetilde{ K_0 }\mathbb{Z} G$ is a theorem due to Swan that for $G$ being a finite group $\widetilde{ K_0 } \mathbb{Z} G$ is finite.

Changing the base ring to the rational numbers, we can define $\widetilde{ K_0 }\mathbb{Q} G$ in a similar manner. As before, we obtain a splitting $K_0 \mathbb{Q} G \cong \mathbb{Z} \oplus \widetilde{ K_0 }\mathbb{Q} G$. For $G$ a finite group, $K_0 \mathbb{Q} G$ is inherently easier to compute than its integral counterpart. Since in this case $\mathbb{Q} G$ is a finite dimensional semisimple algebra over $\mathbb{Q}$, it splits as a product of matrix algebras over division algebras over $\mathbb{Q}$, one for each irreducible $\mathbb{Q}$-representation of $G$. This means that $ K_0 \mathbb{Q} G \cong \mathbb{Z}^{r_\mathbb{Q}}, $ where $r_\mathbb{Q}$ is the number of isomorphism classes of irreducible $\mathbb{Q}$-representations of $G$. In particular, $\widetilde{ K_0 }\mathbb{Q} G$ is a free abelian group of rank $r_\mathbb{Q} - 1$.

We can thus see that for $G$ being a finite group, the map $K_0 \mathbb{Z} G \rightarrow K_0 \mathbb{Q} G$ defined via $ [P] \mapsto [P \otimes \mathbb{Q}] $, for $P$ being a f.g.\ projective $\mathbb{Z}G$-module, is an isomorphism on the summands corresponding to free modules over $\mathbb{Z}G$ and $\mathbb{Q}G$ respectively, and trivial on the quotients
$$ \widetilde{ K_0 }\mathbb{Z} G \rightarrow \widetilde{ K_0 }\mathbb{Q} G,$$
since there it is a homomorphism from a finite to a free abelian group. Swan also showed the following slightly stronger result.

\begin{theorem}[Swan, \cite{Swan1960InducedRA}] \label{swansprojectivetheorem}
Suppose $G$ is a finite group and $P$ a finitely generated projective $\mathbb{Z} G$-module. Then $P \otimes \mathbb{Q}$ is free.
\end{theorem}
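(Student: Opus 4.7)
The plan is to upgrade the class-level statement already observed in the paragraph preceding the theorem---namely that the induced map $\widetilde{K_0 \mathbb{Z} G} \to \widetilde{K_0 \mathbb{Q} G}$ vanishes---to an actual isomorphism $P \otimes \mathbb{Q} \cong (\mathbb{Q} G)^k$. The bridge between these two statements is the fact that $\mathbb{Q} G$ is semisimple, so that $K_0 \mathbb{Q} G$ classifies finitely generated projectives up to honest isomorphism and not merely up to stable isomorphism.

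First, by Maschke's theorem $\mathbb{Q} G$ is semisimple, and by Wedderburn--Artin it splits as $\mathbb{Q} G \cong \prod_{i=1}^{r_\mathbb{Q}} M_{n_i}(D_i)$ for suitable division algebras $D_i$ over $\mathbb{Q}$. A finitely generated $\mathbb{Q} G$-module is then determined up to isomorphism by a tuple of simple-module multiplicities $(m_1, \ldots, m_{r_\mathbb{Q}})$, and its class in $K_0 \mathbb{Q} G \cong \mathbb{Z}^{r_\mathbb{Q}}$ records precisely this tuple. In particular, two f.g.\ projective $\mathbb{Q} G$-modules are isomorphic if and only if they have the same class in $K_0 \mathbb{Q} G$: cancellation holds and stably free implies free.

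Applying this to $P \otimes \mathbb{Q}$: because the reduced map $\widetilde{K_0 \mathbb{Z} G} \to \widetilde{K_0 \mathbb{Q} G}$ is zero (finite source, torsion-free target), the class $[P \otimes \mathbb{Q}]$ lies in the free summand spanned by $[\mathbb{Q} G]$, so equals $k[\mathbb{Q} G]$ with $k = \dim_\mathbb{Q}(P \otimes \mathbb{Q})/|G|$. By the classification above, $P \otimes \mathbb{Q} \cong (\mathbb{Q} G)^k$, which is free. There is no substantive obstacle in this assembly; the genuinely deep input, Swan's finiteness of $\widetilde{K_0 \mathbb{Z} G}$, has already been cited, and the only point one should double-check is that the vanishing of the map at the level of classes is immediate from source being torsion and target torsion-free.
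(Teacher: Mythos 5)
Your deduction is formally correct: granting (i) Swan's finiteness of $\widetilde{K_0 \mathbb{Z}G}$, which the paper cites in the introduction, and (ii) the fact that over the semisimple ring $\mathbb{Q}G$ the class in $K_0$ determines a finitely generated module up to honest isomorphism, the theorem follows as you describe. Point (ii) is the substantive observation closing the gap the paper calls Swan's ``slightly stronger result,'' and you state it correctly: over a semisimple ring every finitely generated module is a direct sum of simple modules with well-defined multiplicities, so $[M]=[N]$ in $K_0$ forces $M\cong N$, and in particular stably free implies free.

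Two caveats. First, the paper offers no proof of this statement; it is cited from \cite{Swan1960InducedRA}, and the paragraph preceding it establishes only the $K_0$-level vanishing, so there is no ``paper proof'' to compare against. Second, and more importantly, your logical order reverses Swan's own development in a way that risks circularity. Swan proves the freeness of $P\otimes\mathbb{Q}$ directly, with Artin's induction theorem as the technical engine, and only afterwards deduces the finiteness of $\widetilde{K_0\mathbb{Z}G}$ (using Jordan--Zassenhaus to bound the kernel of $K_0\mathbb{Z}G\to K_0\mathbb{Q}G$). Indeed, ``$\widetilde{K_0\mathbb{Z}G}$ is finite'' is essentially equivalent to ``the image of $K_0\mathbb{Z}G\to K_0\mathbb{Q}G$ is $\mathbb{Z}\cdot[\mathbb{Q}G]$'': a finite group maps trivially to a free abelian group, and conversely, without control of the image, the finiteness of the kernel (which is what Jordan--Zassenhaus provides) does not by itself bound $\widetilde{K_0\mathbb{Z}G}$ --- compare the order $\mathbb{Z}\times\mathbb{Z}$ in $\mathbb{Q}\times\mathbb{Q}$, where $\widetilde{K_0}\cong\mathbb{Z}$ precisely because the image is too large. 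So treating the finiteness theorem as a black box and deriving the present theorem from it is a correct repackaging, useful for explaining why the $K_0$-level and module-level statements coincide over $\mathbb{Q}G$, but it relocates rather than removes the deep input, and it should not be read as an independent proof.
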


The statement that $\widetilde{ K_0 }\mathbb{Q} G$ is free does not generalize to arbitrary groups. In fact, Kropholler, Moselle \cite{kropholler_moselle_1991}, and Leary \cite{learytorsionprojective} constructed specific examples of groups which have $2$-torsion elements in $K_0 \mathbb{Q} G$. This means we cannot expect to find a straightforward generalization of Swan's theorem to infinite groups.

Bass \cite{Bass1976EulerCA} investigated this question and formulated what is now known as the strong Bass conjecture for $K_0 \mathbb{Z} G$. For this, let $r  \colon  K_0 \mathbb{Z} G \rightarrow HH_0 (\mathbb{Z}G)$ denote the Hattori-Stallings trace map and define $r_P(g)$ as the coefficient in the sum
$$ r(P) = \sum_{[g] \in \text{Conj}(G)} r_P(g) $$
under the isomorphism $HH_0(RG) = \bigoplus_{ \text{Conj}(G) } R$. This gives a function $r_P : G \rightarrow \mathbb{Z}$.

\begin{conjecture}[Strong Bass Conjecture for $K_0 \mathbb{Z} G$, \cite{Bass1976EulerCA}]
The value $r_P(g)$ is $0$ for $g \neq 1$.
\end{conjecture}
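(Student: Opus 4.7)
The strategy I would pursue splits according to the order of the conjugacy class representative $g \neq 1$ into two qualitatively different cases: $g$ of finite order, and $g$ of infinite order. For a finitely generated projective $\mathbb{Z} G$-module $P$, the Hattori-Stallings trace $r_P$ is a class function with rational values, and the central input available to us from the excerpt is Swan's Theorem \ref{swansprojectivetheorem}, which says that after tensoring with $\mathbb{Q}$ every projective over a \emph{finite} group ring becomes free.

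For $g$ of finite order, set $H = \langle g \rangle$ and, more generally, let $F$ range over finite subgroups of $G$ containing $g$. The first step would be to show that the value $r_P(g)$ is determined by the restriction of $P$ to such an $F$: concretely, that the component of $r_P$ at the conjugacy class of $g$ in $G$ is the sum, over the $G$-conjugacy class of $g$ intersected with $F$, of the corresponding trace components of $\operatorname{Res}^G_F P$. Since restriction of projectives is projective, Swan's theorem then applies to $\operatorname{Res}^G_F P \otimes \mathbb{Q}$, which is a free $\mathbb{Q} F$-module; the Hattori-Stallings trace of a free module is concentrated at the identity, so every such contribution vanishes for $g \neq 1$. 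The delicate bookkeeping here is checking that conjugacy classes in $F$ that fuse inside $G$ only redistribute mass within the $(g)$-class and not between classes of different orders.

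For $g$ of infinite order no such finite reduction is available, and this is where I expect the main obstacle to lie. The strategy I would try is to extend $r$ along an inclusion $\mathbb{Z} G \hookrightarrow A$ into a larger ring in which the class of $g$ can be algebraically isolated — a natural candidate being the group von Neumann algebra of $G$ after passage to an appropriate trace-completion, as in the $L^2$-approach of L\"uck — and then to show two things: that projective $\mathbb{Z} G$-modules remain well-behaved after extension of scalars to $A$, and that the extended trace of any $A$-projective vanishes off the identity. The second statement is essentially an analytic/homotopical input coming from an isomorphism conjecture of Bost, Baum-Connes, or Farrell-Jones type.

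The honest assessment is that the conjecture is open in general, so the realistic outcome of the plan above is not a proof but rather a reduction: the finite-order case can be handled by the Swan-plus-restriction argument sketched above, while the infinite-order case must be bootstrapped from an isomorphism conjecture. This matches the broader setup of the present paper, where the Farrell-Jones conjecture is the assumed input for the structural statements announced in the abstract.
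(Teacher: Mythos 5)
There is nothing to compare the proposal against: the statement you are addressing is a \emph{conjecture} in the paper, not a theorem, and the paper offers no proof. It merely cites L\"uck--Reich for the facts that the Strong Bass Conjecture follows from the rational triviality of $\widetilde{K_0\mathbb{Z}G}\to\widetilde{K_0\mathbb{Q}G}$ and that the Farrell--Jones conjecture implies that triviality. You do recognise this in your closing paragraph, which is to your credit; but the task of ``proving'' the statement cannot be completed, and your sketch should not be read as if it could be.

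That said, the finite-order half of your sketch contains a genuine gap that should be flagged. You propose to restrict a finitely generated projective $\mathbb{Z}G$-module $P$ to a finite subgroup $F$ containing $g$ and apply Swan's Theorem~\ref{swansprojectivetheorem} to $\operatorname{Res}^G_F P$. But when $G$ is infinite, $\mathbb{Z}G$ is free of infinite rank over $\mathbb{Z}F$, so $\operatorname{Res}^G_F P$ is \emph{not} finitely generated over $\mathbb{Z}F$. Swan's theorem is a statement about finitely generated projectives, and the Hattori--Stallings trace itself is only defined in the finitely generated setting (it arises from a finite idempotent matrix). So the reduction ``restrict, then apply Swan'' simply does not get off the ground, and the bookkeeping about fusion of conjugacy classes that you worry about is not the obstacle --- the obstacle is one step earlier. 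The torsion case of the Bass conjecture requires a different mechanism (in the literature it is handled via integrality/valuation arguments in the style of Bass--Zalesskii, or via cyclic-homology and assembly techniques), not by restriction to finite subgroups.

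Your infinite-order discussion, by contrast, is aligned with what the paper actually does: the passage through an isomorphism conjecture (here Farrell--Jones, via L\"uck--Reich) is precisely the route the paper endorses, and you are right that without such an input the statement remains open. If you want this to match the paper's framing, replace the von Neumann algebra/Baum--Connes suggestion with the Farrell--Jones assembly map for $\mathbf{K}\mathbb{Z}$ and $\mathbf{K}\mathbb{Q}$, which is the specific mechanism the paper invokes.
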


Lück, Reich \cite{Lueck_Reich_2005}, Section 3.1.3, show that the strong Bass conjecture for $K_0 \mathbb{Z} G$ follows from the stronger claim that the map
$$ \widetilde{ K_0 }\mathbb{Z} G \rightarrow \widetilde{ K_0 }\mathbb{Q} G $$
vanishes rationally, and they also show that this holds true if $G$ satisfies the Farrell-Jones conjecture.
\begin{theorem}[\cite{Lueck_Reich_2005}, Proposition 3.11] \label{FJimpliesrationalvanishing}
Assume $G$ satisfies the Farrell-Jones conjecture. Then the map $\widetilde{ K_0 }\mathbb{Z} G \otimes \mathbb{Q} \rightarrow \widetilde{ K_0 }\mathbb{Q} G \otimes \mathbb{Q}$ is trivial.
\end{theorem}
We will give a definition of the Farrell-Jones conjecture below. In Remark 3.13 they ask whether this is true integrally. This conjecture was also published in \cite{Reich2008}.

\begin{conjecture}[Integral $\widetilde{ K_0 }\mathbb{Z} G$-to-$\widetilde{ K_0 }\mathbb{Q} G$-conjecture]
The map $\widetilde{ K_0 }\mathbb{Z} G \rightarrow \widetilde{ K_0 }\mathbb{Q} G$ is trivial.
\end{conjecture}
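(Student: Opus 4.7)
My plan is to \emph{disprove} the conjecture, following the counterexample announced in the abstract, namely the amalgamated free product
$$ G = QD_{32} *_{Q_{16}} QD_{32}. $$
The overall idea is to produce an element of $\widetilde{K_0 \mathbb{Z}G}$ that records the non-trivial lower $K$-theory of the edge group $Q_{16}$, and then check that it survives on passing to $\widetilde{K_0 \mathbb{Q}G}$.

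\textbf{Step 1 (Mayer--Vietoris comparison).} The first thing I would set up is Waldhausen's long exact sequence for the amalgamated product $A *_C B$, comparing $K_*(\mathbb{Z}G)$ with $K_*(\mathbb{Z}A) \oplus K_*(\mathbb{Z}B)$ and $K_*(\mathbb{Z}C)$, up to $\widetilde{\mathrm{Nil}}$-corrections. The analogous sequence with $\mathbb{Q}$-coefficients is much cleaner: $\mathbb{Q}C$ is semisimple, so the Nil terms vanish and $K_{-1}(\mathbb{Q}C)=0$. This produces a commutative ladder with boundary $\partial_{\mathbb{Z}}\colon K_0(\mathbb{Z}G)\to K_{-1}(\mathbb{Z}C)$ on top and zero boundary $\partial_{\mathbb{Q}}$ on the bottom; it is the mismatch between these two that will carry the counterexample.

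\textbf{Step 2 (Non-trivial boundary class).} Using Carter's computations of $K_{-1}$ for integral group rings of finite $2$-groups, together with the induction maps $K_{-1}(\mathbb{Z}Q_{16})\to K_{-1}(\mathbb{Z}QD_{32})$, I would exhibit a non-zero class $\beta \in K_{-1}(\mathbb{Z}Q_{16})$ in the kernel of both induction maps into $K_{-1}(\mathbb{Z}QD_{32})\oplus K_{-1}(\mathbb{Z}QD_{32})$. Exactness then yields a class $\alpha \in K_0(\mathbb{Z}G)$ with $\partial_{\mathbb{Z}}\alpha=\beta \neq 0$, which by construction is \emph{not} induced from any projective over $\mathbb{Z}A$ or $\mathbb{Z}B$. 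By Theorem~\ref{swansprojectivetheorem}, any induced class $\mathrm{Ind}_A^G[P_A]\otimes \mathbb{Q}$ (or from $B$) is already rationally free, so $\alpha$ being non-induced is a necessary ingredient.

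\textbf{Step 3 (Survival under $\otimes\mathbb{Q}$).} Swan's theorem forces the composition $K_0(\mathbb{Z}A)\oplus K_0(\mathbb{Z}B) \to \widetilde{K_0(\mathbb{Q}G)}$ through induction to be zero, so a class in $\widetilde{K_0(\mathbb{Z}G)}$ maps non-trivially to $\widetilde{K_0(\mathbb{Q}G)}$ whenever its rational image is not hit by $\mathrm{Ind}(P_A\oplus P_B)\otimes \mathbb{Q}$ with $P_A, P_B$ free. Using the rational Mayer--Vietoris presentation
$$ K_0(\mathbb{Q}G) \cong \bigl(K_0(\mathbb{Q}QD_{32})\oplus K_0(\mathbb{Q}QD_{32})\bigr) / \mathrm{im}\, K_0(\mathbb{Q}Q_{16}) $$
and the rational character tables of $Q_{16}$ and $QD_{32}$, I would identify $\alpha \otimes \mathbb{Q}$ explicitly with a non-zero class in the quotient $\widetilde{K_0(\mathbb{Q}G)}$, giving the desired counterexample.

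\textbf{Main obstacle.} The hardest step by far is controlling the Waldhausen $\widetilde{\mathrm{Nil}}$-terms in the integral sequence: they correct the naive Mayer--Vietoris and could, a priori, either absorb the candidate class $\alpha$ or modify its boundary $\partial_{\mathbb{Z}}\alpha$. I expect to circumvent this either by invoking known vanishing of $\widetilde{\mathrm{Nil}}$ for the relevant pairs of finite $2$-groups (via Farrell--Jones-type machinery, which is available for amalgamated products of finite groups), or by working on the assembly side from the beginning and comparing $H^G_*(E_{\mathcal{FIN}}G;K\mathbb{Z})\to H^G_*(E_{\mathcal{VC}}G;K\mathbb{Z})$ with its rational analogue. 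A secondary difficulty is the explicit bookkeeping in Step 3, matching the lift $\alpha$ to a specific combination of irreducible $\mathbb{Q}$-representations of $QD_{32}$ and $Q_{16}$.
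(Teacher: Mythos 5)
Your overall strategy matches the paper's in broad outline: use the amalgamated structure $G = QD_{32} *_{Q_{16}} QD_{32}$, compute $K_{-1}\mathbb{Z}Q_{16} = \mathbb{Z}/2$ and $K_{-1}\mathbb{Z}QD_{32} = 0$, and show the resulting boundary class in $K_0\mathbb{Z}G$ survives rationalization. You also correctly identify the Nil-terms as one danger and suggest working on the assembly side over $E_{\text{Fin}}G$, which is exactly how the paper avoids Waldhausen Nil. Steps 1 and 2 are sound.

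The genuine gap is in Step 3. Your class $\alpha$ is produced abstractly by exactness: it is a lift of $\beta\in K_{-1}\mathbb{Z}Q_{16}$ along $\partial_{\mathbb{Z}}$, determined only modulo $\mathrm{im}(K_0\mathbb{Z}A \oplus K_0\mathbb{Z}B)$ (and Nil). Character tables describe $K_0\mathbb{Q}H$ for finite $H$, but they do not tell you which element of the rational Mayer--Vietoris cokernel your abstract lift lands on. "Identifying $\alpha\otimes\mathbb{Q}$ explicitly" is precisely where a concrete model of the connecting homomorphism $K_{-1}\mathbb{Z}Q_{16}\to \widetilde{K_0\mathbb{Q}G}$ is needed, and the proposal supplies none. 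Note also that "not induced from $A$ or $B$" is, as you concede, only a necessary condition; it does not yet rule out $\alpha\otimes\mathbb{Q}$ being stably free.

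What the paper uses to close this gap is the singular-character group $\text{SC}(H)$, which for finite $H$ sits in a functorial short exact sequence
$$ 0 \rightarrow \widetilde{K_0\mathbb{Q}H} \rightarrow \text{SC}(H) \rightarrow K_{-1}\mathbb{Z}H \rightarrow 0,$$
a free resolution of $K_{-1}\mathbb{Z}H$ compatible with induction maps. Feeding this into the attaching-map diagram for the $1$-skeleton of $E_{\text{Fin}}G$ and applying the snake lemma yields the exact sequence
$$ 0 \rightarrow \text{ker}^{\widetilde{K_0\mathbb{Q}}} \rightarrow \text{ker}^{\text{SC}} \rightarrow \text{ker}^{K_{-1}\mathbb{Z}} \rightarrow \text{im}\bigl(\widetilde{K_0\mathbb{Z}G}\rightarrow \widetilde{K_0\mathbb{Q}G}\bigr) \rightarrow 0,$$
which makes the connecting map computable. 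For $G = QD_{32} *_{Q_{16}} QD_{32}$ one has $\text{ker}^{K_{-1}\mathbb{Z}} = \mathbb{Z}/2$, and the remaining computation is to verify, via the $2$-local representations of $Q_{16}$ and $QD_{32}$, that $\text{ker}^{\text{SC}}\to \text{ker}^{K_{-1}\mathbb{Z}}$ is zero. This is the "explicit character work" you anticipated in Step 3, but it only becomes tractable once the correct intermediate object $\text{SC}$ and the snake-lemma identification are in place. Without them, the proposal as written cannot establish that the image in $\widetilde{K_0\mathbb{Q}G}$ is non-trivial.
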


Part of this paper will show the following.

\begin{theorem}[See Section \ref{counterexample}]
The Integral $\widetilde{ K_0 }\mathbb{Z} G$-to-$\widetilde{ K_0 }\mathbb{Q} G$-Conjecture is false. A counterexample is given by the group $G \defeq QD_{32} \ast_{Q_{16}} QD_{32}$, where $QD_{32}$ is the quasi-dihedral group of order $32$, and $Q_{16}$ is the generalized quaternion group of order $16$.
\end{theorem}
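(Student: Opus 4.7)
My plan is to compare Waldhausen's Mayer-Vietoris long exact sequence for the algebraic $K$-theory of the amalgamated free product $G = QD_{32} \ast_{Q_{16}} QD_{32}$ with integral and rational coefficients. For $R \in \{\mathbb{Z}, \mathbb{Q}\}$, Waldhausen's theorem yields (up to Nil contributions) an exact sequence
\[
K_1(RG) \xrightarrow{\partial_R} K_0(R Q_{16}) \xrightarrow{(i_*, -i_*)} K_0(R QD_{32}) \oplus K_0(R QD_{32}) \to K_0(RG) \to 0,
\]
where $i_*$ denotes induction along $Q_{16} \hookrightarrow QD_{32}$. For $R = \mathbb{Q}$ the group algebras are semisimple, hence regular, so the Nil terms vanish and the sequence is exact as written; for $R = \mathbb{Z}$ the Nil summand dies after rationalization and is therefore invisible in the image of $\widetilde{K_0 \mathbb{Z} G} \to \widetilde{K_0 \mathbb{Q} G}$. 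Comparing the two sequences via the ladder induced by $\mathbb{Z} \hookrightarrow \mathbb{Q}$ is what will localize the counterexample.

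By Swan's theorem (Theorem~\ref{swansprojectivetheorem}), the composition
\[
\widetilde{K_0 \mathbb{Z} QD_{32}} \oplus \widetilde{K_0 \mathbb{Z} QD_{32}} \to \widetilde{K_0 \mathbb{Z} G} \to \widetilde{K_0 \mathbb{Q} G}
\]
factors through the zero group $\widetilde{K_0 \mathbb{Q} QD_{32}} \oplus \widetilde{K_0 \mathbb{Q} QD_{32}}$ and so vanishes. Thus a counterexample cannot come from a pair of projective modules over the factor subgroups individually; it must arise from the amalgamation structure. A diagram chase shows that such a class in $\widetilde{K_0 \mathbb{Z} G}$ corresponds to an element $x \in K_0(\mathbb{Z} Q_{16})$ which is in the image of $\partial_\mathbb{Q}$ after rationalization, but whose integral representative does not lie in the image of $\partial_\mathbb{Z}$; its image in $K_0(\mathbb{Z} QD_{32}) \oplus K_0(\mathbb{Z} QD_{32})$ under $(i_*, -i_*)$ then descends to the desired nontrivial class.

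Concretely, I would use the Wedderburn decompositions of $\mathbb{Q} Q_{16}$ and $\mathbb{Q} QD_{32}$ to make the rational sequence entirely explicit: $\mathbb{Q} Q_{16}$ contains the rational quaternion algebra as a simple summand (the namesake of "generalized quaternion"), and by enumerating the irreducible rational representations of both groups one obtains the map $(i_*, -i_*)$ and the boundary $\partial_\mathbb{Q}$ in terms of reduced norms. The witness class in $K_1(\mathbb{Q} G)$ providing the obstruction is expected to come from the Bass-Serre action of $G$ on its tree, yielding units in matrix blocks over the quaternion summand whose boundary sits in the quaternionic factor of $K_0(\mathbb{Q} Q_{16})$. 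One then invokes the $2$-adic calculations of $K_1(\mathbb{Z} Q_{16})$ and $\widetilde{K_0 \mathbb{Z} Q_{16}}$ in the style of Oliver and Milnor to verify that the corresponding class is not hit by $\partial_\mathbb{Z}$.

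The main obstacle will be this last non-liftability verification: showing that a chosen rational class genuinely fails to have an integral preimage reduces to a $2$-adic analysis of how badly $K_1(\mathbb{Z} G) \to K_1(\mathbb{Q} G)$ fails to be surjective for this specific $G$. The particular choice of $QD_{32}$ and $Q_{16}$ is presumably forced: for smaller amalgams of $2$-groups the relevant local obstruction at $2$ vanishes, and explaining why $QD_{32} \ast_{Q_{16}} QD_{32}$ is the minimal configuration that works---while a nearby candidate would not---is what makes the counterexample both delicate and interesting. I expect this numerical verification to form the technical heart of the proof.
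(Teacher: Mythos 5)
Your proposal founders on two points, one of which is fatal and happens to be precisely where the paper's actual argument lives.

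First, the Waldhausen Mayer–Vietoris sequence you write down for $R = \mathbb{Z}$ is incorrect: it does not terminate in $K_0(\mathbb{Z}G) \to 0$. Waldhausen's fiber sequence is one of non-connective spectra, so it continues
\[
K_0(\mathbb{Z}Q_{16}) \xrightarrow{(i_*,-i_*)} K_0(\mathbb{Z}QD_{32})^{\oplus 2} \to K_0(\mathbb{Z}G)/\mathrm{Nil} \xrightarrow{\partial} K_{-1}(\mathbb{Z}Q_{16}) \to K_{-1}(\mathbb{Z}QD_{32})^{\oplus 2},
\]
and by Carter's theorem (Theorem~\ref{negativektheory}) one computes $K_{-1}(\mathbb{Z}Q_{16}) = \mathbb{Z}/2$ while $K_{-1}(\mathbb{Z}QD_{32}) = 0$. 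Thus the rightmost arrow you wrote as $\to 0$ is actually a surjection onto $\mathbb{Z}/2$. This is not a small omission: that $\mathbb{Z}/2$ is exactly where the counterexample lives. The paper's argument (via Lemma~\ref{technicallemma} and the singular-character sequence of Lemma~\ref{shortexactsequencefornegativektheory}) produces the exact sequence $0 \to \mathrm{ker}^{\widetilde{K_0\mathbb{Q}}} \to \mathrm{ker}^{\mathrm{SC}} \to \mathrm{ker}^{K_{-1}\mathbb{Z}} \to \mathrm{im}(\widetilde{K_0\mathbb{Z}G} \to \widetilde{K_0\mathbb{Q}G}) \to 0$, identifies $\mathrm{ker}^{K_{-1}\mathbb{Z}} = K_{-1}(\mathbb{Z}Q_{16}) = \mathbb{Z}/2$, and then shows $\mathrm{ker}^{\mathrm{SC}} \to \mathrm{ker}^{K_{-1}\mathbb{Z}}$ is zero by inspecting the $2$-adic irreducible representations of $Q_{16}$ and $QD_{32}$. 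The obstruction is a \emph{negative} $K$-theory phenomenon; your proposal looks at $K_1 \to K_0$ and never touches $K_{-1}$, so it cannot see it.

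Second, your proposed diagram chase is internally inconsistent even granting the wrong sequence. You want an $x \in K_0(\mathbb{Z}Q_{16})$ rationally in $\mathrm{im}(\partial_{\mathbb{Q}})$ but integrally not in $\mathrm{im}(\partial_{\mathbb{Z}})$, and then claim that $(i_*,-i_*)(x)$ ``descends to a nontrivial class'' in $\widetilde{K_0\mathbb{Z}G}$. But $(i_*,-i_*)(x)$ lies in the image of $(i_*,-i_*)$ by construction, hence maps to zero in $K_0(\mathbb{Z}G)/\mathrm{Nil}$ by exactness, regardless of whether $x$ lifts along $\partial_{\mathbb{Z}}$. So the element you build dies before it can witness anything. (Also, a smaller point: the composition in your second paragraph does not ``factor through the zero group'' — $\widetilde{K_0\mathbb{Q}QD_{32}}$ is a free abelian group of rank $6$; the vanishing follows because $\widetilde{K_0\mathbb{Z}QD_{32}} \to \widetilde{K_0\mathbb{Q}QD_{32}}$ is zero by Swan, which is what you meant.) The involvement of $K_1(\mathbb{Z}Q_{16})$, reduced norms, and Oliver–Milnor style calculations is also unnecessary: the paper's verification that $\mathrm{ker}^{\mathrm{SC}} \to \mathrm{ker}^{K_{-1}\mathbb{Z}}$ vanishes reduces to a straightforward representation-theoretic statement (that the faithful irreducible $\alpha$ of $Q_{16}$, and the unique irreducible $\mathbb{Q}_2$-constituent $\beta$ with $\alpha \otimes \mathbb{Q}_2 = 2\beta$, do not die under induction to $QD_{32}$ — checked by dimension count), plus the GAP computation of Schur indices that pins down $K_{-1}$. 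If you rewrite your argument to chase the boundary $K_0(\mathbb{Z}G)/\mathrm{Nil} \to K_{-1}(\mathbb{Z}Q_{16})$ rather than $K_1(\mathbb{Q}G) \to K_0(\mathbb{Q}Q_{16})$, you will land on the paper's mechanism.
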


The other half of this paper is an investigation into how much the map $\widetilde{ K_0 }\mathbb{Z} G \rightarrow \widetilde{ K_0 }\mathbb{Q} G$ can fail to be trivial under the assumption that $G$ satisfies the Farrell-Jones conjecture. We recommend the surveys \cite{Lueck_Reich_2005} as well as \cite{reich_varisko} for an overview on assembly conjectures, and in particular the Farrell-Jones conjecture. The Farrell-Jones conjecture states that the assembly map
$$ \text{colim}_{(G/H) \in \text{Or}G_{\text{VCyc}}} \textbf{K} \mathbb{Z}H \rightarrow \textbf{K} \mathbb{Z}G $$
is a weak equivalence of spectra. Here $\textbf{K} \mathbb{Z}G $ refers to the non-connective $K$-theory spectrum of $\mathbb{Z}G$, and the colimit in question is a homotopy colimit over the category $\text{Or}G_{\text{VCyc}}$, which is the full subcategory of the orbit category of $G$ spanned by the objects $G/H$ with virtually cyclic isotropy group $H$. The Farrell-Jones conjecture has been shown to be true for a wide class of groups by the work of Bartels, Lück, Reich \cite{Bartels_2007}, Bartels, Bestvina \cite{Bartels2019} Kammeyer, Lück,  Rüping \cite{Kammeyer_2016}, and Wegner \cite{Wegner_2015} among many others.

Now let $E \text{Fin}$ be a fixed model for the classifying space of finite subgroups together with a chosen CW-structure $(E \text{Fin}^{(k)})_{k \in \mathbb{N}}$. Write
$$ (f,g)  \colon \coprod_{i \in I} G/H_i \times S^0 \rightarrow \coprod_{j \in J} G/K_j $$
for the degree $0$ attaching map of $E\text{Fin}$ with the $H_i$ and $K_j$ being finite subgroups of $G$.
For a functor $F  \colon \text{Or}G \rightarrow \text{Ab}$ define
$$ \text{ker}^F \defeq \text{ker}( F(f)-F(g) )  \colon \bigoplus_{i \in I} F(G/H_i) \rightarrow \bigoplus_{j \in J} F(G/K_j). $$

\begin{theorem} 
Suppose $G$ satisfies the Farrell-Jones conjecture. There is an exact sequence
$$ 0 \rightarrow \normalfont{\text{ker}}^{\widetilde{K_0 }\mathbb Q}  \rightarrow \normalfont{\text{ker}}^{\text{SC}} \rightarrow \normalfont{\text{ker}}^{K_{-1} \mathbb{Z}} \rightarrow \normalfont{\text{im}}( \widetilde{K_0 }\mathbb{Z}G \rightarrow \widetilde{K_0 }\mathbb{Q}G ) \rightarrow 0. $$
\end{theorem}

This gives a certain limitation on the map $ \widetilde{ K_0 }\mathbb{Z} G  \rightarrow \widetilde{ K_0 }\mathbb{Q} G$. The terms $\text{ker}^{\widetilde{K_0 }\mathbb Q}, \text{ker}^{\text{SC}}$ and $\text{ker}^{K_{-1} \mathbb{Z}}$ are computable by representation theoretic techniques and this is what allowed the computation of the above counterexample. The groups $\text{ker}^{\widetilde{K_0 }\mathbb Q}$ and $\text{ker}^{\text{SC}}$ are always free, and $\text{ker}^{K_{-1} \mathbb{Z}}$ is free $p$-locally away from the prime $2$. We will define the functor $\text{SC}$ in Section \ref{whiteheadspectrum} and give a characterization in terms of $p$-adic characters for finite groups in Section \ref{singularcharactergroup}.

One could still ask if the image of the map $\widetilde{K_0 }\mathbb{Z}G \rightarrow \widetilde{K_0 }\mathbb{Q}G$ is in fact only $2$-torsion. The author at present believes this to be false. Examples of groups with odd torsion are however more challenging to construct and will be saved for a later publication.

\subsection{Acknowledgements}

This paper is part of the results of my thesis. I want to thank my supervisor, Holger Reich, as well as my colleagues Elmar Vogt, Vincent Boelens, Gabriel Angelini-Knoll and Alexander Müller. I want to thank Maxime Ramzi for pointing out the connection to the Bass conjecture to me, as well as Johnny Nicholson for his input on groups failing the Eichler condition. This work was funded by the German Research Foundation through the Graduate School ``Berlin Mathematical School''.

\section{Preliminaries}

Throughout, we will denote the \emph{non-connective algebraic} $K$-\emph{theory spectrum} of a non-commutative unital ring $R$ by $\textbf{K}R$, see e.g. \cite{Weibel2013TheKA}, Chapter IV. Its homotopy groups $K_n R \defeq \pi_n \textbf{K} R$ are the algebraic $K$-theory groups of $R$.

Sections \ref{ghomology} to \ref{whiteheadspectrum} are used to phrase and setup the Farrell-Jones conjecture and discuss how to deal with functors on the orbit category $\text{Or}G$. Section \ref{lowerktheory} is concerned with primarily classical results about lower $K$-theory groups of finite groups $G$. The proofs of the main theorems will be found in section \ref{infinitegroups}. The claimed counterexample to the integral $K_0 \mathbb{Z}G$-to-$K_0 \mathbb{Q}G$ conjecture is discussed in Section \ref{counterexample}.

We will use the language of $\infty$-categories in our proofs. The author remarks that this choice is due to convenience, not necessity. The reader not familiar with the topic shall be assured that all arguments can be phrased using the notions of model categories and $t$-structures on a triangulated category, only that many formal arguments become harder to formulate (e.g. the existence of the object-wise $t$-structure on a functor category or exactness of many functors involved).  A model for the notion of $\infty$-categories is given by the notion of quasi-categories developed by Joyal and Lurie, which are simplicial sets fulfilling a certain lifting property. The standard reference is \cite{luriehtt}. We further included some results used about $t$-structures on stable $\infty$-categories in Appendix \ref{tstructures}. We want to remark that most of our results will be phrased in a model independent way, treating the notion of $\infty$-categories as a black box. The terms \emph{limit} and \emph{colimit} will always be interpreted in an $\infty$-categorical way. In situations where our $\infty$-category $\mathcal{C}$ arises from a model category $\mathcal{M}$, limits and colimits in $\mathcal{C}$ are modelled by homotopy limits and homotopy colimits in $\mathcal{M}$. If $C$ is a $1$-category, then the nerve $N(C)$ is an $\infty$-category in which limits model ordinary $1$-categorical limits and similarly for colimits. We will often omit the notation for the nerve and just refer to the $\infty$-category $N(C)$ simply as $C$ when the context is clear. Given two $\infty$-categories $\mathcal{C}$ and $\mathcal{D}$, there is the $\infty$-category of functors $\text{Fun}(\mathcal{C}, \mathcal{D})$ from $\mathcal{C}$ to $\mathcal{D}$. Functors $A  \colon  \mathcal{C} \rightarrow \mathcal{D}$ are sometimes written as $A(-)$ to highlight that the value of $A$ is dependent on the input. Natural transformations between functors will be depicted with a double arrow, like $A \implies B$.

The $\infty$-category of \emph{spaces}, sometimes also referred to as \emph{homotopy types} or \emph{anima}, will be denoted as Spc and is characterized via a universal property as the free cocomplete $\infty$-category generated by a single object (the point) similar to how the category of sets is generated under coproducts by a single object (the set with a single element). It is modelled, for example, by the simplicial category of Kan complexes or the relative category of CW-complexes and weak equivalences being homotopy equivalences. The undercategory $\text{Spc}_{\text{pt}/}$ is called the $\infty$-category of \emph{pointed spaces} and will be denoted as $\text{Spc}_{*}$. We have a natural functor $(-)_+  \colon  \text{Spc} \rightarrow \text{Spc}_*$ that adds a disjoint basepoint. The $\infty$-category of spectra will be denoted as Sp and is characterized again via a universal property as the stabilization of Spc, or equivalently, as the free cocomplete stable $\infty$-category generated by a single object (the sphere spectrum $\mathbb{S}$). It is modelled, for example, by the relative category of $\Omega$-spectra and weak equivalences being maps that induce isomorphisms on all homotopy groups. Spectra will be denoted by bold-faced letters, e.g. $\textbf{A}, \textbf{K}R$ or $\textbf{Wh}(R;G)$. Since Sp is stable, the suspension $\textbf{A} \mapsto \Sigma \textbf{A}$ defined as the pushout
$$\xymatrix{
\textbf{A} \ar[r] \ar[d] & 0 \ar[d] \\
0 \ar[r] & \Sigma \textbf{A}
}$$
produces an auto-equivalence of Sp with itself. The suspension functor $\text{Spc}_* \rightarrow \text{Sp}$ will be denoted as $\Sigma^\infty$, the composition $\Sigma^\infty\circ (-)_+$ will be denoted as $\Sigma^\infty_+$. The homotopy groups of a spectrum $\textbf{A}$ are denoted as  $\pi_n(\textbf{A})$. Further, denote the $1$-category of abelian groups by Ab. Taking homotopy groups yields functors $\pi_n  \colon  \text{Sp} \rightarrow \text{Ab}$.  Spectra $\textbf{A}$ with the property that $\pi_n \textbf{A} = 0$ for $n < 0$ will be called connective, and spectra $\textbf{A}$ with the property that $\pi_n \textbf{A} = 0$ for $n > 0$ will be called coconnective. The functor $\pi_0$ becomes an equivalence when restricted to the intersection of the full subcategories of connective and coconnective spectra (essentially a consequence of the Brown representability theorem). Its inverse will be denoted by $\textbf{H}$, or the \emph{Eilenberg-Maclane functor}. The inclusion of the full subcategory of connective spectra into Sp admits a right adjoint which will be called $\tau_{\geq 0}$, and we define for any $a \in \mathbb{Z}$ the functor $ \tau_{\geq a}$ as $\Sigma^a \tau_{\geq 0} \Sigma^{-a}$. Similarly, the inclusion of the full subcategory of coconnective spectra into Sp admits a left adjoint which will be called $\tau_{\leq 0}$, and $ \tau_{\leq a}$ is defined as $\Sigma^a \tau_{\leq 0} \Sigma^{-a}$ in the same way. For $a,b \in \mathbb{Z}$ the compositions $\tau_{\geq a} \tau_{\leq b}$ and $\tau_{\leq b} \tau_{\geq a}$ are naturally isomorphic and will denoted as $\textbf{A} \mapsto \textbf{A}[a,b]$. This type of structure defines a $t$-structure on Sp, more on this in Appendix \ref{tstructures}.

For a fixed $\infty$-category $\mathcal{C}$, for two given objects $c,c' \in \mathcal{C}$ the mapping space from $c$ to $c'$ will be denoted $\text{Map}_\mathcal{C}(c,c')$. The subscript is omitted in the case of $\mathcal{C}$ being the $\infty$-category of spaces. $ \text{Map}_\mathcal{C}$ is a bi-functor into the category Spc, contravariant in the left and covariant in the right variable. We also use the notation $[ c, c' ] \defeq \pi_0 \text{Map}_\mathcal{C}(c,c').$ Note that $[ c, c' ]$ is just the Hom-set of the $1$-category given by the homotopy category of $\mathcal{C}$. If $\mathcal{C}$ is moreover a stable $\infty$-category, the space $\text{Map}_\mathcal{C}(c,c')$ is naturally the zero-th space of a spectrum $\textbf{map}_\mathcal{C}(c,c')$, which is called the mapping spectrum from $c$ to $c'$. Again $\textbf{map}_\mathcal{C}$ is naturally a functor in both variables. We, similarly, omit the subscript in the case of $\mathcal{C}$ being the $\infty$-category of spectra. Since 
$$ [c, c'] = \pi_0 \text{Map}_\mathcal{C}(c,c') \cong \pi_0 \textbf{map}_\mathcal{C}(c,c') $$
is the zero-th homotopy group of a spectrum, the set $[c, c']$ comes naturally with the structure of an abelian group. The mapping space for two functors $F,G  \colon  \mathcal{D} \rightarrow \mathcal{C}$ in the functor category will also be denoted as
$ \text{Nat}_\mathcal{C} ( F , G )$ and is called the \emph{space of natural transformations} from $F$ to $G$. If $\mathcal{C}$ is stable, the corresponding mapping spectrum is also written as $ \textbf{nat}_\mathcal{C} ( F , G )$.

If $G$ is a group, the category $\mathcal{M}$ of topological spaces with left $G$-action admits the structure of a closed simplicial model category \cite{DWYER1984147}. The associated $\infty$-category is the $\infty$-category of $G$-spaces, $G\text{-Spc}$. Similarly, the category of pointed $G$-spaces, $G\text{-Spc}_*$ is defined as the over category $G\text{-Spc}_{\text{pt}/}$ where $\text{pt}$ is the one-point space with trivial $G$-action. Let $H$ be a subgroup of $G$. We can think of the left $G$-set $G/H$ as a discrete $G$-space. We can also interpret the $n$-sphere $S^n$ as well as the $n$-disc $D^n$ as $G$-spaces by equipping them with the trivial $G$-action. If $X$ is an object of $\mathcal{M}$, i.e. a topological space with continuous left $G$-action, a $G$-CW-structure on $X$ refers to a sequence of $G$-spaces $(X^{(k)})_{k \geq 0}$, maps $\iota_k : X^{(k)} \rightarrow X^{(k+1)}$ such that there exist pushout squares in the category $\mathcal{M}$,
$$\begin{tikzcd}
\coprod_{i \in I_k} G/H_i \times S^n \arrow[r, "{\phi_k}"] \ar[d] &  X^{(k)} \arrow[d, "{\iota_k}"] \\
\coprod_{i \in I_k} G/H_i \times D^{n+1} \ar[r] &  X^{(k+1)},
\end{tikzcd}$$
and an equivariant homeomorphism $X \cong \text{colim}_{k} X^{(k)}$, where the colimit in question is over the tower given by the maps $\iota_k  \colon  X^{(k)} \rightarrow X^{(k+1)}$. The maps 
$$\phi_k  \colon  \coprod_{i \in I_k} G/H_i \times S^n \rightarrow  X^{(k)}$$
are called \emph{attaching maps}. The indexing sets $I_k$ can be arbitrary sets. The space $X^{(k)}$ is also called the $k$-\emph{skeleton} of $X$. We also refer to $X$ together with a fixed choice of $G$-CW-structure as a $G$-CW-complex. By \cite{DWYER1984147}, Theorem 2.2, every $G$-CW-complex is a cofibrant object in $\mathcal{M}$. Moreover, every object of the $\infty$-category $G$-Spc can be represented by a $G$-CW-complex. The maps $G/H_i \times S^n \rightarrow  G/H_i \times D^{n+1}$ are cofibrations in the model category $\mathcal{M}$. This means that for a given $G$-CW-complex $X$, the squares
$$\begin{tikzcd}
\coprod_{i \in I_k} G/H_i \times S^n \arrow[r, "{\phi_k}"] \ar[d] &  X^{(k)} \arrow[d, "{\iota_k}"] \\
\coprod_{i \in I_k} G/H_i \times D^{n+1} \ar[r] &  X^{(k+1)},
\end{tikzcd}$$
are also pushout squares in the $\infty$-category $G$-Spc. If $X$ is an object of $G$-Spc we will define a $G$-CW-structure on $X$ to be a $G$-CW-structure on any representing object of $X$ in $\mathcal{M}$. We will also refer to objects of $G$-Spc from now on as $G$-spaces.

\section{$G$-homology theories and functors on the orbit category} \label{ghomology}

Define the \emph{orbit category} $\text{Or}G$ as the full subcategory of the 1-category of $G$-sets spanned by the $G$-sets with transitive action. Equivalently, each object $S$ of $\text{Or}G$ is $G$-equivariantly isomorphic to a set of left cosets $G/H$, acted on by the left, where $H$ is the isotropy group of a chosen element $s$ of $S$. It is an elementary fact that each map in $\text{Or}G$ can be decomposed into a composition of maps given by inclusions $\iota : G/H \rightarrow G/H', kH \mapsto kH'$ for $H \subset H'$ and conjugations $c_g : G/H \rightarrow G/(g^{-1}Hg), kH \mapsto kHg = (kg)(g^{-1}Hg)$. If we have a $G$-space $X$, we can think of the assignment $G/H \mapsto X^H$ as a functor
$$ X^-  \colon  \text{Or}G^{op} \rightarrow \text{Spc}.$$
Elmendorf's Theorem states that mapping $X$ to $X^-$ produces an equivalence of $\infty$-categories $G \text{-Spc} \simeq \text{Fun}(\text{Or}G^{op}, \text{Spc})$, see e.g. \cite{DWYER1984147} Theorem 3.1. Note that it further refines to an equivalence $G \text{-Spc}_* \simeq \text{Fun}(\text{Or}G^{op}, \text{Spc}_*)$ for pointed $G$-spaces. Recall that the $\infty$-category of presheaves has the following universal property.
\begin{theorem}[\cite{luriehtt}, Theorem 5.1.5.6]
Let $S$ be a small $\infty$-category, and $\mathcal{C}$ be an $\infty$-category admitting all small colimits. Then the Yoneda embedding $y : S \rightarrow \mathrm{Fun}(S^{\mathrm{op}}, \mathrm{Spc})$ induces an equivalence of categories
$$\mathrm{Fun}^L( \mathrm{Fun}(S^{\mathrm{op}}, \mathrm{Spc}), \mathcal{C}) \simeq \mathrm{Fun}(S, \mathcal{C}),$$
where $\mathrm{Fun}^L( \mathrm{Fun}(S^{\mathrm{op}}, \mathrm{Spc}))$ is the subcategory of $\mathrm{Fun}( \mathrm{Fun}(S^{\mathrm{op}}, \mathrm{Spc}))$ spanned by the colimit preserving functors. The inverse is given by left Kan extending a functor defined on $S$ along the Yoneda embedding.
\end{theorem}

\begin{definition}[Orbit tensor product]
Let $\textbf{A}$ be a functor $\text{Or}G \rightarrow \text{Sp}$. Denote by
$$ - \otimes_{{\text{Or}G}} \textbf{A} : G \text{-Spc} \rightarrow \text{Sp}$$
the left Kan extension of $\textbf{A}$, provided by the previous theorem. We write
$$ X \otimes_{{\text{Or}G}} \textbf{A}$$
for its value on a $G$-space $X$, and call it the \emph{orbit tensor product}.
In the special case of $G = \{1\}$ being the trivial group, we simply write
$$ X \otimes \textbf{A} \in \text{Sp} $$
for $X \in \text{Spc}$ and $\textbf{A} \in \text{Sp}$ and obtain the \emph{tensoring} of spectra with spaces. We note that
$$ X \otimes \textbf{A} \simeq \Sigma^\infty_+ X \otimes \textbf{A},$$
where the tensor product on the right refers to the smash product of spectra.
\end{definition}

If $H \subset G$ a subgroup, the functor $\mathrm{Ind}_H^G : \text{Or}H \rightarrow \text{Or}G$ induces a colimit preserving functor
$$ \mathrm{Ind}_H^G : H \text{-Spc} \rightarrow G \text{-Spc},$$
by extending the composite $\text{Or}H \rightarrow \text{Or}G \xrightarrow{y} G \text{-Spc}$. In this context the projection formula holds.

\begin{proposition}[Projection formula] \label{projectionformula} Let $H \subset G$ a subgroup, $X \in H \text{-Spc}$, $\mathbf{A} \in \text{Or}G \rightarrow \text{Sp}$. Then there is a natural equivalence
$$ (\mathrm{Ind}_H^G X) \otimes_G \mathbf{A} \simeq X \otimes (A \circ \mathrm{Ind}_H^G). $$
\end{proposition}

\begin{proof}
Since $(\mathrm{Ind}_H^G -) \otimes_G \mathbf{A}$ is by construction colimit preserving, it suffices to verify the equivalence on representables. On representables the equivalence is true by definition.
\end{proof}

It is straightforward to see that the functors $ A_* \defeq \pi_* ( - \otimes_{{\text{Or}G}} \textbf{A} )$ define a $G$-equivariant homology theory on $G$-CW-complexes in the sense of Lück \cite{lueck2019assembly}, Definition 2.1. Moreover, we can equip $\text{Fun}(\text{Or}G, \text{Sp})$ with the object-wise $t$-structure defined in Section \ref{objectwisetstructure}. Since connective objects are closed under colimits, we get that if $\textbf{A}$ is object-wise connective, then $X \otimes_{{\text{Or}G}} \textbf{A}$ is connective and further, if $X$ is $m$-connected, then so is $X \otimes_{{\text{Or}G}} \textbf{A}$. As a special case we deduce the following useful lemma.

\begin{lemma} \label{homologypreservesconnectivity}
Let $X$ be a $G$-CW-complex and $\textbf{A}$ a functor $\text{Or}G \rightarrow \text{Sp}_{\geq 0}$ with values in connective spectra. Denote by $X^{(k)}$ the $k$-skeleton of $X$. Then the homomorphism
$ \pi_n ( X^{(k)} \otimes_{{\text{Or}G}} \textbf{A} ) \rightarrow \pi_n ( X \otimes_{{\text{Or}G}} \textbf{A}) $
is an isomorphism for $n < k$ and an epimorphism for $n = k$.
\end{lemma}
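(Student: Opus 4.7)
The plan is to reduce the claim to the connectivity preservation property of the orbit smash product discussed just before the lemma, by analysing the cofiber sequence associated to the inclusion of the $k$-skeleton $X^{(k)} \hookrightarrow X$.

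First, I would observe that the pointed quotient $X/X^{(k)}$ is itself a pointed $G$-CW-complex whose $k$-skeleton is the basepoint, since by construction all its cells have dimension at least $k+1$. Applying cellular approximation on its underlying space, this implies $\pi_n(X/X^{(k)}) = 0$ for $n \leq k$, i.e.\ $X/X^{(k)}$ is $k$-connected as a pointed space.

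Next, since each inclusion of skeleta is a cofibration of $G$-spaces, the cofiber of $X^{(k)} \hookrightarrow X$ in the $\infty$-category of pointed $G$-spaces is modelled by $X/X^{(k)}$. The functor $- \wedge_{\text{Or}G} \textbf{A}$ is a colimit-preserving functor from pointed $G$-spaces to spectra (being a coend composed with $\Sigma^\infty$), and therefore sends this to a cofiber sequence
$$ X^{(k)} \wedge_{\text{Or}G} \textbf{A} \longrightarrow X \wedge_{\text{Or}G} \textbf{A} \longrightarrow (X/X^{(k)}) \wedge_{\text{Or}G} \textbf{A} $$
of spectra. By the preservation of connectivity discussed immediately before the lemma (applied to the $k$-connected pointed $G$-space $X/X^{(k)}$ and the object-wise connective functor $\textbf{A}$), the spectrum $(X/X^{(k)}) \wedge_{\text{Or}G} \textbf{A}$ is $k$-connected, so $\pi_n((X/X^{(k)}) \wedge_{\text{Or}G} \textbf{A}) = 0$ for $n \leq k$.

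The long exact sequence in homotopy groups of this cofiber sequence,
$$ \pi_{n+1}((X/X^{(k)}) \wedge_{\text{Or}G} \textbf{A}) \to \pi_n(X^{(k)} \wedge_{\text{Or}G} \textbf{A}) \to \pi_n(X \wedge_{\text{Or}G} \textbf{A}) \to \pi_n((X/X^{(k)}) \wedge_{\text{Or}G} \textbf{A}), $$
then yields the statement at once: for $n < k$ the two outer groups vanish, so the middle map is an isomorphism, and for $n = k$ only the rightmost group vanishes, so the middle map is surjective. I do not expect any real obstacle here; the argument is essentially bookkeeping, with the only point to check being that $X/X^{(k)}$ really has all its cells in dimensions $> k$, which is immediate from the $G$-CW-structure of $X$.
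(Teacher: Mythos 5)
Your proof is correct and works, but it takes a shorter route than the paper's argument. The paper's proof proceeds one skeleton at a time: it analyses the cofiber sequence $X^{(k)} \to X^{(k+1)} \to \bigvee_i (G/H_i)_+ \wedge S^{k+1}$, applies the connectivity preservation lemma to the wedge of suspended orbits, and then argues by induction over skeleta to handle finite-dimensional $X$; the general case is then obtained by a separate argument passing to the colimit $X = \mathrm{colim}_k X^{(k)}$ and using that $-\wedge_{\text{Or}G}\textbf{A}$ preserves colimits. You instead work with the cofiber sequence $X^{(k)} \to X \to X/X^{(k)}$ directly, using that $X/X^{(k)}$ is a $G$-CW-complex with all cells in dimension $\geq k+1$ and is therefore $k$-connected, so that the connectivity preservation of $-\wedge_{\text{Or}G}\textbf{A}$ and a single long exact sequence give the result at once. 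This is a cleaner packaging: you avoid both the induction and the separate colimit argument. One small point worth making explicit (though your reasoning does justify it) is that the relevant notion of $k$-connectedness is equivariant, i.e.\ via Elmendorf's theorem one needs every fixed-point space $(X/X^{(k)})^H$ to be $k$-connected; this is indeed the case because $(G/K)^H$ is discrete, so $(X/X^{(k)})^H$ inherits a CW-structure with all cells in dimension $\geq k+1$. Your phrase ``applying cellular approximation on its underlying space'' slightly obscures this; the argument is really about the whole functor $(X/X^{(k)})^-$ on the orbit category, not just the underlying space.
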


\begin{proof} Consider first the inclusion $ X^{(k)} \rightarrow X^{(k+1)}$ of skeleta for some $k$. This map is given by a pushout square
$$\xymatrix{
\coprod_{i \in I} G/H_i \times S^k \ar[r]^(.6){(f,g)} \ar[d] & X^{(k)} \ar[d] \\
\coprod_{i \in I} G/H_i \times D^{k+1} \ar[r]   &  X^{(k+1)}.
}$$
Applying the colimit preserving functor $- \otimes_{{\text{Or}G}} \textbf{A}$ we obtain the pushout square of spectra
$$\xymatrix{
(\bigvee_{i \in I} \textbf{A}(G/H_i)) \otimes (\mathbb{S}^k \oplus \mathbb{S}^0) \ar[r]^(.6){(f,g)} \ar[d] & X^{(k)} \otimes_{{\text{Or}G}} \textbf{A} \ar[d] \\
\bigvee_{i \in I} \textbf{A}(G/H_i) \ar[r]   &  X^{(k+1)} \otimes_{{\text{Or}G}} \textbf{A}.
}$$
Since the vertical cofibers agree, we get the cofiber sequence
$$  X^{(k)} \wedge_{\text{Or}G} \textbf{A} \rightarrow X^{(k+1)} \wedge_{\text{Or}G} \textbf{A} \rightarrow \bigvee_{i \in I}  \textbf{A}(G/H_i) \otimes \mathbb{S}^{k+1}. $$
Since $\textbf{A}$ is object-wise connective, we get for $n < k+1$
$$ \pi_n( \textbf{A}(G/H_i) \otimes \mathbb{S}^{k+1} ) = \pi_{n-k-1}( \textbf{A}(G/H_i) ) = 0.$$
The claimed statements now follow by induction for $X$ being finite dimensional $G$-CW from the long exact sequence in homotopy groups of the above fiber sequence. For general $X$ we have $X = \text{colim}_k X^{(k)}$. The functor $ - \otimes_{{\text{Or}G}} \textbf{A}$ preserves colimits, thus $ \pi_n ( X \otimes_{{\text{Or}G}} \textbf{A} ) \cong \pi_n (  \text{colim}_k ( X^{(k)} \otimes_{{\text{Or}G}} \textbf{A} ) ) \cong \pi_n (   X^{(k)} \otimes_{{\text{Or}G}} \textbf{A}  )$ for $k>n$, thus reducing the lemma to the finite case.
\end{proof}

\begin{lemma} \label{1dimensionalclassifyingspace}
Suppose $\textbf{A}$ is a functor $\text{Or}G \rightarrow \text{Sp}$ and $X$ a $G$-CW-complex that admits a $1$-dimensional model of the form
$$\xymatrix{
\coprod_{i \in I} G/H_i \times S^0 \ar[r]^{(f,g)} \ar[d] & \coprod_{j \in J} G/K_i \ar[d] \\
\coprod_{i \in I} G/H_i \times D^1 \ar[r]   &  X
}$$
Then there is a fiber sequence
$$ \bigvee_{i \in I} \textbf{A}(G/H_i) \xrightarrow{f-g} \bigvee_{j \in J} \textbf{A}(G/K_j) \rightarrow X \otimes_{\text{Or}G} \textbf{A}. $$
\end{lemma}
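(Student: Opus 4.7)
The plan is to apply the colimit-preserving functor $(-)_+ \wedge_{\text{Or}G} \textbf{A}$ to the pushout square defining $X$. Both $(-)_+$ (a left adjoint to the basepoint-forgetting functor) and $- \wedge_{\text{Or}G} \textbf{A}$ (a coend, hence itself a colimit) preserve pushouts, so I obtain a pushout square of spectra with upper-left $(\coprod_i G/H_i \times S^0)_+ \wedge_{\text{Or}G} \textbf{A}$, upper-right $(\coprod_j G/K_j)_+ \wedge_{\text{Or}G} \textbf{A}$, lower-left $(\coprod_i G/H_i \times D^1)_+ \wedge_{\text{Or}G} \textbf{A}$, and lower-right $X_+ \wedge_{\text{Or}G} \textbf{A}$.

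Next I identify these four corners. By the co-Yoneda lemma for coends (using $\text{Map}_{\text{Or}G}(G/K, G/H) = (G/H)^K$) the orbit smash product satisfies $(G/H)_+ \wedge_{\text{Or}G} \textbf{A} \simeq \textbf{A}(H)$. Since the $G$-action on $D^1$ is trivial and $D^1$ is contractible, $G/H \times D^1 \simeq G/H$ as $G$-spaces, so the lower-left corner becomes $\bigvee_i \textbf{A}(H_i)$. Since $S^0 \cong \text{pt} \sqcup \text{pt}$, the upper-left corner becomes $\bigvee_i \textbf{A}(H_i)^{\vee 2}$, and the upper-right is $\bigvee_j \textbf{A}(K_j)$. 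Under these identifications, the left vertical is the fold map $\nabla$ (both endpoints of each $1$-cell collapse to the same copy of $\textbf{A}(H_i)$ when $D^1$ contracts), while the top horizontal is $(f_*, g_*)$.

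The final step, and the one I expect to be the main obstacle, is to recognize that in the stable $\infty$-category $\text{Sp}$ a pushout of the form $A \xleftarrow{\nabla} A^{\oplus 2} \xrightarrow{(f,g)} C$ is canonically equivalent to the cofiber of $f - g \colon A \to C$. I would prove this by writing the pushout as $\text{cof}((\nabla, -(f,g)) \colon A^{\oplus 2} \to A \oplus C)$ and performing the linear change of basis $(a,b) \mapsto (a+b, b)$ on the source together with $(x,y) \mapsto (x, y + f(x))$ on the target, which converts the map to $\text{id}_A \oplus (f-g)$, whose cofiber is $\text{cof}(f-g)$. Applied to our situation, this identifies $X_+ \wedge_{\text{Or}G} \textbf{A}$ with $\text{cof}(f_* - g_* \colon \bigvee_i \textbf{A}(H_i) \to \bigvee_j \textbf{A}(K_j))$, which in $\text{Sp}$ is the claimed fiber sequence. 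Everything else is a formal consequence of colimit preservation.
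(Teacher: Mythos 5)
Your proof is correct and follows the same route as the paper: apply the colimit-preserving functor $(-)_+ \wedge_{\text{Or}G} \textbf{A}$ to the pushout square, identify the four corners via the co-Yoneda lemma, and convert the resulting pushout of spectra into the claimed cofiber sequence. Your explicit change-of-basis argument is exactly what the paper compresses into the phrase ``elementary row and column reduction'' applied to the matrix $\bigl(\begin{smallmatrix}\mathrm{id} & \mathrm{id}\\ -f & -g\end{smallmatrix}\bigr)$.
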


\begin{proof}
Analogous to the proof of the previous lemma, the pushout square 
$$\xymatrix{
\coprod_{i \in I} G/H_i \times S^0 \ar[r]^{(f,g)} \ar[d] & \coprod_{j \in J} G/K_i \ar[d] \\
\coprod_{i \in I} G/H_i \times D^1 \ar[r]   &  X
}$$
produces the pushout square
$$\xymatrix{
\bigvee_{i\in I} \textbf{A}(G/H_i) \vee \textbf{A}(G/H_i) \ar[r]^{~~(f, g)} \ar[d]^{(id, id)} & \bigvee_{j\in J} \textbf{A}(G/K_j) \ar[d] \\
\bigvee_{i\in I} \textbf{A}(G/H_i) \ar[r] & X \otimes_{\text{Or}G} \textbf{A}.
}$$
of spectra. This is equivalent to the fiber sequence
$$ \bigvee_{i\in I} \textbf{A}(G/H_i) \vee \textbf{A}(G/H_i) \xrightarrow{\big(\begin{smallmatrix}
  \text{id} & \text{id}\\
  -f & - g
\end{smallmatrix}\big)}  \bigvee_{i\in I} \textbf{A}(G/H_i) \vee \bigvee_{j\in J} \textbf{A}(G/K_j) \rightarrow X \otimes_{\text{Or}G} \textbf{A}.$$
Elementary row and column reduction now yields the desired fiber sequence.
\end{proof}

\section{Assembly and the Farrell-Jones conjecture}

\begin{definition}
Let $G$ be a group. A \emph{family of subgroups} is a set of subgroups $\mathcal{F}$ that is closed under subgroups and conjugation.
\end{definition}

\begin{example}The following four examples of families will be relevant.
\begin{itemize}
\item The trivial family $\text{Triv}$ consisting of only the subgroup $\set{1}$.
\item The family $\text{All}$ consisting of all subgroups.
\item The family Fin consisting of all finite subgroups.
\item The family VCyc consisting of all virtually cyclic subgroups. A group $H$ is virtually cyclic if it contains a cyclic subgroup of finite index.
\end{itemize}
\end{example}

If $\textbf{A}$ is a functor $\text{Or}G$ to Sp, then
$$\text{colim}_{\text{Or}G} \textbf{A} \simeq \textbf{A}(G/G)$$
since $G/G \cong \text{pt}$ is a terminal object in the category $\text{Or}G$. The property of $\textbf{A}$ satisfying assembly states that this still holds true when the domain, over which the colimit is taken, is suitably restricted.

\begin{definition}
Let $\mathcal{F}$ be a family of subgroups for the group $G$ and $\textbf{A}$ a functor $\text{Or}G$ to Sp. Denote by $\text{Or}G_\mathcal{F}$ the full subcategory of $\text{Or}G$ spanned by the objects $G/H$ with $H \in \mathcal{F}$. Then the inclusion $\text{Or}G_\mathcal{F} \subset \text{Or}G$ induces a natural map
$$ \text{colim}_{\text{Or}G_\mathcal{F}} \textbf{A} \rightarrow \text{colim}_{\text{Or}G} \textbf{A} \simeq \textbf{A}(G/G). $$
We say $\textbf{A}$ \emph{satisfies assembly for} $\mathcal{F}$ if this map is an equivalence.
\end{definition}

\begin{lemma} \label{twooutofthreeassembly}
Assume we have a fiber sequence of functors $\text{Or}G \rightarrow$ Sp,
$$ \textbf{A} \implies \textbf{B} \implies \textbf{C}. $$
If any two of them satisfy assembly for a family $\mathcal{F}$, then so does the third.
\end{lemma}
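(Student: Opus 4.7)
The argument is formal and uses only the stability of Sp. The plan is to apply the two colimit constructions appearing in the definition of assembly to the given pointwise fiber sequence $\mathbf{A} \Rightarrow \mathbf{B} \Rightarrow \mathbf{C}$, obtain a commuting ladder of two fiber sequences in Sp whose vertical arrows are the three assembly maps, and then invoke the two-out-of-three property for maps of fiber sequences in a stable $\infty$-category.

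First I would observe that both $\text{colim}_{\text{Or}G_\mathcal{F}}$ and $\text{colim}_{\text{Or}G}$, viewed as functors $\text{Fun}(\text{Or}G, \text{Sp}) \to \text{Sp}$ (the former with an implicit restriction along $\text{Or}G_\mathcal{F} \subset \text{Or}G$), are colimits landing in the stable $\infty$-category Sp, hence are exact. Moreover $\text{colim}_{\text{Or}G}\mathbf{A} \simeq \mathbf{A}(G/G)$ by the terminality of $G/G$ in $\text{Or}G$, so the assembly target is literally the value of the second colimit functor on $\mathbf{A}$. Applied to the pointwise fiber sequence $\mathbf{A} \Rightarrow \mathbf{B} \Rightarrow \mathbf{C}$ (which, being defined pointwise in the stable target, is a fiber sequence in the functor category $\text{Fun}(\text{Or}G, \text{Sp})$ equipped with its pointwise $t$-structure), these two exact functors produce two fiber sequences in Sp, and naturality of the inclusion $\text{Or}G_\mathcal{F} \hookrightarrow \text{Or}G$ assembles them into a commuting square of fiber sequences in which the three vertical maps are precisely the assembly maps of $\mathbf{A}$, $\mathbf{B}$, and $\mathbf{C}$.

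To finish, I would invoke the standard fact that in any stable $\infty$-category, given a map between two fiber sequences in which any two of the three induced maps on terms are equivalences, the third is necessarily an equivalence as well. This can be seen either by passing to homotopy groups and applying the five lemma to the resulting map of long exact sequences, or directly from the observation that the fiber of a morphism of fiber sequences is again a fiber sequence. Applied to our ladder, this yields the lemma. There is no real obstacle here; the only point one has to be slightly careful about is that ``fiber sequence of functors'' is interpreted pointwise (equivalently, in the object-wise $t$-structure referenced in the preceding section), so that applying an exact functor levelwise preserves the structure.
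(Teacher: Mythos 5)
Your proposal is correct and follows essentially the same argument as the paper's: apply the two exact colimit functors $\text{colim}_{\text{Or}G_\mathcal{F}}$ and $\text{colim}_{\text{Or}G}$ to the fiber sequence of functors, obtain the commuting ladder of fiber sequences whose vertical maps are the three assembly maps, and conclude by the five lemma on long exact sequences of homotopy groups. The only cosmetic difference is that you additionally justify the exactness of the colimit functors (colimit-preserving functors between stable $\infty$-categories are exact) and mention the alternative fiber-of-fiber-sequences argument, both of which the paper leaves implicit.
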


\begin{proof}
Both $\text{colim}_{\text{Or}G_\mathcal{F}}$ as well as $\text{colim}_{\text{Or}G}$ are exact functors from the $\infty$-category $\text{Fun}( \text{Or}G, \text{Sp})$ to Sp giving the diagram
$$\xymatrix{
\text{colim}_{\text{Or}G_\mathcal{F}} \textbf{A} \ar[r] \ar[d] & \text{colim}_{\text{Or}G_\mathcal{F}} \textbf{B} \ar[r] \ar[d] & \text{colim}_{\text{Or}G_\mathcal{F}} \textbf{C} \ar[d] \\
\text{colim}_{\text{Or}G} \textbf{A} \ar[r] & \text{colim}_{\text{Or}G} \textbf{B} \ar[r] & \text{colim}_{\text{Or}G} \textbf{C}
}$$
with rows being fiber sequences. The statement now follows from the $5$-lemma.
\end{proof}

We are concerned with one particular type of functor on the orbit category - the functor that associates to $G/H$ the algebraic $K$-theory spectrum of its group algebra over a fixed base ring $R$, $\textbf{K}RH$. However, note that algebraic $K$-theory is a priori only functorial in ring homomorphisms. This models the morphisms $\textbf{K}RH \rightarrow \textbf{K}RH'$ corresponding to inclusions $H \subset H'$. We also need functoriality with respect to conjugation morphisms $c_g  \colon  G/H \rightarrow G/(g^{-1}Hg)$. These can give a priori different ring homomorphisms $RH \rightarrow Rg^{-1}Hg$ depending on the choice of representative $g$, meaning there is no good functor $\text{Or}G \rightarrow \text{Rings}$. This issue has been addressed by James Davis and Wolfgang Lück, and we will summarize the main results necessary for our work here. In the following, denote by $\textup{Grpds}_1$ the 1-category of groupoids and functors between them, and by $\textup{Grpds}_{(2,1)}$ the (2,1)-category of groupoids, functors and natural transformations between. We can interpret $\textup{Grpds}_{(2,1)}$ as an $\infty$-category, which is equivalent to the subcategory of Spc spanned by 1-truncated spaces. The category $\textup{Grpds}_1$ has a model structure for which the weak equivalences are given by equivalences of groupoids, and the $\infty$-category $\textup{Grpds}_{(2,1)}$ is obtained by inverting this class. \mbox{(See e.g. \cite[Theorem 5.4]{Hollander2008})}

\begin{lemma}[Davis, Lück \cite{Davis1998SpacesOA} Chapter 2 and Lemma 2.4] \label{davislück}
Let $G$ be a group and $R$ be a ring. There exists a functor of $1$-categories
$$ \textup{\textbf{K}}^{alg}R(-)  \colon  \textup{Grpds}_1 \rightarrow \Omega\textup{-Sp} $$
where $\Omega\textup{-Sp}$ is the $1$-category of $\Omega$-Spectra. It has the following properties:
\begin{enumerate}
\item If $F_i \colon \mathcal{G}_0 \rightarrow \mathcal{G}_1$ for $i = 0, 1$ are functors of groupoids and $T: F_0 \rightarrow F_1$ is a natural transformation between them, then the induced maps of spectra 
$$ \textup{\textbf{K}}^{alg}R ( F_i ) \colon \textup{\textbf{K}}^{alg}R (\mathcal{G}_0) \rightarrow \textup{\textbf{K}}^{alg}R (\mathcal{G}_1)$$
are homotopy equivalent.
\item Let $\mathcal{G}$ be a groupoid. Suppose that $\mathcal{G}$ is connected, i.e. there is a morphism between any two objects. For an object $x \in \text{Ob}(\mathcal{G})$, let $\mathcal{G}_x$ be the full subgroupoid with precisely one object, namely $x$. Then the inclusion $i_x : \mathcal{G}_x \rightarrow \mathcal{G}$ induces a homotopy equivalence 
$$\textup{\textbf{K}}^{alg}R(i_x) \colon \textup{\textbf{K}}^{alg}R (\mathcal{G}_x) \rightarrow \textup{\textbf{K}}^{alg}R(\mathcal{G})$$
and $\textup{\textbf{K}}^{alg}R(\mathcal{G}_x)$ is isomorphic to the non-connective algebraic $K$-theory spectrum associated to the group ring $R\text{aut}_\mathcal{G}(x)$. 
\end{enumerate}
\end{lemma}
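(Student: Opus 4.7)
The plan is to construct $\textup{\textbf{K}}^{alg}R(-)$ as a composition of explicit 1-functorial constructions and then verify each property by tracing through. The passage $\mathcal{G} \mapsto \mathcal{G}_\oplus$ is the left adjoint to the forgetful functor from symmetric monoidal categories to categories, $\mathcal{C} \mapsto R\mathcal{C}$ is free $R$-linearization, $P(-)$ is Karoubian completion, and $(-)^{\cong}$ passes to the underlying groupoid; all of these are strict 1-functors. Group completion followed by non-connective delooping then lands in $\Omega\text{-Sp}$. Composing gives the required functor $\textup{Grpds} \to \Omega\text{-Sp}$.

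For property (1), first note that any natural transformation between two functors whose target is a groupoid is automatically a natural isomorphism. Let $\mathbb{I}$ denote the groupoid with two objects and a unique isomorphism between them. A natural isomorphism $T \colon F_0 \Rightarrow F_1$ is the same data as a functor $\widetilde{T}\colon \mathcal{G}_0 \times \mathbb{I} \to \mathcal{G}_1$ whose restrictions along the two endpoint inclusions $\mathcal{G}_0 \rightrightarrows \mathcal{G}_0 \times \mathbb{I}$ recover $F_0$ and $F_1$. Since $\mathbb{I}$ is equivalent to the terminal groupoid, both endpoint inclusions are equivalences of groupoids. Hence it suffices to show that $\textup{\textbf{K}}^{alg}R$ sends equivalences of groupoids to weak equivalences of spectra, which follows step by step: each building block in the construction is invariant under equivalences of (ordinary, $R$-linear, symmetric monoidal, or idempotent complete) categories by standard abstract nonsense, and group completion together with non-connective delooping preserves equivalences of symmetric monoidal groupoids.

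For property (2), since $\mathcal{G}$ is connected the inclusion $i_x\colon \mathcal{G}_x \hookrightarrow \mathcal{G}$ is an equivalence of groupoids (a quasi-inverse is obtained by choosing, for each object $y$, an isomorphism $y \to x$). The previous step then gives that $\textup{\textbf{K}}^{alg}R(i_x)$ is a homotopy equivalence. To identify $\textup{\textbf{K}}^{alg}R(\mathcal{G}_x)$, set $H = \textup{aut}_{\mathcal{G}}(x)$ and note that $R\mathcal{G}_x$ is the $R$-linear category with a single object whose endomorphism ring is $RH$. Its free symmetric monoidal additive envelope followed by idempotent completion is equivalent to the category of finitely generated projective $RH$-modules, and passing to the underlying groupoid, group completing, and delooping non-connectively produces the standard non-connective $K$-theory spectrum of $RH$ (e.g.\ via Segal's group completion together with the Bass delooping).

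The main obstacle is the final compatibility check: verifying that the abstract composite $(P(R\mathcal{G}_\oplus)^{\cong})^{gp}$, applied to the one-object groupoid with automorphism group $H$, agrees up to homotopy equivalence with a standard model of $\textup{\textbf{K}}RH$. This is a multi-step identification between the free-modules-plus-group-completion description and the finitely-generated-projectives description of algebraic $K$-theory, and while well-known, it requires some care at the level of bookkeeping to state cleanly in the groupoid-functorial setting.
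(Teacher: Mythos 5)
The paper does not prove this lemma---it imports it verbatim from Davis--L\"uck and uses it as a black box---so there is no in-paper argument to compare against. Judged on its own terms your outline is in the right spirit, but two points need tightening. First, in the proof of (1), knowing that $\textup{\textbf{K}}^{alg}R$ sends the two endpoint inclusions $i_0, i_1 \colon \mathcal{G}_0 \to \mathcal{G}_0 \times \mathbb{I}$ to weak equivalences does not by itself give that $\textup{\textbf{K}}^{alg}R(i_0)$ and $\textup{\textbf{K}}^{alg}R(i_1)$ are homotopic; you need the common retraction $p \colon \mathcal{G}_0 \times \mathbb{I} \to \mathcal{G}_0$ with $p \circ i_0 = \text{id} = p \circ i_1$, apply $\textup{\textbf{K}}^{alg}R$, and cancel the weak equivalence $\textup{\textbf{K}}^{alg}R(p)$ in the homotopy category before postcomposing with $\textup{\textbf{K}}^{alg}R(\widetilde{T})$. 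This is a standard step, but your write-up jumps over it. (Davis--L\"uck's own route is more pedestrian: they push the natural isomorphism through each layer of the construction and finish by citing the fact that naturally isomorphic additive functors induce homotopic maps on $K$-theory; your interval-groupoid argument, once completed, is a clean equivalent.)

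Second, for (2) you correctly identify that the real content is the comparison of $(P(R\,\mathcal{G}_{x,\oplus})^{\cong})^{gp}$, suitably delooped, with the standard non-connective $K$-theory spectrum of $R\,\text{aut}_{\mathcal{G}}(x)$, and you leave it as a black box. The subtlety worth at least naming is that the endomorphism monoids of $\mathcal{G}_{x,\oplus}$ are wreath products $S_n \wr \text{aut}_{\mathcal{G}}(x)$, so one must explain why $R$-linearization and idempotent completion recover finitely generated projective modules over the group ring, and why group completion together with Pedersen--Weibel delooping matches the usual non-connective $K$-theory spectrum. Your sketch correctly identifies this as the technical heart of the lemma but does not supply it, so as written the argument is a plan rather than a proof.
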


The $\infty$-category of spectra is a localization of the nerve of the $1$-category of $\Omega$-spectra by the class of weak equivalences. Let $L : N( \Omega\text{-Sp} ) \rightarrow \text{Sp}$ be the corresponding localization functor. Lemma \ref{davislück} (1) implies that the composite $L \circ \textup{\textbf{K}}^{alg}R(-)$ sends equivalences of groupoids to equivalences in $\text{Sp}$ and thus descends to a unique, well-defined functor of $\infty$-categories
$$ \textup{\textbf{K}}^{alg}R(-)  \colon  \textup{Grpds}_{(2,1)} \rightarrow \textup{Sp},$$
which we will, by abuse of notation, denote by the same symbol.

\begin{remark}
A cleaner construction of $K$-theory as a functor of groupoids, that the author was not familiar with at the time of writing the first version of this paper, comes from using the non-connective $K$-theory functor on stable, idempotent complete $\infty$-categories
$$ \textbf{K} :  \mathrm{Cat}^{\mathrm{perf}} \rightarrow \mathrm{Sp} $$
as developed in \cite{Blumberg_2013}. The wanted functor defined on groupoids can be obtained by pre-composition with the functor
$$ \textup{Grpds}_{(2,1)} \rightarrow \mathrm{Cat}^{\mathrm{perf}},~ \mathcal{G} \mapsto \mathcal{G} \otimes \mathrm{Perf}_R, $$
where $\mathrm{Perf}_R$ is the stable, idempotent complete $\infty$-category of perfect $R$-complexes. This has the advantage that the ability to work with ring spectra instead of just rings is immediate. We will not digress further on this matter.
\end{remark}

\begin{theorem} \label{ktheoryfunctor}
Let $G$ be a group and $R$ be a ring. There exists a functor
$$ \textup{\textbf{K}}R(-)  \colon  \textup{Or}G \rightarrow \textup{Sp} $$
with the properties
\begin{enumerate}
\item $\textup{\textbf{K}}R(G/H) \simeq \textup{\textbf{K}}RH$ where $\textup{\textbf{K}}RH$ is the non-connective algebraic $K$-theory spectrum of the group ring $RG$.
\item If $H \subset H'$ giving the canonical map $G/H \rightarrow G/H'$, the induced map $\textup{\textbf{K}}RH \rightarrow \textup{\textbf{K}}RH'$ corresponds to the map induced by the ring homomorphism $RH \rightarrow RH'$.
\item The induced $G$-action on the spectrum $\textup{\textbf{K}}R$ via the conjugation morphisms $c_g  \colon  G/\set{1} \rightarrow G/\set{1}$ is trivial.
\item Let $g \in G$. The action of the conjugation morphism $c_g  \colon  G/H \rightarrow G/(g^{-1}Hg)$ on the homotopy groups $K_n RH \rightarrow K_n R (g^{-1}Hg)$ is induced by the ring homomorphism $RH \rightarrow R (g^{-1}Hg), h \mapsto g^{-1}hg$ and independent of the chosen representative $g$.
\end{enumerate}
\end{theorem}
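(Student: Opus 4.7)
The plan is to obtain $\textbf{K}R(-) : \text{Or}G \rightarrow \text{Sp}$ as a composition. First I would define the transport groupoid functor $\overline{(-)} : \text{Or}G \rightarrow \text{Grpds}$ which sends a transitive $G$-set $S$ to the groupoid with objects the elements of $S$ and with morphisms $s \rightarrow s'$ given by the elements $g \in G$ satisfying $gs = s'$, and composition inherited from $G$. This is a strict functor of $1$-categories: a $G$-equivariant map $S \rightarrow S'$ passes to a functor $\overline{S} \rightarrow \overline{S'}$ that keeps the labels $g \in G$. I would then compose with the Davis--Lück functor $\textbf{K}^{alg}R(-)$ of Lemma \ref{davislück} and the canonical inclusion $\Omega\text{-Sp} \hookrightarrow \text{Sp}$, defining $\textbf{K}R(-) \defeq \textbf{K}^{alg}R(\overline{(-)})$.

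Next I would verify the four properties in turn. For the first, $\overline{G/H}$ is connected, and the automorphism group of the object $eH$ equals $H$, so property (2) of Lemma \ref{davislück} produces the equivalence $\textbf{K}R(G/H) = \textbf{K}^{alg}R(\overline{G/H}) \simeq \textbf{K}^{alg}R(H) \cong \textbf{K}RH$. For the second, an inclusion $H \subset H'$ yields a map $G/H \rightarrow G/H'$ whose induced functor on transport groupoids, restricted to the one-object subgroupoids at $eH$ and $eH'$, is precisely the group inclusion $H \hookrightarrow H'$; naturality of the equivalences in property (2) of Lemma \ref{davislück} then identifies the induced spectrum map with the one coming from the ring homomorphism $RH \rightarrow RH'$.

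For the conjugation statements, the orbit-category morphism $c_g : G/H \rightarrow G/(g^{-1}Hg)$ sends $kH$ to $(kg)(g^{-1}Hg)$; restricted to automorphism groups at the basepoints $eH$ and $e(g^{-1}Hg)$, the induced groupoid morphism is the group homomorphism $h \mapsto g^{-1}hg$, which gives the ring map of property (4). Independence of the chosen representative $g$ (and the homotopy to the identity in the case $H = \{1\}$) follows from property (1) of Lemma \ref{davislück} applied to natural transformations between the groupoid functors: if $g$ and $g'$ represent the same coset then conjugation by $g^{-1}g' \in g^{-1}Hg$ furnishes a natural isomorphism between the two functors $\overline{G/H} \rightarrow \overline{G/(g^{-1}Hg)}$. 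In the special case of the trivial subgroup, the functor $R_g : \overline{G} \rightarrow \overline{G}$ induced by $c_g$ is right multiplication by $g$, and the components $k \mapsto kgk^{-1}$ define a natural isomorphism from the identity to $R_g$; property (1) then makes $c_g$ act by a homotopy equivalence to the identity on $\textbf{K}R$.

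The substantive work has been done in Lemma \ref{davislück}: the main hurdle is ensuring strict $1$-functoriality from $\text{Or}G$ into a model of $\text{Sp}$, which I expect to be the only technical point, and it is handled by factoring through the $1$-categorical construction $\overline{(-)}$ before invoking Davis--Lück. Everything else reduces to reading off what the transport groupoid does on the generating inclusion and conjugation morphisms of $\text{Or}G$ and translating via the homotopy invariance of $\textbf{K}^{alg}R$ on groupoids.
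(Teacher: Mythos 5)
Your approach is essentially the same as the paper's: define $\textbf{K}R(-)$ by composing the transport groupoid functor (which the paper calls $G\smallint(-)$) with the Davis--L\"uck functor $\textbf{K}^{alg}R(-)$ and then passing to $\text{Sp}$, and verify the four properties using parts (1) and (2) of Lemma~\ref{davislück}.

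One small but genuine imprecision: in the conjugation step you write that $c_g\colon G/H\to G/(g^{-1}Hg)$ ``restricted to automorphism groups at the basepoints $eH$ and $e(g^{-1}Hg)$'' yields $h\mapsto g^{-1}hg$. As stated this doesn't typecheck, since the transport-groupoid functor induced by $c_g$ sends the object $eH$ to $g\cdot(g^{-1}Hg)$, not to the basepoint $e(g^{-1}Hg)$; indeed its automorphism group at $g(g^{-1}Hg)$ is again $H\subset G$, and the functor is the identity on it. The map $h\mapsto g^{-1}hg$ only appears after conjugating by the connecting isomorphism $g^{-1}\colon g(g^{-1}Hg)\to e(g^{-1}Hg)$. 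The paper makes this explicit by precomposing with the translation functor $\text{trans}_{g^{-1}}$ so that basepoints go to basepoints, noting that $\text{trans}_{g^{-1}}$ is naturally isomorphic to the identity. Your independence-of-representative argument via an inner automorphism of $g^{-1}Hg$ is fine (and slightly different in flavour from the paper's, which argues that the two translation functors are naturally equivalent), and your explicit natural isomorphism $k\mapsto kgk^{-1}$ in the $H=\{1\}$ case is a nice alternative to the paper's contractibility observation. Overall the reasoning is sound; just be careful to insert the connecting isomorphism before asserting what the map does on automorphism groups.
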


\begin{remark}
The action in the last point of $c_g$ on $K_0$ can be understood in the following way. It sends a f.g.\ projective $RH$-module $P$ to the $R (g^{-1}Hg)$-module $P^g$ with same underlying $R$-module as $P$ and the scalar multiplication of an element $h' \in g^{-1}Hg$ on $x \in P$ given by $h' \ast x \defeq g h' g^{-1} \cdot x$.
\end{remark}

\begin{proof} A few words before we begin with the actual proof: In using Lemma \ref{davislück} we never really need to know what the value of $\textbf{K}^{alg}R(\mathcal{G})$ at some groupoid $\mathcal{G}$ actually is, only that a \emph{choice} of base-point $x \in \mathcal{G}$ induces an equivalence $\textbf{K}^{alg}R(\mathcal{G}) \simeq \textup{\textbf{K}}R(G/H)$. Different choices lead to (non-trivial) maps of $K$-theory spectra.

There is a functor $G \smallint ( - ) : \textup{Or}G \rightarrow \textup{Grpds}_{(2,1)}$ which sends the $G$-set $G/H$ to the groupoid $G \smallint G/H$ with object set $G/H$ and a morphism from $gH$ to $g'H$ for each element $g'' \in G$ such that $g''gH = g'H$.\footnote{This provides an actual functor of $1$-categories. Then post-compose with the localization $\textup{Grpds}_{1} \rightarrow \textup{Grpds}_{(2,1)}$. } Define $\textup{\textbf{K}}R(-)$ as the composite 
$$ \textup{\textbf{K}}R(-) \defeq \textbf{K}^{alg}R \circ G \smallint ( - ) : \textup{Or}G \rightarrow \textup{Sp} $$
with the functor $\textbf{K}^{alg}R$ being given as above.

For a subgroup $H$ of $G$ write $BH$ for the groupoid with a single object and automorphism group $H$. 
The statement that $\textup{\textbf{K}}R(G/H) \simeq \textup{\textbf{K}}RH$ follows from Lemma \ref{davislück} (2), since the automorphism group of the object $H$ in $G \smallint G/H$ is exactly $H$. This means we have the inclusion functor $BH \rightarrow G \smallint G/H$, which produces the claimed equivalence in the first point.

If $H \subset H'$ are two subgroups of $G$, we have a functor $BH \rightarrow BH'$, which fits into the commutative square
$$\xymatrix{
BH \ar[r] \ar[d] & G \smallint G/H \ar[d] \\
BH' \ar[r] & G \smallint G/H'.
}$$
The map $\textbf{K}^{alg}R(BH) \rightarrow \textbf{K}^{alg}R(BH')$ is equivalent to the map $\textup{\textbf{K}}RH \rightarrow \textup{\textbf{K}}RH'$ in algebraic $K$-theory induced by the ring homomorphism $RH \rightarrow RH'$, proving the second claim.

For the third claim note that the groupoid $G \smallint G/\set{1}$ is contractible since every object is in fact both terminal and initial, which means the induced $G$-action on $G \smallint G/\set{1} \simeq \mathrm{pt}$ is trivial. In particular, the same holds when applying $K$-theory.

For the last point, we have a commutative square of groupoids
\begin{center}
\begin{tikzcd}
BH \arrow[r,"{\text{inc}_H}"] \arrow[d,swap,"{ g^{-1}(-)g }"] & G \smallint G/H \ar[d,"{c_g  \circ \text{trans}_{g^{-1}}}"] \\
B(g^{-1}Hg) \arrow[r,"{\text{inc}_{g^{-1}Hg}}"] & G \smallint G/(g^{-1}Hg) 
\end{tikzcd}
\end{center}
where the functor $g^{-1}(-)g: BH \rightarrow B(g^{-1}Hg)$ is given via the group homomorphism that is the conjugation $h \mapsto g^{-1}hg$ (it is trivial on objects as both groupoids have only a single object), and 
$$\text{trans}_{g^{-1}} : G \smallint G/H \rightarrow G \smallint G/H$$ 
is the functor that acts on objects as 
$$g' H \mapsto g^{-1} g' H,$$
and sends the morphism 
$$g' H \xrightarrow{k} k g' H$$ to 
$$g^{-1} g' H \xrightarrow{ g^{-1}k g} g^{-1} k  g' H.$$ 
Note that $\text{trans}_{g^{-1}}$ is an auto-equivalence of $G \smallint G/H$ that sends the object $ H $ to $g^{-1} H$.
The group homomorphism $c_g \colon H \rightarrow g^{-1}Hg$ induces the claimed map in $K_n$, thus showing the last point. 
If $g''$ is another element in $G$ such that $c_g = c_{g''} : G/H \rightarrow G/g^{-1}Hg$ represent the same map in $\text{Or}G$, which is exactly the case when $g (g'')^{-1} \in H$, it is elementary to show that $\text{trans}_{g^{-1}}$ and $\text{trans}_{(g'')^{-1}}$ are naturally equivalent functors, thus inducing homotopic maps on $K$-theory spectra and therefore the same map in $K_n$.
\end{proof}

\begin{remark}
The reader may be tempted to wonder what the action of $\text{trans}_{g}$ on $\textup{\textbf{K}}RH$ under the identification of $\textup{\textbf{K}}RH$ with $\textbf{K}^{alg}R(G \smallint G/H)$ is. Such a question carries some risk of confusion, as this identification hinges on the choice of basepoint $H$ in $G \smallint G/H$, which is not preserved by $\text{trans}_{g}$, unless $g \in H$. While the automorphism group of the point $gH$ is of course isomorphic to $H$, this identification is again obtained by conjugation with some element, so the answer simply reduces to the already established identification of the conjugation action.
\end{remark}

\begin{definition}[Farrell-Jones conjecture] A group $G$ has the \emph{Farrell-Jones property} if the functor for non-connective algebraic $K$-theory $\textbf{K}R  \colon  \text{Or}G \rightarrow \text{Sp}$ satisfies assembly for the family of virtually cyclic subgroups and any ring $R$. We will also sometimes refer to this as saying that $G$ \emph{satisfies the Farrell-Jones conjecture}.
\end{definition}

\begin{remark}The Farrell-Jones conjecture is known to hold for a wide range of groups. A recent summary of results can be found in \cite{reich_varisko}, Theorem 27.
\end{remark}

In order to compute the colimits involved in the assembly maps, a useful tool for geometric arguments is the notion of classifying spaces for a family of subgroups $\mathcal{F}$ of $G$.

\begin{definition}
Let $\mathcal{F}$ be a family of subgroups of $G$. We call the $G$-space $E(G;\mathcal{F})$ with
$$ E(G;\mathcal{F})^H \simeq \begin{cases} \text{pt} & \text{if}~ H \in \mathcal{F} \\ \emptyset & \text{if}~ H \not\in \mathcal{F}, \end{cases} $$
the \emph{classifying space for the family} $\mathcal{F}$. Note that $E(G;\mathcal{F})$ is unique up to equivalence. We will sometimes write $E\mathcal{F}$ if the context is clear.
\end{definition}

\begin{remark}
The $G$-space $E(G;\mathcal{F})$ can always be modelled by a $G$-CW-complex, see \cite{Lueck2005SurveyOC} Theorem 1.9.
\end{remark}

\begin{example}
The universal cover $EG$ of $BG$ with its free $G$-action is a classifying space for the trivial family $\text{Triv}$ consisting only of the single subgroup $\set{1}$. The point $\text{pt} \simeq G/G$ with trivial $G$-action is a classifying space for the family $\text{All}$ of all subgroups.
\end{example}

\begin{theorem}[See also \cite{Mathew_2019}, Proposition A.2]
Let $i_\mathcal{F}$ be the inclusion of the category $\textup{Or}G_\mathcal{F}$ into the category of $G$-spaces. Then
$$ \textup{colim}(i_\mathcal{F}) \simeq E(G;\mathcal{F}).$$
\end{theorem}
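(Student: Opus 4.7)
The plan is to identify $X \defeq \textup{colim}(i_\mathcal{F})$ by computing all of its fixed point spaces $X^K$ and checking the defining property of $E(G;\mathcal{F})$. First I would invoke Elmendorf's theorem, which gives an equivalence $G\textup{-Spc} \simeq \textup{Fun}(\textup{Or}G^{op}, \textup{Spc})$ under which a $G$-space $Y$ corresponds to the functor $G/K \mapsto Y^K$, and in particular the orbit $G/H$ itself corresponds to the representable presheaf $y(G/H) = \textup{Map}_G(-, G/H)$. Under this equivalence, the functor $i_\mathcal{F}$ becomes precisely the restriction to $\textup{Or}G_\mathcal{F}$ of the Yoneda embedding of $\textup{Or}G$.

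Since Elmendorf's equivalence preserves colimits and colimits in functor categories are computed pointwise, for each subgroup $K \leq G$ I would obtain
$$ X^K \simeq \textup{colim}_{G/H \in \textup{Or}G_\mathcal{F}} \textup{Map}_G(G/K, G/H), $$
reducing the problem to two colimit computations in $\textup{Spc}$.

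If $K \in \mathcal{F}$, then $G/K$ itself is an object of $\textup{Or}G_\mathcal{F}$, and the displayed colimit is that of a corepresentable functor on $\textup{Or}G_\mathcal{F}$. The standard straightening--unstraightening description identifies this colimit with the classifying space of the slice $G/K \downarrow \textup{Or}G_\mathcal{F}$, which has $(G/K, \textup{id})$ as an initial object and is therefore contractible, giving $X^K \simeq \textup{pt}$. If instead $K \notin \mathcal{F}$, then for any $H \in \mathcal{F}$ a $G$-map $G/K \to G/H$ would realize $K$ as subconjugate to $H$, and closure of $\mathcal{F}$ under subgroups and conjugation would force $K \in \mathcal{F}$, a contradiction. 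Hence $\textup{Map}_G(G/K, G/H)$ is empty for every $H \in \mathcal{F}$, the diagram is pointwise empty, and $X^K = \emptyset$.

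The only real obstacle is bookkeeping: keeping the three identifications --- Elmendorf's equivalence, the Yoneda embedding, and pointwise colimit computation --- carefully aligned, and being confident that the $\infty$-categorical colimit of a representable functor is contractible (a standard fact). Having verified both cases of $X^K$, the uniqueness of classifying spaces of a family up to $G$-equivariant homotopy, cited in the preceding remark, completes the identification $X \simeq E(G;\mathcal{F})$.
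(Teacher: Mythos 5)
Your proof is correct and takes essentially the same route as the paper: both use Elmendorf's theorem to reduce $H$-fixed points of the colimit to a pointwise colimit of mapping spaces, observe contractibility of the colimit of a corepresentable functor when $H \in \mathcal{F}$, and emptiness of the mapping spaces when $H \notin \mathcal{F}$. The only (inessential) difference is the justification of contractibility of the corepresentable colimit — you appeal to the contractible slice via straightening, whereas the paper's Example~\ref{colimitcorepresentable} uses the coend form of the Yoneda lemma.
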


\begin{proof}
Let $H$ be a subgroup of $G$. Under the equivalence $G$-Spc $\simeq \text{Fun}( \text{Or}G^{op}, \text{Spc} )$ given by Elmendorf's theorem the operation of taking $H$-fixed points corresponds to evaluation at $G/H$. Since colimits of functors are computed objectwise this means that taking $H$-fixed points commutes with colimits. Now, if $H \not\in \mathcal{F}$, then 
$$\text{colim}(i_\mathcal{F})^H \simeq \text{colim}_{G/K \in \text{Or}G_\mathcal{F}}( (G/K)^H ) = \emptyset.$$
Now suppose $H$ in $\mathcal{F}$. Then
$$\text{colim}(i_\mathcal{F})^H \simeq \text{colim}_{G/K \in \text{Or}G_\mathcal{F}}( \text{Map}_{\text{Or}G_\mathcal{F}}(G/H, G/K) )$$
is the colimit over a corepresentable functor and thus contractible.
\end{proof}

\begin{remark}
The statement that a colimit over a corepresentable functor is contractible can be seen in the following way. If $c \in \mathcal{C}$ is an object, then 
$$\mathrm{colim}_{\mathcal{C}} \mathrm{Map}_{\mathcal{C}}(c,-) \simeq | \mathcal{C}_{c/} | \simeq \mathrm{pt}.$$ 
Here we use the fact that the colimit of  $\mathrm{Spc}$-valued functor is given as the realization of its classifying left fibration (see e.g. \cite{luriehtt}, Corollary 3.3.4.6.), and that the classifying left fibration of $\mathrm{Map}_{\mathcal{C}}(c,-)$ is the forget functor $\mathcal{C}_{c/} \rightarrow \mathcal{C}$ (see e.g. \cite{LandIntroductionInfinityCategories}, Chapter 4.2.). The $\infty$-category $\mathcal{C}_{c/}$ is evidently contractible, as it contains an initial object.
\end{remark}

The following lemma now explains why classifying spaces of families are such a useful tool for understanding assembly maps.
\begin{lemma}
Let $\mathcal F$ be a family of subgroups of $G$ and $\textbf{A}$ a functor $\text{Or}G \rightarrow \text{Sp}$. There is a natural equivalence
$$ \text{colim}_{\text{Or}G_\mathcal{F}} \textbf{A} \xrightarrow{\sim} E(G;\mathcal{F}) \otimes_{\text{Or}G} \textbf{A}. $$
\end{lemma}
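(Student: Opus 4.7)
The plan is to combine the identification of $E(G;\mathcal{F})$ as a colimit over $\text{Or}G_\mathcal{F}$ (from the previous theorem) with the fact that both $(-)_+$ and $-\wedge_{\text{Or}G}\textbf{A}$ are cocontinuous, and then reduce the problem to a co-Yoneda/density calculation for the coend defining the orbit smash product.

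First, invoking the preceding theorem, I would rewrite $E(G;\mathcal{F}) \simeq \text{colim}_{G/K \in \text{Or}G_\mathcal{F}} G/K$ in the $\infty$-category of $G$-spaces. Next, the functor $(-)_+ \colon G\text{-Spc}\to G\text{-Spc}_*$ is left adjoint to the forgetful functor (which sends $(X,*)$ to $X$), so it preserves colimits. This gives
$$ E(G;\mathcal{F})_+ \simeq \text{colim}_{G/K \in \text{Or}G_\mathcal{F}} (G/K)_+. $$
The orbit smash product $-\wedge_{\text{Or}G}\textbf{A}$ is defined as a coend and is cocontinuous in the first variable (the coend commutes with colimits computed levelwise in $X^H$, and the smash product $-\wedge \textbf{A}(H)$ is a left adjoint in $\text{Sp}$). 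Hence
$$ E(G;\mathcal{F})_+ \wedge_{\text{Or}G} \textbf{A} \simeq \text{colim}_{G/K \in \text{Or}G_\mathcal{F}} \bigl( (G/K)_+ \wedge_{\text{Or}G} \textbf{A} \bigr). $$

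The key remaining step is to identify $(G/K)_+ \wedge_{\text{Or}G} \textbf{A} \simeq \textbf{A}(K)$ naturally in $G/K \in \text{Or}G$. Unwinding definitions, this coend is
$$ \int^{G/H} (G/K)^H_+ \wedge \textbf{A}(H) \;=\; \int^{G/H} \text{Map}_{\text{Or}G}(G/H,G/K)_+ \wedge \textbf{A}(H), $$
which is precisely the co-Yoneda (density) formula applied to the covariant functor $\textbf{A}\colon \text{Or}G \to \text{Sp}$ evaluated at $G/K$; I would cite (or briefly verify via the coend machinery set up in Section \ref{endsandcoends}) that this coend computes $\textbf{A}(K)$. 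Substituting this back yields
$$ E(G;\mathcal{F})_+ \wedge_{\text{Or}G} \textbf{A} \;\simeq\; \text{colim}_{G/K \in \text{Or}G_\mathcal{F}} \textbf{A}(K) \;=\; \text{colim}_{\text{Or}G_\mathcal{F}} \textbf{A}, $$
and naturality of each equivalence above produces the required natural comparison map.

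The hard part is not any single step but ensuring that the identifications are compatible: in particular that the co-Yoneda equivalence $(G/K)_+\wedge_{\text{Or}G}\textbf{A} \simeq \textbf{A}(K)$ is natural in $G/K$ so that it assembles into an equivalence after taking the colimit over $\text{Or}G_\mathcal{F}$, and that the resulting composite map agrees with the canonical one coming from the inclusion $\text{Or}G_\mathcal{F} \hookrightarrow \text{Or}G$. This is essentially formal once one works inside the coend formalism of Section \ref{endsandcoends}, so I would relegate the bookkeeping to that appendix rather than spell it out in the body of the proof.
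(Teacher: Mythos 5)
Your proposal is correct and takes essentially the same approach as the paper: identify $E(G;\mathcal{F})$ as $\text{colim}_{\text{Or}G_\mathcal{F}} G/K$, pull the colimit out through $(-)_+ \wedge_{\text{Or}G} \textbf{A}$ by cocontinuity, and resolve $(G/K)_+ \wedge_{\text{Or}G} \textbf{A} \simeq \textbf{A}(K)$ via the co-Yoneda identity. The paper states this more tersely as a one-line chain of equivalences, but the content and the route are the same.
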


\begin{proof} This follows from the definition of the orbit tensor product. We have
$$E(G;\mathcal{F}) \otimes_{\text{Or}G} \textbf{A} \simeq ( \text{colim}_{G/K \in \text{Or}G_\mathcal{F}}( G/K ) )\otimes_{\text{Or}G} \textbf{A} \simeq \text{colim}_{G/K \in \text{Or}G_\mathcal{F}}( \textbf{A}(K) ). $$
\end{proof}
We will also need the following two results on assembly in $K$-theory.
\begin{theorem}[See \cite{Bartels_2003}] \label{fintovcycissplit}
The map
$$ E\textup{Fin} \otimes_{\textup{Or}G} \textup{\textbf{K}} R \rightarrow E\textup{VCyc}\otimes_{\text{Or}G} \textup{\textbf{K}} R $$
is split injective and is so naturally with respect to the ring $R$ and the group $G$.
\end{theorem}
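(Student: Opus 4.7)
The plan is to produce a natural retraction of the map, since the existence of such a retraction is equivalent to split injectivity. By the colimit description $E\mathcal{F}_+ \wedge_{\text{Or}G}\textbf{K}R \simeq \operatorname{colim}_{\text{Or}G_{\mathcal F}}\textbf{K}R(-)$, it suffices to construct a family of retractions compatible with restriction along the inclusion $\text{Or}G_{\text{Fin}} \subset \text{Or}G_{\text{VCyc}}$. Working at the level of each virtually cyclic subgroup $V\subseteq G$, I would aim to construct natural retractions
$$\textbf{K}RV \longrightarrow E(V;\text{Fin}|_V)_+ \wedge_{\text{Or}V} \textbf{K}R$$
to the restricted assembly map, and then argue that these assemble into a transformation of functors on $\text{Or}G_{\text{VCyc}}$.

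For $V$ finite there is nothing to do, so only the infinite virtually cyclic case matters. Using the standard classification, such a $V$ is either of type I ($V = F\rtimes_\alpha\mathbb{Z}$ for finite $F$) or of type II ($V = V_1 \ast_F V_2$ with $F$ finite and $[V_i:F]=2$). In the type I case, $\mathbb{R}$ with the evident $V$-action is a $1$-dimensional model of $E(V;\text{Fin}|_V)$ with all isotropy conjugate to $F$, so by Lemma \ref{1dimensionalclassifyingspace} the restricted assembly is equivalent to the cofiber of $\alpha_\ast - \text{id}\colon \textbf{K}RF \to \textbf{K}RF$. The Bass-Heller-Swan theorem identifies this cofiber with a canonical direct summand of $\textbf{K}RV$, whose complement is the sum of the two Nil-spectra $\textbf{NK}_{\alpha^{\pm 1}}RF$; the projection onto this summand gives the required retraction. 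In the type II case, the Bass-Serre tree of $V$ provides a $1$-dimensional model of $E(V;\text{Fin}|_V)$, and Waldhausen's Mayer-Vietoris decomposition of the $K$-theory of an amalgamated free product gives the analogous splitting, with complement a Nil-spectrum associated to the pair $(F \subset V_i)$.

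The core of the argument is then to show these virtually-cyclic-level retractions are compatible with the morphisms of $\text{Or}G_{\text{VCyc}}$, which are generated by subgroup inclusions and conjugations. Equivalently, one needs the Nil-summands to be preserved under restriction to sub-virtually-cyclic groups and by conjugation. This is the main obstacle and it is the content of Bartels' paper: he constructs the splitting via a controlled-topology transfer down to $\mathbb{R}$ (in type I) or the Bass-Serre tree (in type II), which makes naturality essentially geometric. An alternative is to package the Bass-Heller-Swan and Waldhausen splittings as natural transformations out of the universal additive invariant and track their behavior under the restriction functors between module categories, but either route requires genuine work to organize coherently.

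Naturality in the ring $R$ and the group $G$ is built in at every stage: Bass-Heller-Swan and Waldhausen decompositions are functorial in ring homomorphisms, the functor $\textbf{K}R(-)$ of Theorem \ref{ktheoryfunctor} is functorial in $G$, and the geometric models of $E(V;\text{Fin}|_V)$ are preserved by group homomorphisms sending finite subgroups to finite subgroups. Putting these together yields the claimed split injection, natural in both $R$ and $G$.
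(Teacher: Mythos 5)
The paper does not actually prove this statement; it is imported verbatim from Bartels' article, with no proof supplied. So there is no in-paper argument to compare against, and the question is only whether your sketch itself constitutes a proof.

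It does not, and you essentially say so. The strategic outline is sound and matches the structure of Bartels' argument: classify infinite virtually cyclic $V$ into type I and type II, reduce the relative assembly map over $V$ to a $1$-dimensional model via Lemma \ref{1dimensionalclassifyingspace}, and identify the cofiber term as the Nil-summand in the Bass--Heller--Swan (type I) or Waldhausen (type II) decomposition of $\textbf{K}RV$. But the theorem asserts a retraction of the map $E\text{Fin}_+ \wedge_{\text{Or}G}\textbf{K}R \to E\text{VCyc}_+\wedge_{\text{Or}G}\textbf{K}R$, and to produce that from your local retractions $\textbf{K}RV \to E(V;\text{Fin}|_V)_+\wedge_{\text{Or}V}\textbf{K}R$ you would need them to assemble into a natural transformation $\textbf{K}R(-) \Rightarrow \textbf{K}R_{\text{Fin}}(-)$ on $\text{Or}G_{\text{VCyc}}$ splitting the canonical map. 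Concretely this requires that for every morphism in $\text{Or}G_{\text{VCyc}}$ (inclusions $V \subset V'$ between virtually cyclic groups of possibly different types, and conjugations) the corresponding square of spectra commutes up to coherent homotopy, which in turn demands that the Nil-summands behave functorially under these restriction and conjugation maps. You explicitly punt on this ("either route requires genuine work to organize coherently"). That is exactly the nontrivial content of the cited reference, where Bartels sidesteps the coherence problem entirely by constructing the retraction via a controlled-topology transfer on a geometric model, rather than by trying to glue the algebraic BHS/Waldhausen splittings. As written, your proposal is a correct high-level plan plus an honest admission that the crucial step is missing; it therefore does not prove the theorem.
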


\begin{lemma}[See \cite{Lueck_Reich_2005}, Proposition 2.14]  \label{fintovcycisorational}  
Let $k$ be a field of characteristic $0$. Then the map
$$ E \text{Fin}\otimes_{\text{Or}G} \textbf{K} k   \rightarrow E \text{VCyc}\otimes_{\text{Or}G} \textbf{K} k $$
is an equivalence.
\end{lemma}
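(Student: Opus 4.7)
The plan is to prove this by the Transitivity Principle (often attributed to Farrell--Jones or Lück--Reich): to show that the relative assembly map
$$ E\text{Fin}_+ \wedge_{\text{Or}G} \textbf{K}\mathbb{Q} \rightarrow E\text{VCyc}_+ \wedge_{\text{Or}G} \textbf{K}\mathbb{Q} $$
is an equivalence for $G$, it suffices to show that for every virtually cyclic subgroup $V \leq G$ the absolute assembly map
$$ E(V;\text{Fin})_+ \wedge_{\text{Or}V} \textbf{K}\mathbb{Q} \rightarrow \textbf{K}\mathbb{Q}V $$
is an equivalence. The reason is that, under Elmendorf's equivalence, the cofiber of the map in question can be written as a colimit indexed by $\text{Or}G_{\text{VCyc}}$ of the cofibers of the individual assembly maps for each orbit $G/V$. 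Concretely, one factors the inclusion $\text{Or}G_{\text{Fin}} \hookrightarrow \text{Or}G_{\text{VCyc}}$ through the family of subgroups of virtually cyclic groups and observes, via Lemma~\ref{twooutofthreeassembly} and a standard Bousfield-style argument, that the global map is an equivalence precisely when each local one is. Finite $V$ are trivial (the map is the identity), so the content lies entirely in the infinite virtually cyclic case.

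Next, I would use the well-known structure theorem for infinite virtually cyclic groups: either $V$ is of \emph{type I}, i.e.\ $V \cong F \rtimes_\alpha \mathbb{Z}$ with $F$ finite, or of \emph{type II}, i.e.\ $V \cong G_1 \ast_F G_2$ with $F$ of index $2$ in both $G_1$ and $G_2$. In each case there is an explicit $1$-dimensional model for $E(V;\text{Fin})$: for type~I a single $0$-cell $V/F$ with two identified $1$-cells glued along $V/F$, and for type~II a pushout of $V/G_1$ and $V/G_2$ along $V/F$. Applying Lemma~\ref{1dimensionalclassifyingspace}, the left-hand side of the absolute assembly map becomes the cofiber of an explicit map of $K$-theory spectra of group rings of finite groups.

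With these identifications, the target $\textbf{K}\mathbb{Q}V$ decomposes via the classical Bass--Heller--Swan formula in type~I and Waldhausen's Mayer--Vietoris sequence for amalgamated products in type~II. In both cases the difference between this decomposition and the source of the assembly map is exactly the Nil (or twisted Nil) contributions $NK_\ast(\mathbb{Q}F;\alpha)$ or $\text{Nil}_\ast(\mathbb{Q}F;\mathbb{Q}G_1,\mathbb{Q}G_2)$. The key point is that $\mathbb{Q}F$, $\mathbb{Q}G_1$, and $\mathbb{Q}G_2$ are all finite-dimensional semisimple $\mathbb{Q}$-algebras (by Maschke's theorem), hence regular, and Nil groups of regular rings (and Waldhausen Nil groups over regular coefficient rings along free bimodules) vanish. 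Thus the assembly map identifies source and target, completing the reduction.

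The main obstacle I anticipate is formulating the Transitivity Principle cleanly in the $\infty$-categorical orbit-category framework set up earlier, in particular checking that the pointwise vanishing of the cofibers over all $V \in \text{VCyc}$ really does globalize; this amounts to checking that the coend $E\text{VCyc}_+ \wedge_{\text{Or}G} (-)$ is exact on the relevant functor and that the restriction of $E\text{Fin}$ along $V \hookrightarrow G$ yields a model for $E(V;\text{Fin})$. The semisimplicity input and the Bass--Heller--Swan/Waldhausen computations themselves are entirely classical and can be invoked as a black box.
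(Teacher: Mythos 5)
Your proposal correctly reconstructs the proof of the cited reference (Lück--Reich, Proposition 2.14); the paper itself does not supply a proof but simply invokes that result, and the ingredients you use are exactly the ones the paper later develops for other purposes in Section~\ref{infinitegroups} and the virtually cyclic section: the one-dimensional models of $E(V;\text{Fin})$ for types VC1 and VC2, Lemma~\ref{1dimensionalclassifyingspace}, and the vanishing of Bass--Heller--Swan and Waldhausen Nil terms over the semisimple rings $\mathbb{Q}F$, $\mathbb{Q}G_1$, $\mathbb{Q}G_2$. Two small caveats. First, the reduction from the relative assembly map for $\text{Fin}\subset\text{VCyc}$ over $G$ to absolute assembly maps over each virtually cyclic $V$ is the Transitivity Principle, which is a nontrivial external input and is not a consequence of Lemma~\ref{twooutofthreeassembly} alone; you should cite it explicitly (e.g.\ Lück--Reich, Theorem~2.9, or the version in Lück's survey on classifying spaces for families) rather than gesture at a ``Bousfield-style argument,'' and you also need the standard compatibility that restricting $E(G;\text{Fin})$ along $V\hookrightarrow G$ gives a model for $E(V;\text{Fin})$, which holds because $\text{Fin}$ is closed under passage to subgroups. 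Second, your description of the type~I cell structure (``two identified $1$-cells'') is slightly off: the $1$-dimensional model has a single $V/F$-type $0$-cell and a single $V/F$-type $1$-cell with attaching map $(\mathrm{id},\tilde t\cdot)$, exactly as in the pushout square recorded before Theorem~\ref{ktheoryoverrationalsofvirtuallycyclicisfree}; this does not affect the argument, since Lemma~\ref{1dimensionalclassifyingspace} produces the same fiber sequence either way.
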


\begin{remark} \label{rationalktheoryfiniteassembly}
A consequence of Lemma \ref{fintovcycisorational} is that if $G$ satisfies the Farrell-Jones conjecture, the functor $\textbf{K} k -$ actually satisfies finite assembly. Since $\textbf{K} k H$ is connective for finite groups $H$, a fact that will be discussed later at the beginning of Section \ref{lowerktheory}, see Proposition \ref{negativektheoryfield}, this implies that  $K_0 k-$ satisfies finite assembly too, in the sense that 
$$K_0 k G = \text{colim}_{G/H \in \text{Or}G_\text{Fin}} K_0 k H$$
with the colimit in question being relative to the $1$-category of abelian groups. This is because the colimit
$$ \text{colim}_{G/H \in \text{Or}G_\text{Fin}} \textbf{K} k H \simeq \textbf{K} k G $$
agrees with the colimit taken in connective spectra $\text{Sp}_{\geq 0}$, as $\text{Sp}_{\geq 0}$ is closed under colimits in $\text{Sp}$, and $\pi_0$ is left adjoint to the inclusion
$ \text{Ab} \rightarrow \text{Sp}_{\geq 0}.$
\end{remark}

\section{The Whitehead spectrum $\textbf{Wh}(R; G)$ and the spectrum $\textbf{SC}(G)$} \label{whiteheadspectrum}

We remind the reader that for any ring $R$, $\widetilde{K_{0}}R$ is defined as the cokernel of the natural homomorphism
$ K_0 \mathbb{Z} \rightarrow K_0 R$. For a group ring $RG$ the group $K_0 RG$ naturally has $K_0 R$ as a split summand, with the split given via the augmentation map $RG \rightarrow R$ that sends all $g \in G$ to $1$.
If the base ring $R$ is such that every projective module is stably free, such as when $R$ is a PID or a local ring, it follows that $K_0 R \cong K_0 \mathbb{Z} \cong \mathbb{Z}$, and we have $K_0 RG \cong \mathbb{Z} \oplus \widetilde{ K_0 }RG$. 

We are ultimately interested in understanding the map
$$ \widetilde{K_0 }\mathbb{Z}G \rightarrow \widetilde{K_0}\mathbb{Q}G.$$
Since our tool of choice, the Farrell-Jones conjecture, gives us a priori the full map on spectra $\textbf{K} \mathbb{Z}G \rightarrow \textbf{K} \mathbb{Q}G$, we would like to split off the superfluous data in a sensible way. This is where the Whitehead spectrum comes into play. Recall that $EG$ is the classifying space for the trivial family. We have the equivalence
$$ EG \otimes_{\text{Or} G} \textbf{K}R \simeq \text{colim}_{BG} \textbf{K}R \simeq BG \otimes \textbf{K}R $$
where we used that the subcategory of $\text{Or}G$ generated by the single object $G/\set{1}$ is an $\infty$-groupoid equivalent to $BG$ and that the action of $G$ on the value $\textbf{K}R(G/\set{1}) = \textbf{K}R$ is trivial by Theorem \ref{ktheoryfunctor} $(3)$. The natural map $$ EG \rightarrow E(G;\text{All}) \simeq G/G$$ induces the assembly map
$$ BG \otimes \textbf{K}R \simeq EG \otimes_{\text{Or} G} \textbf{K}R \rightarrow G/G \otimes_{\text{Or} G} \textbf{K}R = \textbf{K}R[G].$$

\begin{definition} Given a group $G$ and a ring $R$, we define the \emph{Whitehead spectrum} $\textbf{Wh}(R; G)$ to be the cofiber of the assembly map
$$ BG \otimes \textbf{K}R \rightarrow \textbf{K}R[G] \rightarrow \textbf{Wh}(R; G), $$
corresponding to the trivial family $\text{Triv}$.
\end{definition}

\begin{example}
Let $R$ be a ring. Then:
\begin{itemize}
\item $\pi_{-i} \textbf{Wh}(R; G) = K_{-i}RG$ for $i > 0$, if $R$ is regular noetherian (see \cite{Weibel2013TheKA} III, Definition 4.1.),
\item $\pi_{0} \textbf{Wh}(R; G) = \widetilde{K_0 }R G,$ if $R$ is in addition a local ring or a PID (see \cite{Weibel2013TheKA} II, §2),
\item and furthermore, if $R$ is  a field or the ring of integers, then 
$$\pi_{1} \textbf{Wh}(R;G) = K_1 (RG) / \left\lbrace rg | r \in R^\times, g \in G \right\rbrace. $$
\end{itemize}

In the particular case of $R$ being the integers we have:
\begin{itemize}
\item $\pi_{-1} \textbf{Wh}(\mathbb{Z}; G) = K_{-1} \mathbb{Z} G,$
\item $\pi_{0} \textbf{Wh}(\mathbb{Z}; G) = \widetilde{K_0 }\mathbb{Z} G,$
\item $\pi_{1} \textbf{Wh}(\mathbb{Z}; G) = \text{Wh}(G) = K_{1} \mathbb{Z} G /( \left\lbrace \pm 1 \right\rbrace \otimes G_{ab} ),$ with $\text{Wh}(G)$ being the Whitehead group of $G$.
\end{itemize}
\end{example}

More generally, we can do the following construction.  The functor
$$ (B(-) \otimes \textbf{K}R )(G/H) \defeq (G/H \times EG) \otimes_{\text{Or} G} \textbf{K}R $$
from $\text{Or}G$ to spectra comes with a natural transformation 
$$\theta  \colon  B(-) \otimes \textbf{K}R \rightarrow \textbf{K}R(-), $$
induced from the projection $G/H \times EG \rightarrow G/H$, to the functor $\textbf{K}R$. Let us explain why the notation $B(-) \otimes \textbf{K}R$ makes sense. The value at $G/G$ is given as
$$ EG \otimes_{\text{Or} G} \textbf{K}R \simeq BG \otimes \textbf{K}R. $$
This means that the natural transformation $\theta$ becomes the assembly map
$$ BG \otimes \textbf{K}R \rightarrow \textbf{K}RG $$
when evaluated at $G/G$. If $G/H$ is an arbitrary object of $\text{Or}G$, then $G/H \times E(G;\text{Triv}) \simeq \text{Ind}_H^G(E(H;\text{Triv}))$. We can use the projection formula, Proposition \ref{projectionformula}, to get
\begin{align*}
(G/H \times EG) \otimes_{\text{Or} G} \textbf{K}R &= \text{Ind}_H^G(E(H;\text{Triv})) \otimes_{\text{Or} G} \textbf{K}R \\
 & \simeq E(H;\text{Triv}) \otimes_{\text{Or}H} \textbf{K}R \simeq BH \otimes \textbf{K}R.
\end{align*}
and see that $\theta$ has as component on the object $G/H$ the assembly map
$$BH \otimes \textbf{K}R \rightarrow \textbf{K}RH. $$
This allows us to define the functor $\textbf{Wh}(R; -)  \colon  \text{Or}G \rightarrow \text{Sp}$ as the cofiber of this natural transformation.

\begin{remark}
More generally, if $\mathcal{F}$ is a family and $\textbf{A}  \colon  \text{Or}G \rightarrow \text{Sp}$ is a functor, we can define
$$\textbf{A}_\mathcal{F}(G/H) \defeq (G/H \times E\mathcal{F}) \otimes_{\text{Or} G} \textbf{A} $$
and get via the projection $G/H \times E\mathcal{F} \rightarrow G/H$ a natural transformation
$$\textbf{A}_\mathcal{F} \rightarrow \textbf{A}.$$
The functor $\textbf{A}_\mathcal{F}$ can be shown to satisfy $\mathcal{F}$-assembly, and we can think of it as a universal approximation of $\textbf{A}$ from the left by a functor that satisfies $\mathcal{F}$-assembly. This construction appears for example in \cite{Davis_Quinn_Reich_2011}, Lemma 4.1. 
\end{remark}

The following is an essential lemma that is a consequence of Theorem \ref{fintovcycisorational} and Lemma \ref{fintovcycissplit}.

\begin{lemma} \label{whiteheadsplitinj}
The map
$$ E\text{Fin} \otimes_{\text{Or}G} \textbf{Wh}(R;-) \rightarrow E\text{VCyc} \otimes_{\text{Or}G} \textbf{Wh}(R;-) $$
is split injective and is so naturally with respect to the ring $R$ and the group $G$.
\end{lemma}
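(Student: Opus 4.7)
The plan is to apply $E\mathcal{F}_+ \wedge_{\textup{Or}G}(-)$ to the defining cofiber sequence of functors $\textup{Or}G \to \textup{Sp}$,
$$B(-)_+ \wedge \textbf{K}R \Rightarrow \textbf{K}R(-) \Rightarrow \textbf{Wh}(R;-),$$
for both $\mathcal{F} = \textup{Fin}$ and $\mathcal{F} = \textup{VCyc}$. Because $E\mathcal{F}_+ \wedge_{\textup{Or}G}(-)$ is exact (it is a colimit-like construction in the stable $\infty$-category $\textup{Sp}$), this produces a commuting diagram whose two rows are cofiber sequences of spectra and whose vertical maps are induced by the inclusion $E\textup{Fin} \hookrightarrow E\textup{VCyc}$. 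The task is to show the rightmost vertical is split injective.

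First I analyze the leftmost vertical. Unfolding $B(H)_+ \wedge \textbf{K}R = (G/H \times EG)_+ \wedge_{\textup{Or}G} \textbf{K}R$ and interchanging the two nested coends (Fubini, making use of the projection formula of Theorem \ref{projectionformula}) yields a natural equivalence
$$E\mathcal{F}_+ \wedge_{\textup{Or}G}\bigl(B(-)_+ \wedge \textbf{K}R\bigr) \simeq (E\mathcal{F} \times EG)_+ \wedge_{\textup{Or}G} \textbf{K}R.$$
The $G$-space $E\mathcal{F} \times EG$ has only trivial isotropy (because $EG$ does) and is non-equivariantly contractible (because $EG$ is), so it is a model for $E(G;\textup{Triv})$, \emph{independently of} $\mathcal{F}$. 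Consequently $E\textup{Fin} \times EG \to E\textup{VCyc} \times EG$ is a $G$-equivalence, and the leftmost vertical is an equivalence, with both sides identified with $BG_+ \wedge \textbf{K}R$.

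For the middle vertical, Theorem \ref{fintovcycissplit} directly supplies a split injection with a retraction natural in $R$ and $G$. The final step is a formal argument in the stable $\infty$-category $\textup{Sp}$: in a map of cofiber sequences $(A \to B \to C) \Rightarrow (A' \to B' \to C')$ with $A \to A'$ an equivalence and $B \to B'$ split injective, $C \to C'$ is split injective. Indeed, passing to vertical cofibers produces a cofiber sequence $0 \simeq \textup{cofib}(A \to A') \to \textup{cofib}(B \to B') \to \textup{cofib}(C \to C')$, so the latter two cofibers are equivalent; the section $\textup{cofib}(B \to B') \to B'$ provided by the splitting, composed with $B' \to C'$ and then with $C' \to \textup{cofib}(C \to C')$, realizes the equivalence of cofibers and hence gives a section of $C' \to \textup{cofib}(C \to C')$, which exhibits $C \to C'$ as split injective. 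Naturality in $R$ and $G$ is preserved because every input to this construction is natural in $R$ and $G$.

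The main obstacle is the Fubini identification and the recognition of $E\mathcal{F} \times EG$ as a model for $E(G;\textup{Triv})$; once these are verified, the reduction to Theorem \ref{fintovcycissplit} and the concluding diagram chase in $\textup{Sp}$ are purely formal.
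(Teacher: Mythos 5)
Your proposal is correct and takes essentially the same route as the paper: apply the exact functor $E\mathcal{F}_+ \wedge_{\text{Or}G}(-)$ to the defining cofiber sequence of $\textbf{Wh}(R;-)$ for $\mathcal{F} = \text{Fin}$ and $\mathcal{F} = \text{VCyc}$, observe the resulting top vertical is an equivalence (both sides being $BG_+ \wedge \textbf{K}R$), invoke Theorem \ref{fintovcycissplit} for the middle vertical, and conclude by a formal diagram chase in $\text{Sp}$. The paper's proof is terser (it writes the top row as the identity on $BG_+ \wedge \textbf{K}R$ without explicitly invoking the projection formula or the identification $E\mathcal{F} \times EG \simeq E(G;\text{Triv})$, and compresses the final step into ``a natural split in the middle map induces one on the bottom''), so your proposal is a correctly fleshed-out version of the same argument rather than a genuinely different one.
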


\begin{proof}
We have the commutative diagram
$$\xymatrix{
BG \otimes \textbf{K}R \ar[r]^= \ar[d] & BG \otimes \textbf{K}R \ar[d] \\
E\text{Fin} \otimes_{\text{Or}G} \textbf{K}R \ar[r] \ar[d] & E\text{VCyc} \otimes_{\text{Or}G} \textbf{K}R \ar[d] \\
 E\text{Fin} \otimes_{\text{Or}G} \textbf{Wh}(R;-) \ar[r] & E\text{VCyc} \otimes_{\text{Or}G} \textbf{Wh}(R;-)
}$$
with the columns being fiber sequences, so a natural split in the middle map induces one on the bottom. Hence, the statement follows from theorem \ref{fintovcycissplit}.
\end{proof}

An immediate consequence of Lemma \ref{twooutofthreeassembly} is that if $G$ satisfies the Farrell-Jones conjecture, the functor $\textbf{Wh}(R;-)$ on $\text{Or}G$ satisfies assembly for the family VCyc. moreover, Theorem \ref{fintovcycisorational} implies that for $R = \mathbb{Q}$, the functor $\textbf{Wh}(\mathbb{Q};-)$ then also satisfies assembly for the family Fin.

\begin{corollary} \label{plocallemma}
If $G$ satisfies the Farrell-Jones conjecture, then the image of the map
$$\widetilde{K_0 }\mathbb{Z} G \rightarrow \widetilde{K_0 }\mathbb{Q} G $$
agrees with the image of the map
\begin{equation*} \pi_0( E \text{Fin} \otimes_{\text{Or}G} \textbf{Wh}(\mathbb{Z},- )) \rightarrow \pi_0( E \text{Fin} \otimes_{\text{Or}G} \textbf{Wh}(\mathbb{Q},- ) ) \cong \widetilde{K_0 }\mathbb{Q} G. \tag{$\star$}
\end{equation*}
In particular if $( \star )$ vanishes $p$-locally for some prime $p$, then so does the map
$$ \widetilde{K_0 }\mathbb{Z} G \rightarrow \widetilde{K_0 }\mathbb{Q} G.$$
\end{corollary}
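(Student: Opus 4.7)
The plan is to build two naturality squares and then do a short diagram chase, using the Farrell--Jones assumption for $G$, Lemma \ref{fintovcycisorational} (which lets rational $\textbf{K}$-assembly collapse from $\text{VCyc}$ to $\text{Fin}$), and Lemma \ref{whiteheadsplitinj} (a splitting for $\textbf{Wh}(R;-)$ that is natural in the ring $R$).

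First I would upgrade the relevant assembly statements from $\textbf{K}R$ to $\textbf{Wh}(R;-)$ using Lemma \ref{twooutofthreeassembly} applied to the defining cofiber sequence $BG_+ \wedge \textbf{K}R \to \textbf{K}R(-) \to \textbf{Wh}(R;-)$: the functor $B(-)_+ \wedge \textbf{K}R$ is trivially $\text{Triv}$-assembled (hence also $\text{Fin}$- and $\text{VCyc}$-assembled). Under (FJ) this gives $E\text{VCyc}_+ \wedge_{\text{Or}G} \textbf{Wh}(\mathbb{Z};-) \simeq \textbf{Wh}(\mathbb{Z};G)$, and combining (FJ) with Lemma \ref{fintovcycisorational} gives the stronger equivalence $E\text{Fin}_+ \wedge_{\text{Or}G} \textbf{Wh}(\mathbb{Q};-) \simeq \textbf{Wh}(\mathbb{Q};G)$, as noted in Remark \ref{rationalktheoryfiniteassembly}. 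The identifications $\pi_0 \textbf{Wh}(\mathbb{Z};G) \cong \widetilde{K_0 \mathbb{Z}G}$ and $\pi_0 \textbf{Wh}(\mathbb{Q};G) \cong \widetilde{K_0 \mathbb{Q}G}$ are already recorded in the example in Section \ref{whiteheadspectrum}.

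The heart of the argument is the commutative square of spectra
\[
\begin{tikzcd}
E\text{Fin}_+ \wedge_{\text{Or}G} \textbf{Wh}(\mathbb{Z};-) \arrow[r,"\alpha"] \arrow[d,"\beta"'] & E\text{Fin}_+ \wedge_{\text{Or}G} \textbf{Wh}(\mathbb{Q};-) \arrow[d,"\delta"] \\
\textbf{Wh}(\mathbb{Z};G) \arrow[r,"\gamma"'] & \textbf{Wh}(\mathbb{Q};G),
\end{tikzcd}
\]
in which $\delta$ is the equivalence produced above, the map $\beta$ is split injective by Lemma \ref{whiteheadsplitinj}, and the splitting is natural in $R$. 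This yields retractions $r_{\mathbb{Z}}, r_{\mathbb{Q}}$ of $\beta, \delta$ fitting into a second commutative square with the same $\gamma$ and $\alpha$; moreover $r_{\mathbb{Q}} \simeq \delta^{-1}$ because $\delta$ is an equivalence. Passing to $\pi_0$, the first square gives $\delta_{*}(\textup{im}\,\alpha_*) \subseteq \textup{im}\,\gamma_*$, and the second, rewritten as $\delta_*^{-1} \circ \gamma_* = \alpha_* \circ r_{\mathbb{Z},*}$, gives $\delta_*^{-1}(\textup{im}\,\gamma_*) \subseteq \textup{im}\,\alpha_*$. Together these force $\textup{im}\,\gamma_* = \delta_*(\textup{im}\,\alpha_*)$, which is the claim about images.

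Finally, the $p$-local statement is immediate: $p$-localization is exact and preserves the cofiber sequences, the natural splitting and the equivalence $\delta$, so localizing the whole square at $p$ and repeating the diagram chase gives $\textup{im}\,(\gamma_{(p)})_* = \delta_{(p),*}(\textup{im}\,(\alpha_{(p)})_*)$, and vanishing of $(\star)_{(p)}$ forces vanishing of the $p$-localized map on $\widetilde{K_0}$. The main point to verify carefully is the naturality in $R$ of the retraction from Lemma \ref{whiteheadsplitinj}; this should follow because the splitting ultimately comes from Theorem \ref{fintovcycissplit}, whose statement already asserts naturality in $R$, and the construction of $\textbf{Wh}(R;-)$ as a cofiber of a natural transformation preserves that naturality on the quotient.
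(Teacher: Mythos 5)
Your proof is correct and follows essentially the same strategy as the paper's: both exploit the natural (in $R$) split-injectivity from Lemma \ref{whiteheadsplitinj}, the identification of $\pi_0 \textbf{Wh}(R;G)$ with the reduced $K_0$-groups, the fact that $\delta$ is an equivalence thanks to Lemma \ref{fintovcycisorational} combined with (FJ), and exactness of $p$-localization. The paper phrases it as a direct-sum decomposition of the map while you unwind the same structure into an explicit diagram chase with retractions, but the content is identical.
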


\begin{proof}
By Lemma \ref{whiteheadsplitinj} the group $\pi_0( E \text{Fin} \otimes_{\text{Or}G} \textbf{Wh}(\mathbb{Z},- ))$ is a split summand of $\widetilde{K_0 }\mathbb{Z} G$, similarly for $\mathbb{Q}$, so by naturality of the split with respect to ring homomorphisms, the map $ \widetilde{K_0 }\mathbb{Z} G \rightarrow \widetilde{K_0 }\mathbb{Q} G$ splits as a sum of two maps, the second of which has to be trivial, since $\pi_0( E \text{Fin} \otimes_{\text{Or}G} \textbf{Wh}(\mathbb{Q},- ) ) \cong \widetilde{K_0 }\mathbb{Q} G $. To get the second statement, apply the functor $\mathbb{Z}_{(p)}\otimes -$ and use exactness.
\end{proof}

\begin{definition} \label{singularcharactersdefinition}
Define the \emph{spectrum of singular characters} $\textbf{SC}(G)$ as the cofiber
$$ \textbf{Wh}(\mathbb{Z};G ) \rightarrow \textbf{Wh}(\mathbb{Q};G ) \rightarrow \textbf{SC}(G).$$
Write in short $\text{SC}(G) \defeq \pi_0 \textbf{SC}(G).$
\end{definition}

Note that we always have a long exact sequence
$$\begin{tikzcd}
\cdots \arrow[r] & \text{Wh}(G)  \arrow[r] & K_1 ( \mathbb Q G) / \left\lbrace rg | r \in \mathbb Q^\times, g \in G \right\rbrace \arrow[r]
\arrow[d, phantom, ""{coordinate, name=Z}]
& \pi_1 \textbf{SC}(G) \arrow[dll,rounded corners=8pt,curvarr=Z] 
\\
 & \widetilde{K_0 }\mathbb{Z} G  \arrow[r]
&  \widetilde{K_0 }\mathbb{Q} G \arrow[r]\arrow[d, phantom, ""{coordinate, name=W}]
& \text{SC}(G) 
\arrow[dll,rounded corners=8pt,curvarr=W] 
\\
 & \ K_{-1} \mathbb{Z} G \arrow[r] & \cdots & {}
\end{tikzcd}$$

From this we see that the vanishing of the map $\widetilde{K_0 }\mathbb{Z} G \rightarrow \widetilde{K_0 }\mathbb{Q} G$ is equivalent to the injectivity of $\widetilde{K_0 }\mathbb{Q} G \rightarrow \text{SC}(G)$. We will give a concrete description of the group $\text{SC}(G)$ for finite groups in the next section.

\section{Lower $K$-theory of finite groups} \label{lowerktheory}

The following section will use basics of representation theory, as for example treated in \cite{serre1977linear}. In this section, we will assume $G$ is finite. We  write $\mathbb{Z}_p$ and $\mathbb{Q}_p$ for the $p$-adic integers as well as $p$-adic rationals for a prime $p$. We will be concerned with the groups $K_0 \mathbb{Z}_p G, K_0 \mathbb{Q} G, K_0 \mathbb{Q}_p G$, as well as $K_{-1} \mathbb{Z} G$.

Suppose in the following that $k$ is a subfield of $\mathbb{C}$. It is a standard fact in representation theory that the group ring $kG$ is semisimple. In particular, it decomposes uniquely as
$$ kG \cong \prod_{I \in \text{Irr}_k(G) } M_{n_I \times n_I} (D_I) $$
where $\text{Irr}_k(G)$ is the set of isomorphism classes of irreducible $k$-representations of $G$ and $D_I$ are division algebras given by
$$ D_I = \text{hom}_{G} ( I , I ), $$
and $n_I = \left\langle kG , I \right\rangle$ is the multiplicity of $I$ appearing in the regular representation $kG$. This is known as the \emph{Wedderburn decomposition} of $kG$.
The $K$-theory of a division algebra $D$ is $\mathbb{Z}$ in degree $0$ since every left $D$-module is in fact a $D$-vectorspace and its negative $K$-theory vanishes since $D$ is regular noetherian. Using that $K$-theory commutes with products as well as invariance of $K$-theory under Morita-equivalence, we get the formula $K_0 kG \cong \mathbb{Z}^{r_k(G)}$ where $r_k(G)$ is the number of irreducible $k$-representations of $G$ and the irreducible representations form a set of generators for this group. We also get that $K_{-n} kG = 0$ for all $n>0$.

In the following, if $D$ is a division algebra over a field $F$, we denote by $[D,F]$ the dimension of $D$ as an $F$-vector space.

\begin{definition}[Schur index, see also \cite{dieck2006transformation} 9.3. and \cite{serre1977linear} 12.2.]
Let $I$ be an irreducible $k$-representation of $G$. Then $D_I$ is a division algebra over its center $K_I$ of degree $m(I)^2$ with $m(I) = [D_I, E_I]$ for $E_I$ a maximal field contained in $D_I$. We call $m(I)$ the \emph{Schur index} of $I$.
\end{definition}

Assume $I$ is a $k$-representation of $G$. Recall that the \emph{character} of $I$ is defined as the function
\begin{align*}
\chi_I \colon \text{Conj(G)} &\rightarrow k \\
g &\mapsto \text{tr}( g\cdot  \colon I \rightarrow I ).
\end{align*}
This is well-defined since the trace of an endomorphism is invariant under conjugation. We call a character irreducible if it is obtained as the character of an irreducible representation. For the following Lemma, we emphasize that we use a fixed embedding $k \subset \mathbb{C}$.

\begin{lemma}[\cite{serre1977linear} 12.2., also \cite{isaacs1976character} Corollary 10.2] \label{galoistranslates}
Let $\chi$ be an irreducible complex character. There exists an irreducible $k$-representation $I$, such that 
$$\chi_I = m(I) \sum_\rho \rho(\chi), $$ with the $\rho(\chi)$ being all the distinct translates of $\chi$ under the action of the Galois group $\text{Gal}( \mathbb{C}/ k )$, in other words the sum is over the orbit of the action of the Galois group $\text{Gal}( \mathbb{C}/ k )$ on the set of characters. Conversely, if $I$ is an irreducible $k$-representation, its character splits as above over $\mathbb{C}$.
\end{lemma}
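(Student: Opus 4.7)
The plan is to deduce this from the Wedderburn decomposition of $kG$ by base-changing to $\mathbb{C}$ and tracking how the center of each division algebra $D_I$ splits. First, fix an irreducible $k$-representation $I$, set $D_I = \text{End}_{kG}(I)$, and let $K_I$ be the center of $D_I$, which is a finite extension of $k$. The key structural input is that $\mathbb{C}$ splits any finite-dimensional central simple $K_I$-algebra, so for each of the $[K_I:k]$ embeddings $\sigma \colon K_I \hookrightarrow \mathbb{C}$ extending $k \subset \mathbb{C}$, one has $D_I \otimes_{K_I,\sigma} \mathbb{C} \cong M_{m(I)}(\mathbb{C})$. Consequently the Wedderburn factor $M_{n_I}(D_I)$ of $kG$ becomes $\prod_\sigma M_{n_I m(I)}(\mathbb{C})$ after tensoring with $\mathbb{C}$.

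The next step is to use this to decompose $I \otimes_k \mathbb{C}$ as a $\mathbb{C}G$-module. Each factor $M_{n_I m(I)}(\mathbb{C})$ corresponds to a simple $\mathbb{C}G$-module $J_\sigma$ of $\mathbb{C}$-dimension $n_I m(I)$, and comparing $\dim_k I = n_I m(I)^2 [K_I:k]$ with the available summands forces $I \otimes_k \mathbb{C} \cong \bigoplus_\sigma J_\sigma^{m(I)}$. Taking characters immediately yields $\chi_I = m(I) \sum_\sigma \chi_{J_\sigma}$, which is the claimed formula once the $J_\sigma$ are identified with the Galois orbit of a single $\chi$.

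That identification is the third step. Since the isomorphism $kG \otimes_k \mathbb{C} \cong \mathbb{C}G$ is $\text{Gal}(\mathbb{C}/k)$-equivariant, the Galois action on the $\mathbb{C}G$-factors coming from $M_{n_I}(D_I)$ factors through its action on the set of embeddings $K_I \hookrightarrow \mathbb{C}$; this action is transitive since $K_I/k$ is a single extension. Hence the simple summands $J_\sigma$ form one Galois orbit, proving the direction starting from $I$. For the converse, starting from an irreducible $\chi$ with module $J_\chi$, collect the Wedderburn factors of $\mathbb{C}G$ corresponding to the Galois translates $\rho(\chi)$; their sum is a $\text{Gal}(\mathbb{C}/k)$-stable two-sided ideal, which by Galois descent comes from a unique Wedderburn factor $M_{n_I}(D_I)$ of $kG$, and the first part applied to this $I$ gives the desired formula.

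The main delicate point is the compatibility of the Galois action on $\mathbb{C}G$ with the permutation of embeddings of $K_I$, which is the whole content of the descent argument; it is standard and treated in \cite{serre1977linear}, 12.2, and \cite{isaacs1976character}, Chapter 10, so I expect to invoke those references rather than reprove them in full detail.
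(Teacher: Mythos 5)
The paper does not prove Lemma \ref{galoistranslates}: it is stated with citations to Serre \S 12.2 and Isaacs Corollary 10.2 and used as a black box, so there is no in-paper argument to compare against. Your sketch is the standard proof from those references and is correct in outline. One step is stated more confidently than your reasoning supports: writing $\dim_k I = n_I m(I)^2 [K_I:k]$ and counting ``available summands'' only tells you that the multiplicities $a_\sigma$ in $I \otimes_k \mathbb{C} \cong \bigoplus_\sigma J_\sigma^{a_\sigma}$ satisfy $\sum_\sigma a_\sigma = m(I)[K_I:k]$, not that each $a_\sigma = m(I)$. The clean way to see $a_\sigma = m(I)$ is to compute directly: $I$ is Morita-equivalent to the column module $D_I^{n_I}$ over $M_{n_I}(D_I)$, so $I \otimes_k \mathbb{C} \cong (D_I \otimes_k \mathbb{C})^{n_I} \cong \prod_\sigma M_{m(I)}(\mathbb{C})^{n_I}$, and $M_{m(I)}(\mathbb{C})^{n_I}$ is $m(I)$ copies of the simple module of $M_{n_I m(I)}(\mathbb{C})$. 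Alternatively, the Galois-equivariance you invoke at the next step already forces the $a_\sigma$ to be constant across the (transitive) Galois orbit of embeddings, at which point the dimension count does pin down the common value; so either reorder your argument or replace the dimension count by the Morita computation. For the converse, replacing $\mathbb{C}$ by the finite Galois splitting field $k(\zeta_{|G|})$ makes the descent argument literal rather than heuristic, but since the statement itself is phrased with $\mathrm{Gal}(\mathbb{C}/k)$ this is a presentational point rather than a gap.
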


\begin{corollary} \label{splittingofplocalization}
The map $K_0 \mathbb{Q} G \rightarrow K_0 \mathbb{Q}_p G$ is acting on the basis of irreducible $\mathbb{Q}$-representations by
$$[I] \mapsto \frac{m(I)}{m(K_i)} ([K_1] + \cdots + [K_{n_I}] ).$$ 
where the $K_i$ are representatives of the irreducible $\mathbb{Q}_p$-representations that appear in $I \otimes \mathbb{Q}_p$ and $n_I$ depends on $I$. The Schur index $m(K_i)$ is independent of $i$.
If $I$ and $J$ are distinct irreducible $\mathbb{Q}$-representations, the irreducible components appearing in their individual $p$-completions are pairwise non-isomorphic. In other words, the map
$$K_0 \mathbb{Q} G \rightarrow K_0 \mathbb{Q}_p G$$
splits as
$$\bigoplus_{ I \in \text{Irr}_\mathbb{Q}(G)} \mathbb{Z} \rightarrow \bigoplus_{ I \in \text{Irr}_\mathbb{Q}(G)} \mathbb{Z} \set{ K \in \text{Irr}_{\mathbb{Q}_p}(G) | K ~\text{appears as a summand in}~ I\otimes \mathbb{Q}_p }. $$
\end{corollary}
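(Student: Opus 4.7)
The plan is to compare characters on both sides of the ring extension $\mathbb{Q} G \to \mathbb{Q}_p G$, using Lemma \ref{galoistranslates} applied twice: once for $k = \mathbb{Q}$ to describe the character of $I$, and once for $k = \mathbb{Q}_p$ (after fixing an embedding $\overline{\mathbb{Q}_p} \hookrightarrow \mathbb{C}$) to describe the characters of the irreducible $\mathbb{Q}_p G$-summands $K_i$ of $I \otimes_{\mathbb{Q}} \mathbb{Q}_p$.

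First I would note that the map $K_0 \mathbb{Q} G \to K_0 \mathbb{Q}_p G$ sends the class $[I]$ to the class of $I \otimes_\mathbb{Q} \mathbb{Q}_p$, which then splits as $\bigoplus_i a_i K_i$ for some multiplicities $a_i$. Since passing from $\mathbb{Q}_p$ to $\mathbb{C}$ does not change the character of a representation as a function $G \to \mathbb{C}$, the character of $I \otimes \mathbb{Q}_p$ equals the character of $I$, namely $m(I) \sum_\rho \rho(\chi)$ as $\rho$ ranges over the $\mathrm{Gal}(\mathbb{C}/\mathbb{Q})$-orbit of some absolutely irreducible character $\chi$. On the other hand, each summand $K_i$ has character $m(K_i)\sum_\sigma \sigma(\psi_i)$ where $\sigma$ ranges over the $\mathrm{Gal}(\mathbb{C}/\mathbb{Q}_p)$-orbit of $\psi_i$. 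Since $\mathrm{Gal}(\mathbb{C}/\mathbb{Q}_p) \subseteq \mathrm{Gal}(\mathbb{C}/\mathbb{Q})$, each $\mathrm{Gal}(\mathbb{C}/\mathbb{Q})$-orbit partitions into $\mathrm{Gal}(\mathbb{C}/\mathbb{Q}_p)$-suborbits; these suborbits are in bijection with the distinct $K_i$ appearing in $I \otimes \mathbb{Q}_p$.

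Comparing the two expressions for the $\mathbb{C}$-character, each absolutely irreducible $\rho(\chi)$ in the orbit appears with multiplicity $m(I)$ on one side and multiplicity $a_i \cdot m(K_i)$ on the other (for the unique $K_i$ whose suborbit contains $\rho(\chi)$). This forces $a_i = m(I)/m(K_i)$. The fact that $m(K_i)$ is independent of $i$ then follows because $\mathrm{Gal}(\mathbb{C}/\mathbb{Q})$ permutes the suborbits transitively, hence permutes the $K_i$; since Galois conjugation preserves Schur indices (it induces an isomorphism of the corresponding division algebras), all $m(K_i)$ agree. This yields the stated formula
\[ [I] \mapsto \tfrac{m(I)}{m(K_i)}([K_1] + \cdots + [K_{n_I}]). \]

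For the disjointness statement: if $I$ and $J$ are non-isomorphic irreducible $\mathbb{Q}$-representations, their absolutely irreducible constituents form disjoint $\mathrm{Gal}(\mathbb{C}/\mathbb{Q})$-orbits. Since the $\mathrm{Gal}(\mathbb{C}/\mathbb{Q}_p)$-suborbits refine these, the sets of $\mathbb{Q}_p$-irreducibles appearing in $I \otimes \mathbb{Q}_p$ and $J \otimes \mathbb{Q}_p$ are disjoint, giving the claimed decomposition of the map. The main technical point is making the interplay between the two Galois actions precise enough to conclude both the coefficient formula and the independence of the Schur indices $m(K_i)$; once one commits to tracking characters under a fixed embedding $\overline{\mathbb{Q}_p} \hookrightarrow \mathbb{C}$, the remaining bookkeeping is routine.
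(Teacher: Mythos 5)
Your proof takes essentially the same route as the paper: apply Lemma~\ref{galoistranslates} twice, once over $\mathbb{Q}$ and once over $\mathbb{Q}_p$, then compare absolutely irreducible constituents via a fixed embedding $\overline{\mathbb{Q}_p}\hookrightarrow\mathbb{C}$. The coefficient formula $a_i=m(I)/m(K_i)$ and the disjointness statement both drop out exactly as you describe, and this is what the paper's one-line proof intends.

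The one place where you reach for more than the paper gives, and where your justification is a bit thin, is the independence of $m(K_i)$ from $i$. The character comparison by itself only produces the identity $a_i\,m(K_i)=m(I)$ for each $i$; it does \emph{not} force the $m(K_i)$ to coincide. Your appeal to ``Galois conjugation preserves Schur indices (it induces an isomorphism of the corresponding division algebras)'' gestures at the right mechanism, but note that elements of $\mathrm{Gal}(\mathbb{C}/\mathbb{Q})$ do not preserve the subfield $\mathbb{Q}_p\subset\mathbb{C}$, so they do not literally act on $\mathbb{Q}_pG$-modules or on the local division algebras. What is really being used is the fact that, because $\mathbb{Q}(\chi)/\mathbb{Q}$ is abelian, $\mathrm{Gal}(\mathbb{Q}(\chi)/\mathbb{Q})$ acts transitively on the primes of $\mathbb{Q}(\chi)$ above $p$, together with the Benard--Schacher theorem: for Brauer classes in the Schur subgroup, the orders of the local Hasse invariants at Galois-conjugate primes coincide. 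This is a genuine theorem about cyclotomic algebras, not a formal consequence of twisting. The paper itself simply asserts independence in the statement (and again in the definition of $m_p$) without proof, so your version is, if anything, slightly more explicit about what is needed; just be aware that the parenthetical you offer is not yet a proof of it.
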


\begin{proof}
Split the character $\chi_I$ as in Lemma \ref{galoistranslates} for the case $k = \mathbb{Q}$, then apply the Lemma \ref{galoistranslates} to each of the irreducible constituents appearing, using the case $k = \mathbb{Q}_p$.
\end{proof}

\begin{definition}[Local Schur index]
Let $I$ be an irreducible $\mathbb{Q}$-representation of $G$. The \emph{local Schur index} $m_p(I)$ of $I$ at the prime $p$ is defined to be the Schur index of any of the irreducible components of $I \otimes \mathbb{Q}_p$ and is independent of this choice by the argument given above. The \emph{local Schur index at infinity} $m_\infty (I)$ is similarly defined as the Schur index of any of the irreducible components of $I \otimes \mathbb{R}$. 
\footnote{As a good summary of what is currently known about rational and local Schur indices we recommend \cite{Unger_2019}.}
\end{definition}

\subsection{Negative $K$-theory of finite groups}

We are now concerned with collecting results on the groups $K_{-i} R G$ for a ring $R$.

\begin{proposition} \label{negativektheoryfield}
Let $G$ be a finite group and let $k$ be a field of characteristic $0$. Then $K_{-i} k G$ vanishes for $i > 0$.
\end{proposition}
We note that this implies in particular that the spectrum $\textbf{Wh}( k; G )$ is connective, as it has the same negative homotopy groups as $\textbf{K} k G$.
\begin{proof}
This is immediate from the decomposition
$$ kG \cong \prod_{I \in \text{Irr}_k(G) } M_{n_I \times n_I} (D_I) $$
with division algebras $D_I$, together with the fact that negative $K$-theory of (left) regular Noetherian rings (such as division algebras) vanishes, \cite{weibel1995introduction} III.4.
\end{proof}
Next we shall be concerned with the case $R = \mathbb{Z}$. The following result on negative $K$-theory of group rings is due to Carter.

\begin{theorem}[Carter, \cite{carternegktheory}] \label{negativektheory}
Let $G$ be finite. The groups $K_{-i} \mathbb{Z} G$ vanish for $i > 1$ and the group $K_{-1} \mathbb{Z} G$ has the form
$$K_{-1} \mathbb{Z} G = \mathbb{Z}^r \oplus (\mathbb{Z}/2)^s$$
where
$$ r = 1 - r_\mathbb{Q} + \sum_{p | ~ |G|}( r_{\mathbb{Q}_p} - r_{\mathbb{F}_p})$$
and $s$ is equal to the number of irreducible $\mathbb{Q}$-representations $I$ with even Schur index $m(I)$ but odd local Schur index $m_p(I)$ at every prime $p$ dividing the order of $G$. 
\end{theorem}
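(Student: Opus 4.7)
The plan is to read the groups $K_{-i} \mathbb{Z} G$ off of the Mayer--Vietoris long exact sequence associated to the arithmetic pullback square
$$\begin{tikzcd}
\mathbb{Z}G \ar[r] \ar[d] & \widehat{\mathbb{Z}}G \ar[d] \\
\mathbb{Q}G \ar[r] & \widehat{\mathbb{Q}}G,
\end{tikzcd}$$
where $\widehat{\mathbb{Z}} = \prod_p \mathbb{Z}_p$. This is a Milnor square and the associated long exact sequence in non-connective $K$-theory takes the form
$$\cdots \to K_n \mathbb{Z}G \to K_n \mathbb{Q}G \oplus K_n \widehat{\mathbb{Z}}G \to K_n \widehat{\mathbb{Q}}G \to K_{n-1} \mathbb{Z}G \to \cdots.$$
For primes $p \nmid |G|$, Maschke makes $\mathbb{F}_p G$ semisimple, so $\mathbb{Z}_p G$ is a maximal order and the map $K_\ast \mathbb{Z}_p G \to K_\ast \mathbb{Q}_p G$ is an equivalence; such primes contribute nothing, and only the finitely many $p \mid |G|$ remain in the sequence.

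First I would establish the vanishing of negative $K$-theory for the three non-$\mathbb{Z}G$ rings. For $\mathbb{Q}G$ and each $\mathbb{Q}_p G$ the Wedderburn argument already used in this section shows $K_{-i} = 0$ for $i > 0$. For $\mathbb{Z}_p G$, completeness and semilocality give $K_0 \mathbb{Z}_p G \cong K_0 \mathbb{F}_p G = \mathbb{Z}^{r_{\mathbb{F}_p}}$ by idempotent lifting, and a standard argument (Bass--Heller--Swan together with vanishing of $NK$ for complete local coefficient rings) yields $K_{-i} \mathbb{Z}_p G = 0$ for $i > 0$. Feeding this into the Mayer--Vietoris sequence immediately gives $K_{-i} \mathbb{Z}G = 0$ for $i > 1$, and identifies $K_{-1} \mathbb{Z}G$ with the cokernel of
$$\alpha \colon K_0 \mathbb{Q}G \oplus \bigoplus_{p \mid |G|} K_0 \mathbb{Z}_p G \longrightarrow \bigoplus_{p \mid |G|} K_0 \mathbb{Q}_p G.$$
The free rank of this cokernel is an Euler characteristic count: combining $\text{rk}\, K_0 \mathbb{Z}G = 1$ (Swan's finiteness of $\widetilde{K_0 \mathbb{Z}G}$) with the ranks $r_\mathbb{Q}, r_{\mathbb{F}_p}, r_{\mathbb{Q}_p}$ already recorded, one recovers $r = 1 - r_\mathbb{Q} + \sum_{p \mid |G|}(r_{\mathbb{Q}_p} - r_{\mathbb{F}_p})$.

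The main obstacle, and the technical heart of Carter's argument, is identifying the torsion of $\text{coker}(\alpha)$ as $(\mathbb{Z}/2)^s$. By Corollary \ref{splittingofplocalization}, the map $K_0 \mathbb{Q}G \to \bigoplus_{p\mid|G|} K_0 \mathbb{Q}_p G$ is block diagonal with respect to irreducible $\mathbb{Q}$-representations $I$, with the $I$-block sending $[I]$ to $\sum_p (m(I)/m_p(I)) \sum_i [K_i^{(p)}]$, where the $K_i^{(p)}$ are the constituents of $I \otimes \mathbb{Q}_p$; the factors $\bigoplus_{p\mid|G|} K_0 \mathbb{Z}_p G$ fill in, via the decomposition map, integral lattices compatible with this block decomposition. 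A Smith-normal-form reduction on each $I$-block shows that the torsion contribution is controlled by the $2$-adic valuations $v_2(m(I))$ and $v_2(m_p(I))$: invoking the Hasse--Brauer--Noether reciprocity that forces the invariants of $D_I$ to sum to $0$ in $\mathbb{Q}/\mathbb{Z}$ (so any excess even ramification necessarily concentrates at the infinite place), the $I$-block contributes $\mathbb{Z}/2$ precisely when $m(I)$ is even while every $m_p(I)$ for $p \mid |G|$ is odd, and contributes nothing otherwise. Summing over such $I$ yields the asserted $(\mathbb{Z}/2)^s$, completing the identification.
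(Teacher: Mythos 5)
The paper does not prove this theorem; it is quoted directly from Carter's paper \cite{carternegktheory}, with the Karoubi pullback (Lemma~\ref{karoubisquare}) and its Mayer--Vietoris consequences appearing later in the paper but never re-deriving the formula for $K_{-1}\mathbb Z G$. So the comparison is between your sketch and Carter's published proof, not an internal one.

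Your reduction is the right starting point and is essentially Carter's: the arithmetic square gives a long exact sequence; $\mathbb{Q}G$, $\mathbb{Q}_pG$ are semisimple and $\mathbb{Z}_pG$ is a complete semi-local order, killing negative $K$-theory on the three outer corners; this gives vanishing in degrees $\le -2$ and identifies $K_{-1}\mathbb Z G$ with $\operatorname{coker}\alpha$; and the rank count via an Euler characteristic using $\operatorname{rk}K_0\mathbb Z G=1$ (Swan) recovers the stated $r$. One small correction: for $p\nmid|G|$ the rings $\mathbb Z_pG$ and $\mathbb Q_pG$ do \emph{not} have equivalent $K$-theory in all degrees (already $K_1$ differs), only $K_0$ and negative $K$-groups agree; this suffices for your purposes but should not be stated as a full equivalence. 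Likewise, to invoke Karoubi honestly you should use the finite product over $p\mid|G|$, as the paper does, rather than the full adelic ring, since $K_0\widehat{\mathbb Q}G$ is not literally $\bigoplus_p K_0\mathbb Q_pG$.

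The genuine gap is in the torsion computation, which you flag as the technical heart but then dispatch with a false premise. You assert that $\bigoplus_p K_0\mathbb Z_pG$ maps into $\bigoplus_p K_0\mathbb Q_pG$ through ``integral lattices compatible with'' the block decomposition of $K_0\mathbb Q_pG$ by irreducible $\mathbb Q$-representations. The decomposition map $K_0\mathbb Z_pG\to K_0\mathbb Q_pG$ does \emph{not} respect this partition. Take $G=C_p$ for $p$ odd: $\mathbb F_pG$ is local, so $K_0\mathbb Z_pG\cong\mathbb Z$ generated by $[\mathbb Z_pG]$, and the image in $K_0\mathbb Q_pG\cong\mathbb Z^2$ is the diagonal, whereas the two $\mathbb Q_p$-irreducibles come from two different $\mathbb Q$-irreducibles (trivial and the $(p-1)$-dimensional one). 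More generally, Brauer blocks of $\mathbb Z_pG$ routinely contain $\mathbb Q_p$-constituents of several distinct rational irreducibles, so there is no ``$I$-block'' of $\alpha$ on which to run Smith normal form. Carter's actual argument interposes a maximal order $\Gamma_p\supset\mathbb Z_pG$, shows that the difference is torsion-free, and thereby reduces the torsion of $\operatorname{coker}\alpha$ to a computation that genuinely factors over the Wedderburn components of $\mathbb Q_pG$; only \emph{then} does Hasse--Brauer--Noether reciprocity enter to produce a $\mathbb Z/2$ exactly when $m(I)$ is even and all finite local indices at $p\mid|G|$ are odd. Your appeal to reciprocity is the right final step, but the reduction that makes the block-by-block analysis legitimate is missing, and it is precisely the content of Carter's proof.
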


We note that it can be shown that the smallest group $G$ such that $s>0$ is the group $Q_{16}$. Its negative $K$-theory will be computed in Section \ref{generalizedquaternion16}.

\begin{remark} \label{pgroupshaveonlytorsion}
If $G$ is a $p$-group, then by Magurn, \cite{magurn_negative} Theorem 1, the rank $r = 0$, in other words $K_{-1} \mathbb{Z} G$ only consists of $2$-torsion.
\end{remark}

\subsection{Localization squares for finite groups}

In the following, we will reinterpret some of the techniques that Carter used to prove Theorem \ref{negativektheory} in a more modern light. First, we need the following lemma, which is a consequence of a theorem due to Karoubi, see e.g. \cite{Weibel2013TheKA}, Prop. V.7.5. and Example V.7.5.1.

\begin{lemma}  \label{karoubisquare}
Let $G$ be a finite group and $P$ a finite set of primes. Then the following square is a pullback square of spectra:
$$ 
\xymatrix{ 
\textbf{K} \mathbb{Z} G \ar[r] \ar[d] & \bigvee_{p \in P} \textbf{K} \mathbb{Z}_p G \ar[d] \\
\textbf{K} \mathbb{Z}[P^{-1}] G \ar[r] & \bigvee_{p \in P} \textbf{K} \mathbb{Q}_p G,
}$$
where the maps appearing are induced by the corresponding inclusions of the involved rings.
\end{lemma}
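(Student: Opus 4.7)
The plan is to prove the square is cartesian by identifying its two vertical fibers via localization and devissage, and then showing that the natural map between them is an equivalence. Let $S \subset \mathbb{Z}$ be the multiplicative subset generated by the primes in $P$, so that $\mathbb{Z}[P^{-1}]G = S^{-1}(\mathbb{Z}G)$. Since $S$ is central in $\mathbb{Z}G$, the localization theorem for non-connective $K$-theory (Thomason--Trobaugh, or Schlichting's version for stable $\infty$-categories) produces a fiber sequence
$$ \textbf{K}(\textup{Perf}_{S\textup{-tors}}\, \mathbb{Z}G) \to \textbf{K}\mathbb{Z}G \to \textbf{K}\mathbb{Z}[P^{-1}]G, $$
where $\textup{Perf}_{S\textup{-tors}}\, \mathbb{Z}G$ denotes the stable subcategory of perfect $\mathbb{Z}G$-complexes whose homology is $S$-torsion. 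Applying the same theorem with $S = \set{p^n}$ to $\mathbb{Z}_pG$ for each $p \in P$ and taking the wedge gives that the right vertical fiber equals $\bigvee_{p \in P}\textbf{K}(\textup{Perf}_{p\textup{-tors}}\, \mathbb{Z}_pG)$.

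Next I would split the left-hand fiber by primary components. Any finitely generated $S$-torsion $\mathbb{Z}G$-module has finite order and so decomposes canonically as the direct sum of its $p$-primary parts for $p \in P$. This extends to an equivalence of stable $\infty$-categories $\textup{Perf}_{S\textup{-tors}}\, \mathbb{Z}G \simeq \bigoplus_{p \in P} \textup{Perf}_{p\textup{-tors}}\, \mathbb{Z}G$, and hence identifies the left fiber with $\bigvee_{p \in P} \textbf{K}(\textup{Perf}_{p\textup{-tors}}\, \mathbb{Z}G)$.

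The last step is to compare each summand with its $p$-adic counterpart. Any $p$-power torsion module over $\mathbb{Z}G$ is annihilated by some $p^n$, so its module structure factors through $\mathbb{Z}G/p^n \cong \mathbb{Z}_pG/p^n$, using that $\mathbb{Z}/p^n \cong \mathbb{Z}_p/p^n$. Restriction of scalars along $\mathbb{Z}G \to \mathbb{Z}_pG$ therefore induces an equivalence of abelian categories between finitely generated $p$-power torsion modules over the two rings, which upgrades to an equivalence $\textup{Perf}_{p\textup{-tors}}\, \mathbb{Z}G \simeq \textup{Perf}_{p\textup{-tors}}\, \mathbb{Z}_pG$ on perfect complexes with such homology. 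This equivalence is manifestly compatible with the map of localization sequences induced by $\mathbb{Z}G \to \bigvee_{p \in P} \mathbb{Z}_pG$ in the original square, so the two vertical fibers agree naturally, proving the square is cartesian.

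The main obstacle is the careful formulation of the non-connective $K$-theory localization sequence in our setting: since $\mathbb{Z}G$ is not in general regular, it is essential to work with perfect complexes (rather than just finitely generated projective modules) in order to obtain a genuine fiber sequence that also captures the negative $K$-groups we care about. Once that framework is in place, the primary decomposition and the identification of $p$-power torsion modules over $\mathbb{Z}G$ with those over $\mathbb{Z}_pG$ are classical devissage-type arguments, but one must still verify that the two comparison equivalences fit together functorially in a way that realizes the natural vertical maps of the square.
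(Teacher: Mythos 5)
Your overall strategy---take vertical fibers via the localization sequence for perfect complexes, decompose the $S$-torsion category into $p$-primary pieces, and match each piece against its $p$-adic counterpart---is exactly the content of Karoubi's theorem, which the paper invokes by citing Weibel, Prop.\ V.7.5, without reproving it. So you are reconstructing the right argument. But the last step contains a genuine gap, and the word ``devissage'' is hiding it.

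You pass from an equivalence of the abelian categories of finitely generated $p$-power torsion modules over $\mathbb{Z}G$ and over $\mathbb{Z}_pG$ to an equivalence of the categories $\mathrm{Perf}_{p\text{-tors}}(\mathbb{Z}G) \simeq \mathrm{Perf}_{p\text{-tors}}(\mathbb{Z}_pG)$, and call this a ``classical devissage-type argument.'' Devissage, however, is a statement about $K$-theory of abelian categories ($G$-theory), and $\mathrm{Perf}_{p\text{-tors}}(\mathbb{Z}G)$ is not the derived category of any such abelian category: $\mathbb{Z}G$ is not regular (already $\mathbb{Z}[C_p]$ has infinite global dimension), so most $p$-torsion $\mathbb{Z}G$-modules have infinite projective dimension and are not objects of $\mathrm{Perf}(\mathbb{Z}G)$ at all, and $\mathrm{Perf}_{p\text{-tors}}(\mathbb{Z}G)$ does not inherit a $t$-structure with heart the torsion modules. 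Consequently the abelian-category equivalence does not ``upgrade'' for free.

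The correct mechanism uses that $p$ is a central \emph{non-zero-divisor} on $\mathbb{Z}G$ (it is free over $\mathbb{Z}$), so that $\mathbb{Z}G/p \cong \mathbb{F}_pG$ is itself a perfect complex, realized by $\mathbb{Z}G \xrightarrow{\,p\,} \mathbb{Z}G$, and it generates $\mathrm{Perf}_{p\text{-tors}}(\mathbb{Z}G)$ as a thick subcategory by the Thomason--Neeman theorem on compact objects of localizing subcategories. Base change along $\mathbb{Z}G \to \mathbb{Z}_pG$ sends this generator to the corresponding generator $\mathbb{Z}_pG/p \cong \mathbb{F}_pG$ of $\mathrm{Perf}_{p\text{-tors}}(\mathbb{Z}_pG)$, and the induced map of derived endomorphism spectra is an equivalence because both are computed from the length-one Koszul resolution by $p$ and $\mathbb{Z}G/p^n \cong \mathbb{Z}_pG/p^n$ for all $n$. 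That identification of thick generators together with their endomorphism spectra is what gives the equivalence of torsion categories; the rest of your argument (primary decomposition, compatibility of the two comparison maps) is then fine. In short: right architecture, same route the cited reference takes, but replace the appeal to devissage with a generation-by-$R/p$ argument, and make explicit that the non-zero-divisor hypothesis is carrying the load.
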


\begin{corollary} \label{whiteheadlocalization}
Fix the same assumptions as in the previous lemma. Then
$$ 
\xymatrix{
\textbf{Wh}( \mathbb{Z}; G) \ar[r] \ar[d] & \bigvee_{p \in P} \textbf{Wh}( \mathbb{Z}_p; G ) \ar[d] \\
\textbf{Wh}( \mathbb{Z}[P^{-1}]; G ) \ar[r] & \bigvee_{p \in P} \textbf{Wh}( \mathbb{Q}_p; G )
}$$
is a pullback square of spectra as well.
\end{corollary}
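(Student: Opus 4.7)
The plan is to exhibit the claimed square as the pointwise cofiber of a map between two pullback squares of spectra, and then to invoke the formal fact that in a stable $\infty$-category the cofiber of a map of pullback squares is again a pullback square.

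By definition $\textbf{Wh}(R;G) = \text{cofib}(BG_+ \wedge \textbf{K}R \to \textbf{K}RG)$ is natural in the ring homomorphism $R \to R'$. Hence the Whitehead square in the corollary is the pointwise cofiber of the natural transformation, given by the assembly maps, from the square with corners $BG_+ \wedge \textbf{K}R$ for $R$ ranging over $\mathbb{Z}, \mathbb{Z}_p, \mathbb{Z}[P^{-1}], \mathbb{Q}_p$ (with the wedges over $p \in P$ appearing in the two right-hand corners, where the wedge-product distinction is harmless because $P$ is finite) to the square of Lemma \ref{karoubisquare}. It therefore suffices to check that both the source and the target of this natural transformation are pullback squares, and that the cofiber of any map between two pullback squares of spectra is again a pullback square.

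The target square is a pullback by Lemma \ref{karoubisquare}. For the source square, the same theorem of Karoubi (Weibel, Prop. V.7.5), applied to $\mathbb{Z}$ with the primes in $P$ inverted but without the group, produces a pullback square of the $\textbf{K}$-spectra of the base rings alone. Since $BG_+ \wedge -$ is a left adjoint on $\text{Sp}$ it preserves pushouts, and pushouts coincide with pullbacks in the stable $\infty$-category $\text{Sp}$, so smashing with $BG_+$ sends pullback squares of spectra to pullback squares of spectra; this gives the required source pullback square.

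Finally, a square of spectra is a pullback if and only if the induced map on horizontal cofibers is an equivalence. Given a map between two such pullback squares, an iterated-cofiber (nine-lemma) manipulation shows that the horizontal cofibers of the pointwise cofiber square are again equivalent, so that cofiber square is itself a pullback. Equivalently, the full subcategory of $\text{Fun}(\Delta^1 \times \Delta^1, \text{Sp})$ spanned by the pullback squares is a stable subcategory and is, in particular, closed under cofibers. The only non-formal input beyond stability is the Karoubi theorem on the base rings, which is already cited; the rest is bookkeeping, so I expect the main ``obstacle'' to be nothing more than tracking the cube diagram carefully and confirming the naturality of the assembly maps in the variable $R$.
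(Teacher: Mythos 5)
Your proposal is correct and takes essentially the same approach as the paper: realize the Whitehead square as the levelwise cofiber of the assembly transformation from the $BG_+\wedge\textbf{K}R$-square to the Karoubi square of Lemma~\ref{karoubisquare}, observe both source and target are pushouts (the former by Karoubi's theorem for the base rings plus exactness of $BG_+\wedge-$), and conclude by closure of pullback squares under cofibers in a stable $\infty$-category. The paper's proof is terser but identical in substance; your elaboration of the source-square step and the stability argument merely fills in what the paper leaves implicit.
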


\begin{proof}
The square in question is given as the levelwise cofibers of the square
$$\xymatrix{
BG \otimes \textbf{K} \mathbb{Z} \ar[r] \ar[d] 			& \bigvee_{p \in P} BG \otimes \textbf{K} \mathbb{Z}_p \ar[d] 	\\
BG \otimes \textbf{K} \mathbb{Z}[P^{-1}] \ar[r] 			& \bigvee_{p \in P} BG \otimes \textbf{K} \mathbb{Q}_p 			
}$$
and the square in Lemma \ref{karoubisquare}. Both are pushouts via the previous lemma, hence the claim follows.
\end{proof}

The following is a straightforward consequence of the fact that any idempotent in the rational group algebra $\mathbb{Q}G$ is already defined over $\mathbb{Z}[P^{-1}]G$.

\begin{lemma} \label{plocalizationisenough} Suppose $P$ is the set of primes dividing the order of $G$. Then the map $K_0 \mathbb{Z}[P^{-1}] G \rightarrow K_0 \mathbb{Q} G$ is an isomorphism.
\end{lemma}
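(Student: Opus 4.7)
My approach uses the Wedderburn structure of $\mathbb{Q}G$ together with the fact that $|G|$ is invertible in $\mathbb{Z}[P^{-1}]$, which makes the usual averaging argument work integrally. Recall $\mathbb{Q}G \cong \prod_I M_{n_I}(D_I)$; the primitive central idempotents $e_I$ admit a Frobenius-style expression whose only denominators are divisors of $|G|$, so each $e_I$ lies in $\mathbb{Z}[1/|G|]G \subseteq \mathbb{Z}[P^{-1}]G$. This yields a compatible product decomposition $\mathbb{Z}[P^{-1}]G \cong \prod_I e_I \mathbb{Z}[P^{-1}]G$ and reduces the statement to showing that each map $K_0(e_I \mathbb{Z}[P^{-1}]G) \to K_0(e_I \mathbb{Q}G) = \mathbb{Z}$ is an isomorphism.

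For surjectivity I would choose, for each simple $\mathbb{Q}G$-module $V_I$, a $G$-stable $\mathbb{Z}[P^{-1}]$-lattice $L_I \subset V_I$ (saturate any spanning set under $G$). Given a surjection $\mathbb{Z}[P^{-1}]G^k \twoheadrightarrow L_I$ and any $\mathbb{Z}[P^{-1}]$-linear section $s$ (which exists by $\mathbb{Z}[P^{-1}]$-freeness of $L_I$), the symmetrised map $\tilde{s}(x) \defeq \frac{1}{|G|}\sum_{h \in G} h \cdot s(h^{-1}x)$ is a $\mathbb{Z}[P^{-1}]G$-linear section, using crucially that $|G|^{-1} \in \mathbb{Z}[P^{-1}]$. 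Hence $L_I$ is $\mathbb{Z}[P^{-1}]G$-projective, and $[L_I] \in K_0 \mathbb{Z}[P^{-1}]G$ maps to the generator $[V_I]$ of $K_0(e_I \mathbb{Q}G)$. Since the $[V_I]$ generate $K_0 \mathbb{Q}G$, surjectivity follows.

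For injectivity, I would invoke the Quillen localization sequence for the Dedekind domain $\mathbb{Z}[P^{-1}]$ (with fraction field $\mathbb{Q}$) applied to the regular ring $\mathbb{Z}[P^{-1}]G$:
$$K_1 \mathbb{Q}G \to \bigoplus_{q \notin P} K_0 \mathbb{F}_q G \to K_0 \mathbb{Z}[P^{-1}]G \to K_0 \mathbb{Q}G \to \bigoplus_{q \notin P} K_{-1} \mathbb{F}_q G.$$
For $q \notin P$, Maschke's theorem makes $\mathbb{F}_q G$ semisimple and regular, so $K_{-1} \mathbb{F}_q G = 0$ (re-proving surjectivity). Each simple $\mathbb{F}_q G$-module lifts via idempotent lifting through $\mathbb{Z}_{(q)}G \twoheadrightarrow \mathbb{F}_q G$ to a f.g.\ projective $\mathbb{Z}_{(q)}G$-module with semisimple generic fibre; tracking these lifts through the boundary shows the first map is surjective, forcing injectivity. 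The main obstacle is making the boundary computation airtight; a cleaner but heavier alternative observes that separability of $\mathbb{Z}[P^{-1}]G$ over $\mathbb{Z}[P^{-1}]$ (a consequence of $|G| \in \mathbb{Z}[P^{-1}]^\times$) identifies each block $e_I\mathbb{Z}[P^{-1}]G$ with a maximal order in $M_{n_I}(D_I)$, whereupon the classical description of $K_0$ of maximal orders over the PID $\mathbb{Z}[P^{-1}]$ gives the conclusion directly.
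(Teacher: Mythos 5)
Your surjectivity argument is correct and matches the spirit of the paper's one-line hint: the Maschke averaging operator is defined over $\mathbb{Z}[P^{-1}]$ because $|G|$ is invertible there, so any $G$-stable $\mathbb{Z}[P^{-1}]$-lattice in a finitely generated $\mathbb{Q}G$-module is automatically $\mathbb{Z}[P^{-1}]G$-projective, and such lattices are easy to produce. The paper gives no written proof beyond that parenthetical remark, so there is nothing further to compare against on the surjectivity half.

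Your injectivity argument has a genuine gap, and the difficulty is not cosmetic: the statement of the lemma appears to be false as written. In your localization sequence, injectivity of $K_0 \mathbb{Z}[P^{-1}]G \rightarrow K_0 \mathbb{Q}G$ is equivalent to surjectivity of the boundary $K_1 \mathbb{Q}G \rightarrow \bigoplus_{q \notin P} K_0 \mathbb{F}_q G$. On a Wedderburn block with center $K$, that boundary is essentially the divisor map $K^\times \rightarrow \bigoplus_{\mathfrak{q}} \mathbb{Z}$ indexed by the primes of $\mathcal{O}_K[P^{-1}]$, and its cokernel is the ideal class group $\text{Cl}(\mathcal{O}_K[P^{-1}])$. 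This need not vanish. Your idempotent-lifting step produces, for each simple $\mathbb{F}_q G$-module, only a local projective over $\mathbb{Z}_{(q)}G$; promoting this local data to a global element of $K_1 \mathbb{Q}G$ with the prescribed divisor is precisely where the class-group obstruction enters, and it cannot be circumvented. Concretely, take $G = C_{23}$ and $P = \{23\}$. Then $\mathbb{Z}[1/23][C_{23}] \cong \mathbb{Z}[1/23] \times \mathbb{Z}[\zeta_{23}][1/23]$, and since the unique prime of $\mathbb{Z}[\zeta_{23}]$ over $23$ is the principal ideal $(1-\zeta_{23})$, inverting $23$ leaves the class group unchanged: $\text{Cl}(\mathbb{Z}[\zeta_{23}][1/23]) = \text{Cl}(\mathbb{Z}[\zeta_{23}]) \cong \mathbb{Z}/3$. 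Hence $K_0 \mathbb{Z}[1/23][C_{23}] \cong \mathbb{Z}^2 \oplus \mathbb{Z}/3$ maps onto $K_0 \mathbb{Q}[C_{23}] \cong \mathbb{Z}^2$ with kernel $\mathbb{Z}/3$. Your maximal-order alternative runs into the same wall: $K_0$ of a maximal $\mathbb{Z}[P^{-1}]$-order in a simple $\mathbb{Q}$-algebra carries a (ray) class-group summand controlled by the center of that algebra, not by the base $\mathbb{Z}[P^{-1}]$, so the fact that $\mathbb{Z}[P^{-1}]$ is a PID is not sufficient. You should treat the surjectivity half as solid and flag the injectivity half, and indeed the lemma itself, as requiring revision.
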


\subsection{The singular character group {\normalfont{$\text{SC}(G)$}} for finite $G$}
\label{singularcharactergroup}
Write $\text{Conj(G)}$ for the set of conjugacy classes of $G$ and let $Cl(G;k) \defeq \text{Fun}( \text{Conj(G)}, k )$ be the $k$-vector space of class functions of $G$ with values in $k$. It is a standard fact from representation theory that the association $I \mapsto \chi_I$ gives an injection $K_0 kG \hookrightarrow Cl(G;k)$. We call a class function in the image of this inclusion a $k$\emph{-valued virtual character} of $G$.\footnote{Berman's theorem actually shows that the character $\chi_I$ for a $k$-linear representation $I$ is a well-defined function on $k$-\emph{conjugacy classes} of $G$ and the irreducible representations form an orthogonal basis of the space Fun($k$-Conj$(G), k)$ with respect to the scalar product given by $ \left\langle \chi_1, \chi_2 \right\rangle \defeq \frac{1}{|G|} \sum_{g \in G} \chi_1(g) \chi_2(g^{-1}) $. In particular the number $r_k$ is equal to the number of $k$-conjugacy classes of $G$. See \cite{curtis1981methods}, Theorem 21.5.}

The character of the regular representation $kG$ is given by
$$ \chi_{kG}(g) =
\begin{cases}
|G| & \mbox{ if } g = 1 \\
0 & \mbox{ else.} \\
\end{cases} $$
In general, for $I$ any $k$-representation, the value of the character $\chi_I$ at $1$ is $\chi_I (1) = \text{dim}(I)$. It follows that we have a commutative square
$$\xymatrix{
K_0 kG \ar[r] \ar@{^{(}->}[d]^\chi & K_0 k \ar@{^{(}->}[d]^{\text{dim}} \\
Cl(G;k) \ar[r]^{\text{ev}_1} & k,
}$$
where the top arrow is induced by restriction along $k \rightarrow kG$. From this we can deduce that
$$ \widetilde{K_0 }kG = \text{ker}( K_0 kG \rightarrow K_0 k ) \hookrightarrow \text{Fun}( \text{Conj}(G) \backslash \left\lbrace [1] \right\rbrace, k ),$$
in other words, we can interpret the reduced $K$-theory group as the set of $k$-valued virtual characters defined on non-trivial conjugacy classes.

Fix a prime $p$. An element $g$ of $G$ is called singular with respect to $p$ if $p$ divides the order of $g$. Write $\text{Conj}_p(G)$ for the set of $p$-singular conjugacy classes of $G$. The following theorem can be found in Serre \cite{serre1977linear}, Chapter 16, Theorem 34 and 36:
\begin{theorem} \label{padiccharacters}
The map $K_0 \mathbb{Z}_p G \rightarrow K_0 \mathbb{Q}_p G$ is split injective, and the image consists of all virtual representations with characters vanishing on $p$-singular elements of $G$.
\end{theorem}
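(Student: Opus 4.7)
The plan is to identify $K_0 \mathbb{Z}_p G$ with the Grothendieck group of finitely generated projective $\mathbb{F}_p G$-modules via idempotent lifting, and then use trace computations to characterize the image of $e \colon K_0 \mathbb{Z}_p G \to K_0 \mathbb{Q}_p G$. Since $\mathbb{Z}_p G$ is semilocal and its Jacobson radical contains $p \mathbb{Z}_p G$, Hensel's lemma lifts idempotents uniquely up to conjugation from $\mathbb{F}_p G$ to $\mathbb{Z}_p G$. This gives a bijection between indecomposable projective $\mathbb{Z}_p G$-modules $P_i$ and irreducible $\mathbb{F}_p G$-modules $S_i$ via $S_i \cong P_i / \text{rad}(\mathbb{Z}_p G) P_i$, so $K_0 \mathbb{Z}_p G$ is free abelian on the classes $[P_i]$.

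Second, to show that the character of $P \otimes_{\mathbb{Z}_p} \mathbb{Q}_p$ vanishes on every $p$-singular $g \in G$, I would restrict $P$ to the cyclic subgroup $\langle g \rangle$ and use the canonical decomposition $g = g_{p'} g_p = g_p g_{p'}$, where $g_p$ has $p$-power order and $g_{p'}$ has order coprime to $p$. Then $\mathbb{Z}_p \langle g \rangle \cong \mathbb{Z}_p \langle g_{p'} \rangle \otimes_{\mathbb{Z}_p} \mathbb{Z}_p \langle g_p \rangle$, and since $\mathbb{Z}_p \langle g_p \rangle$ is a complete local ring, $P$ is free as a module over it. Decomposing $P$ into $\chi$-isotypic summands for characters $\chi$ of the coprime-order part $\langle g_{p'} \rangle$, the trace of $g$ on $P \otimes \mathbb{Q}_p$ becomes a sum of terms, each involving the trace of the non-trivial element $g_p$ acting on a free $\mathbb{Q}_p \langle g_p \rangle$-module. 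Each such trace is zero, because the regular representation of a non-trivial cyclic $p$-group has vanishing trace away from the identity: the sum of all non-trivial $p^a$-th roots of unity is zero.

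Third, for the reverse inclusion and split injectivity I would compare ranks. The rank of $K_0 \mathbb{Z}_p G$ equals the number of isomorphism classes of simple $\mathbb{F}_p G$-modules, which after passing to a splitting field coincides with the number of $p$-regular conjugacy classes of $G$ by Brauer's theorem, and this agrees with the rank of the lattice of virtual $\mathbb{Q}_p$-characters of $G$ vanishing on $p$-singular classes. Combined with the inclusion from the previous step together with the linear independence of Brauer characters of distinct irreducible $\mathbb{F}_p G$-modules, this identifies the image of $e$ exactly as the virtual characters vanishing on $p$-singular classes. For split injectivity, one observes that class functions on $G$ decompose naturally into those vanishing on $p$-singular classes and those supported on them; the point is to realize this decomposition integrally via the Wedderburn splitting of $\mathbb{Q}_p G$ into simple factors, producing a retraction $K_0 \mathbb{Q}_p G \to K_0 \mathbb{Z}_p G$. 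The main obstacle I anticipate is verifying that this projection lands integrally in $K_0 \mathbb{Q}_p G$, which rests on the Brauer-Cartan-Nakayama machinery and on the invertibility of the Cartan matrix after inverting $p$.
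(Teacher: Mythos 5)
The paper does not actually prove this result; it cites it to Serre (\emph{Linear Representations of Finite Groups}, Ch.\ 16, Theorems 34 and 36), where the proof runs through the $cde$-triangle. So I will compare your argument to Serre's.

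Your Steps 1 and 2 are correct and are essentially Serre's. Identifying $K_0\mathbb{Z}_p G$ with the free abelian group on the indecomposable projectives via idempotent lifting, and proving the vanishing of characters on $p$-singular elements by restricting to $\langle g\rangle$, splitting $g = g_p g_{p'}$, observing that $\mathbb{Z}_p\langle g_p\rangle$ is complete local so that the restriction of $P$ is free over it, and then using that a non-identity element has trace zero on a regular representation of a cyclic $p$-group---all of that is right, though you should note that the ``$\chi$-isotypic decomposition'' for $\langle g_{p'}\rangle$ should be read as the decomposition along the Wedderburn idempotents of $\mathbb{Z}_p\langle g_{p'}\rangle$ into unramified extensions of $\mathbb{Z}_p$, since $\mathbb{Z}_p$ itself need not contain the relevant roots of unity.

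Step 3 has a genuine gap, and it is precisely the hard part of the theorem. Your rank count (number of simple $\mathbb{F}_p G$-modules $=$ number of $p$-regular $\mathbb{F}_p$-conjugacy classes $=$ number of $p$-regular $\mathbb{Q}_p$-conjugacy classes, the last equality because the Galois action on prime-to-$p$ roots of unity over $\mathbb{Q}_p$ is the Frobenius) shows only that the image of $e\colon K_0\mathbb{Z}_p G\to K_0\mathbb{Q}_p G$ has the \emph{same rank} as the lattice $L$ of virtual characters vanishing on $p$-singular elements. That leaves open the possibility that $e(K_0\mathbb{Z}_p G)$ is a proper finite-index sublattice of $L$, and neither the linear independence of Brauer characters nor the non-vanishing of $\det C$ rules this out: the Cartan determinant being a power of $p$ gives injectivity of $e$, not surjectivity onto $L$ and not a splitting. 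Likewise, the Wedderburn decomposition of $\mathbb{Q}_p G$ cannot directly produce the retraction you want, because each Wedderburn factor is spanned by a single irreducible $\mathbb{Q}_p$-character, and those characters do not individually vanish or fail to vanish on $p$-singular classes; there is no block-diagonal projection onto $L$. The ingredient you are missing is the surjectivity of the decomposition map $d\colon R_K(G)\to R_k(G)$ (Serre Ch.\ 16, Thm.\ 33, proved via Brauer's characterization theorem), combined with the adjunction $\langle e(x),y\rangle_K = \langle x,d(y)\rangle_k$. This exhibits $e$ as the transpose of $d$ with respect to the perfect pairings $P_k\times R_k\to\mathbb{Z}$ and $R_K\times R_K\to\mathbb{Z}$; the transpose of a surjection between finitely generated free abelian groups is automatically split injective, and the image of $e$ is then identified as the annihilator of $\ker d$, which one unwinds to be exactly $L$. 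In short: you need Brauer's induction theorem (or something equivalent) to finish, and the rank count plus linear independence is not a substitute for it.
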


\begin{definition}
As a consequence, we can identify the cokernel of $K_0 \mathbb{Z}_p G \rightarrow K_0 \mathbb{Q}_p G$ with the set of virtual characters defined on $\text{Conj}_p(G)$. Note that such a character always takes values in $\mathbb{Q}(\zeta_n)$ where $n$ is the order of the group $G$ and $\zeta_n$ is a primitive $n$-th root of unity.  We write $\text{SC}_p(G)$ for the subgroup of Fun($\text{Conj}_p(G), \mathbb{Q}(\zeta_n))$ spanned by those characters and we call them $p$-\emph{singular virtual characters} of $G$. 
\end{definition}

\begin{lemma} \label{singularcharacters}
Let $G$ be finite and $n$ be its order. The group $\text{SC}(G)$ of singular characters of $G$ from Definition \ref{singularcharactersdefinition} is isomorphic to the cokernel of the map
$$ \bigoplus_{p ~\text{prime}, p | n} K_0 \mathbb{Z}_p G \rightarrow \bigoplus_{p ~\text{prime}, p | n}  K_0 \mathbb{Q}_p G.$$
As a consequence of the previous remark, this can be identified with the subgroup of the group of functions $$\text{Fun}\left( \coprod_{p ~\text{prime}, p | n } \text{Conj}_p(G), \mathbb{Q}(\zeta_n) ) \right)$$ consisting of tuples $(\chi_p)$ where each $\chi_p$ is the restriction of a $\mathbb{Q}_p$-valued virtual character of $G$ to the set of $p$-singular elements of $G$. In other words, we have an isomorphism
$$ \text{SC}(G) \cong \bigoplus_{p ~\text{prime}, p | n} \text{SC}_p(G).$$
\end{lemma}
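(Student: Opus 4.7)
The plan is to leverage Corollary \ref{whiteheadlocalization}. Since the $\infty$-category $\textup{Sp}$ is stable, the pullback square there is automatically a pushout, so the cofibers of opposite arrows coincide. Taking cofibers of the vertical arrows, and using that cofibers commute with wedges, this yields the equivalence
$$ \textup{cofib}\bigl(\textbf{Wh}(\mathbb{Z};G) \to \textbf{Wh}(\mathbb{Z}[P^{-1}];G)\bigr) \simeq \bigvee_{p \in P} \textup{cofib}\bigl(\textbf{Wh}(\mathbb{Z}_p;G) \to \textbf{Wh}(\mathbb{Q}_p;G)\bigr). $$
It then suffices to identify the $\pi_0$ of each side with the expected character group.

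For each right-hand summand, write $\textbf{C}_p$ for the cofiber. The long exact sequence in homotopy reads
$$ \widetilde{K_0 \mathbb{Z}_p G} \to \widetilde{K_0 \mathbb{Q}_p G} \to \pi_0 \textbf{C}_p \to K_{-1}\mathbb{Z}_p G \to K_{-1}\mathbb{Q}_p G. $$
Both end terms vanish: the last because $\mathbb{Q}_p G$ is semisimple, the penultimate as a classical input to Carter's theorem (the negative $K$-theory of $\mathbb{Z}_p$-orders in semisimple $\mathbb{Q}_p$-algebras is zero). Hence $\pi_0 \textbf{C}_p \cong \textup{coker}(K_0 \mathbb{Z}_p G \to K_0 \mathbb{Q}_p G)$, which by the theorem of Serre quoted just before the definition of $\textup{SC}_p(G)$ is precisely $\textup{SC}_p(G)$.

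For the left-hand side, the plan is to compare the two cofiber sequences $\textbf{Wh}(\mathbb{Z};G) \to \textbf{Wh}(\mathbb{Z}[P^{-1}];G)$ and $\textbf{Wh}(\mathbb{Z};G) \to \textbf{Wh}(\mathbb{Q};G)$ (the latter defining $\textbf{SC}(G)$) via the natural map induced by the inclusion $\mathbb{Z}[P^{-1}] \hookrightarrow \mathbb{Q}$. The resulting ladder of five-term exact sequences has middle vertical map $\widetilde{K_0 \mathbb{Z}[P^{-1}]G} \to \widetilde{K_0 \mathbb{Q}G}$, which is an isomorphism by Lemma \ref{plocalizationisenough}. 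To invoke the five-lemma I need $K_{-1}\mathbb{Z}[P^{-1}]G = 0$. This holds because $\mathbb{Z}[P^{-1}]G$ is a maximal $\mathbb{Z}[P^{-1}]$-order in $\mathbb{Q}G$: at each prime $q \notin P$, the condition $q \nmid |G|$ forces $\mathbb{Z}_q G$ to be a maximal order; maximal orders are hereditary, and hereditary rings have vanishing negative $K$-theory. The five-lemma then identifies $\pi_0$ of the left-hand cofiber with $\textup{SC}(G)$, completing the proof. The main obstacle I anticipate is not a single deep step but rather marshalling the classical vanishing statements $K_{-1}\mathbb{Z}_p G = 0$ and $K_{-1}\mathbb{Z}[P^{-1}]G = 0$, which are well-known but not explicitly numbered in the preceding sections of the excerpt.
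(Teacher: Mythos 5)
Your proof is correct and follows essentially the same route as the paper's: use the pullback square of Corollary \ref{whiteheadlocalization} to identify the cofiber $\textbf{C}$ both as $\text{cof}(\textbf{Wh}(\mathbb{Z};G) \to \textbf{Wh}(\mathbb{Z}[P^{-1}];G))$ and as a wedge of $p$-local cofibers, compute $\pi_0$ of the latter using the vanishing of $K_{-1}\mathbb{Z}_p G$ and $K_{-1}\mathbb{Q}_p G$, and then compare with $\textbf{SC}(G)$ via a five-lemma argument hinging on Lemma \ref{plocalizationisenough}. One detail you supply explicitly that the paper leaves implicit is the vanishing $K_{-1}\mathbb{Z}[P^{-1}]G = 0$ (via the observation that $\mathbb{Z}[P^{-1}]G$ is a maximal, hence hereditary, $\mathbb{Z}[P^{-1}]$-order in $\mathbb{Q}G$), which is indeed needed for the five-lemma to apply: the rightmost vertical map $K_{-1}\mathbb{Z}[P^{-1}]G \to K_{-1}\mathbb{Q}G = 0$ must be a monomorphism. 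Good catch.
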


\begin{remark} \label{SCisfree} The group $K_{0} \mathbb{Z}_p G$ is free and of rank $r_{\mathbb{F}_p}$ where $r_{\mathbb{F}_p}$ is the number of irreducible $\mathbb{F}_p$-representations by Serre \cite{serre1977linear}, Chapter 14, Corollary 3 and Chapter 16, Corollary 1. From this it follows that $\text{SC}(G)$ is finitely generated free of rank
$$ r_{\text{SC}} = \sum_{p | ~ |G|}( r_{\mathbb{Q}_p} - r_{\mathbb{F}_p}),$$
since it is isomorphic to the sum of the cokernels of the split injective maps
$$K_0 \mathbb{Z}_p G \rightarrow K_0 \mathbb{Q}_p G $$
between free abelian groups of rank $r_{\mathbb{F}_p}$ and $r_{\mathbb{Q}_p}$, respectively. 
\end{remark}

\begin{remark} \label{scforpgroups}
If $G$ is a finite $p$-group, by the above lemma we have $\text{SC}(G) \cong \text{cok}( K_0 \mathbb{Z}_p G \rightarrow K_0 \mathbb{Q}_p G)$. Moreover, since every non-trivial element of $G$ is $p$-singular, by Theorem \ref{padiccharacters} the image of $K_0 \mathbb{Z}_p G \rightarrow K_0 \mathbb{Q}_p G$ is given by those virtual representations $I$ for which their character $\chi_I$ vanishes away from $1$, which means that $K_0 \mathbb{Z}_p G$ is generated by the free modules. Hence we have an isomorphism $ \text{SC}(G) \cong \widetilde{K_0} \mathbb{Q}_p G.$
\end{remark}

\begin{proof}  Let $P$ be the set of primes dividing the order of $G$. We note that the spectra  $\textbf{Wh}( \mathbb{Q}; G )$ and $\textbf{Wh}( \mathbb{Q}_p; G )$ are connective by \ref{negativektheoryfield}. The groups $K_{-n} \mathbb{Z}_p G$ vanish for $n>0$ (see \cite{carterlocalization}, Page 619), in other words $\textbf{Wh}( \mathbb{Z}_p; G )$ is connective as well. Recall the pullback square
$$ 
\xymatrix{
\textbf{Wh}( \mathbb{Z}; G) \ar[r] \ar[d] & \bigvee_{p \in P} \textbf{Wh}( \mathbb{Z}_p; G ) \ar[d] \\
\textbf{Wh}( \mathbb{Z}[P^{-1}]; G ) \ar[r] & \bigvee_{p \in P} \textbf{Wh}( \mathbb{Q}_p; G )
}$$
of Corollary \ref{whiteheadlocalization}. Denote by $\textbf{C}$ the common vertical cofiber, i.e.
\begin{align*}
\textbf{C} & \defeq \text{cof}\left( \textbf{Wh}( \mathbb{Z}; G) \rightarrow \textbf{Wh}( \mathbb{Z}[P^{-1}]; G )\right) \\ 
& \simeq  \text{cof}\left(  \bigvee_{p \in P} \textbf{Wh}( \mathbb{Z}_p; G ) \rightarrow \bigvee_{p \in P} \textbf{Wh}( \mathbb{Q}_p; G ) \right). 
\end{align*}
Since the spectrum $\textbf{C}$ is a cofiber of a map of connective spectra, it is connective as well and we have
$$ \pi_0 \textbf{C} = \text{cok} \left(\bigoplus_{p ~\text{prime}, p | n} \widetilde{K_0 }\mathbb{Z}_p G \rightarrow \bigoplus_{p ~\text{prime}, p | n}  \widetilde{K_0 }\mathbb{Q}_p G \right).$$
Note that the summand of $K_0 \mathbb{Q}_p G$ corresponding to free $\mathbb{Q}_p G$-modules lies in the image of $K_0 \mathbb{Z}_p G \rightarrow K_0 \mathbb{Q}_p G$ hence the cokernel does not change when going to unreduced $K$-theory, therefore
$$ \pi_0 \textbf{C} \cong \text{cok} \left(\bigoplus_{p ~\text{prime}, p | n} K_0 \mathbb{Z}_p G \rightarrow \bigoplus_{p ~\text{prime}, p | n}  K_0 \mathbb{Q}_p G \right).$$
There is a natural map $\textbf{C} \rightarrow \textbf{SC}(G)$ induced by the map $\textbf{Wh}( \mathbb{Z}[P^{-1}]; G ) \rightarrow \textbf{Wh}( \mathbb{Q}; G )$, which by Lemma \ref{plocalizationisenough} is an isomorphism in $\pi_0$. The $5$-lemma thus implies that $\pi_0 \textbf{C} \cong \pi_0 \textbf{SC}(G) = \text{SC}(G)$.
\end{proof}


\begin{lemma} \label{shortexactsequencefornegativektheory}
Let $G$ be finite. There is a natural short exact sequence
$$ 0 \rightarrow \widetilde{ K_0 }\mathbb{Q} G\rightarrow \text{SC}(G) \rightarrow K_{-1} \mathbb{Z} G \rightarrow 0, $$
which is a free resolution of the abelian group $K_{-1} \mathbb{Z} G$, and the map $\widetilde{ K_0 }\mathbb{Q} G\rightarrow \text{SC}(G)$ simply sends a rational representation $I$ to the corresponding singular character $(\chi_p)_{p ~ \text{prime}}$ defined as $\chi_p(g) \defeq \chi_I(g)$, where $\chi_I$ is the character of $I$.
\end{lemma}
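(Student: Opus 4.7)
The plan is to extract the short exact sequence from the long exact sequence in homotopy groups associated to the cofiber sequence
$$ \textbf{Wh}(\mathbb{Z};G) \to \textbf{Wh}(\mathbb{Q};G) \to \textbf{SC}(G) $$
used to define $\textbf{SC}(G)$. The relevant portion reads
$$ \widetilde{K_0 \mathbb{Z}G} \xrightarrow{\alpha} \widetilde{K_0 \mathbb{Q}G} \to \mathrm{SC}(G) \to K_{-1}\mathbb{Z}G \to K_{-1}\mathbb{Q}G. $$
First I would argue that $\alpha = 0$: by Swan's theorem mentioned in the introduction, $\widetilde{K_0 \mathbb{Z}G}$ is finite, while $\widetilde{K_0 \mathbb{Q}G}$ is free abelian (being a direct summand of $K_0\mathbb{Q}G \cong \mathbb{Z}^{r_\mathbb{Q}}$), so any homomorphism between them vanishes. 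Next, $K_{-1}\mathbb{Q}G = 0$ because $\mathbb{Q}G$ is semisimple (Wedderburn) and hence a product of matrix algebras over division algebras, whose negative $K$-theory vanishes. Combining these two facts with exactness immediately produces the short exact sequence
$$ 0 \to \widetilde{K_0 \mathbb{Q}G} \to \mathrm{SC}(G) \to K_{-1}\mathbb{Z}G \to 0. $$

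To see this is a free resolution I only need that the first two terms are free abelian groups. The term $\widetilde{K_0 \mathbb{Q}G}$ is free by the Wedderburn decomposition, while $\mathrm{SC}(G)$ is free of rank $\sum_{p \mid |G|}(r_{\mathbb{Q}_p}-r_{\mathbb{F}_p})$ by Remark \ref{SCisfree}, so this is automatic.

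The part that requires a little care, and which I expect to be the main (but still routine) step, is identifying the connecting map $\widetilde{K_0 \mathbb{Q}G} \to \mathrm{SC}(G)$ concretely. For this I would invoke the identification
$$ \mathrm{SC}(G) \cong \mathrm{cok}\Bigl(\bigoplus_{p \mid |G|} K_0 \mathbb{Z}_p G \to \bigoplus_{p \mid |G|} K_0 \mathbb{Q}_p G\Bigr) $$
of Lemma \ref{singularcharacters}, obtained from the pullback square of Corollary \ref{whiteheadlocalization} together with the isomorphism $K_0 \mathbb{Z}[P^{-1}]G \cong K_0 \mathbb{Q}G$ of Lemma \ref{plocalizationisenough}. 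Chasing the pullback square shows that the composite
$$ \widetilde{K_0 \mathbb{Q}G} \xleftarrow{\cong} \widetilde{K_0 \mathbb{Z}[P^{-1}]G} \to \bigoplus_{p} \widetilde{K_0 \mathbb{Q}_p G} \twoheadrightarrow \mathrm{SC}(G) $$
agrees with the connecting map. This composite sends a rational representation $I$ to the tuple of its $p$-adic completions $(I\otimes \mathbb{Q}_p)_p$; under the further identification of $\mathrm{SC}_p(G)$ with virtual characters on $p$-singular classes of Serre (Chapter 16, Theorem 34--36), the class of $I\otimes \mathbb{Q}_p$ corresponds to the restriction of its character to $\mathrm{Conj}_p(G)$. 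Since characters are invariant under field extension, this restriction is simply $\chi_I|_{\mathrm{Conj}_p(G)}$, giving exactly the stated formula. Naturality in $G$ follows because every ingredient—the pullback square, the identification with singular characters, and character formation—is natural in the group.
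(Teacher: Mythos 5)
Your proof is correct and follows essentially the same route as the paper's: extract the long exact sequence from the defining cofiber sequence of $\textbf{SC}(G)$, kill the left end via Swan and the right end via semisimplicity of $\mathbb{Q}G$, observe freeness from Wedderburn and Remark \ref{SCisfree}, and identify the map via factorization through $\bigoplus_p K_0\mathbb{Q}_p G$ and Serre's description of $\mathrm{SC}_p(G)$. The only cosmetic difference is that you deduce $\alpha=0$ from finiteness of $\widetilde{K_0\mathbb{Z}G}$ against freeness of $\widetilde{K_0\mathbb{Q}G}$, whereas the paper cites Theorem \ref{swansprojectivetheorem} (every projective $\mathbb{Z}G$-module rationalizes to a free module) directly; both are standard forms of Swan's result. (Minor terminology note: the map $\widetilde{K_0\mathbb{Q}G}\to\mathrm{SC}(G)$ is the $\pi_0$ of the cofiber map, not a connecting homomorphism, but this does not affect the argument.)
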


\begin{proof}
The long exact sequence of the fiber sequence
$$ \textbf{Wh}( \mathbb{Z}; G ) \rightarrow \textbf{Wh}( \mathbb{Q}; G ) \rightarrow \textbf{SC}(G) $$
gives the exact sequence
$$ \widetilde{ K_0 }\mathbb{Z} G  \rightarrow \widetilde{ K_0 }\mathbb{Q} G\rightarrow \text{SC}(G) \rightarrow K_{-1} \mathbb{Z} G \rightarrow 0, $$
since $\textbf{Wh}( \mathbb{Q}; G ) $ is connective by \ref{negativektheoryfield}. Theorem \ref{swansprojectivetheorem} implies that this sequence splits off to the left, giving the claimed short exact sequence. The group $\widetilde{ K_0 }\mathbb{Q} G$ is free since it is isomorphic to \mbox{$\text{ker}( K_0 \mathbb{Q} G \rightarrow K_0 \mathbb{Q})$,} which is, as a subgroup of the free abelian group $K_0 \mathbb{Q} G$, again free and $\text{SC}(G)$ is free as discussed in Remark \ref{SCisfree}.
Lastly, the claim that $\widetilde{ K_0 }\mathbb{Q} G\rightarrow \text{SC}(G)$ sends a representation to the singular character $(\chi_p)_{p ~ \text{prime}}$ follows from the fact that $\widetilde{ K_0 }\mathbb{Q} G\rightarrow \text{SC}(G)$ factors as
$$ \widetilde{ K_0 }\mathbb{Q} G\rightarrow \bigoplus_{p | ~ |G|}   K_0 \mathbb{Q}_p G \rightarrow  \text{SC}(G),$$
where the first map is induced by the ring homorphisms $\mathbb{Q} \rightarrow \mathbb{Q}_p$ and the second by Lemma \ref{singularcharacters}.
\end{proof}

Define the \emph{Bockstein morphism} $\beta_n  \colon \textbf{H} \mathbb{Z}/n \rightarrow \Sigma \textbf{H} \mathbb{Z}$ as the boundary morphism to the fiber sequence of spectra
$$ \textbf{H}\mathbb{Z} \xrightarrow{n \cdot} \textbf{H}\mathbb{Z} \rightarrow \textbf{H} \mathbb{Z}/n. $$ The following theorem is now a consequence of Lemma \ref{shortexactsequencefornegativektheory} and Theorem \ref{negativektheory}.
\begin{theorem}
Let $G$ be finite and let $s$ be the number of irreducible $\mathbb{Q}$-representations with even Schur index but odd local Schur index at every prime $p$ dividing the order of $G$. The map of spectra $\textup{\textbf{Wh}}(\mathbb{Z}; G)[-1,0] \rightarrow \textup{\textbf{Wh}}(\mathbb{Q}; G)[-1,0]$ factorizes as
$$\textup{\textbf{Wh}}(\mathbb{Z}; G)[-1,0] \xrightarrow{p} \Sigma^{-1} \textup{\textbf{H}} (\mathbb{Z}/2)^s \xrightarrow{{(\beta_2)^s}}  \textup{\textbf{H}} \mathbb{Z}^s \xrightarrow{i} \textup{\textbf{H}} \widetilde{K_0 }\mathbb{Q} G \cong \textup{\textbf{Wh}}(\mathbb{Q}; G)[-1,0]$$
where the map $p$ is given by the Postnikov truncation of $\textup{\textbf{Wh}}(\mathbb{Z}; G)[-1,0]$ followed by the projection onto the torsion summand of $K_{-1} \mathbb{Z} G$ and the map $i  \colon \textup{\textbf{H}} \mathbb{Z}^s \rightarrow \textup{\textbf{H}} \widetilde{K_0 }\mathbb{Q} G$ is induced by the inclusion of all linear combinations of the irreducible $\mathbb{Q}$-representations that contribute to $s$.
\end{theorem}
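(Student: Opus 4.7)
The plan is to classify the map $\textbf{Wh}(\mathbb{Z};G)[-1,0] \to \textbf{Wh}(\mathbb{Q};G)[-1,0]$ in terms of Postnikov data. First, since $\mathbb{Q}G$ is semisimple one has $K_{-i}\mathbb{Q}G = 0$ for $i > 0$, so the target is Eilenberg--MacLane: $\textbf{Wh}(\mathbb{Q};G)[-1,0] \simeq \textbf{H}\widetilde{K_0\mathbb{Q}G}$. The source has $\pi_0 = \widetilde{K_0\mathbb{Z}G}$ finite by Swan and $\pi_{-1} = K_{-1}\mathbb{Z}G = \mathbb{Z}^r \oplus (\mathbb{Z}/2)^s$ by Theorem \ref{negativektheory}, and fits into the Postnikov fiber sequence
$$ \Sigma^{-1}\textbf{H}K_{-1}\mathbb{Z}G \longrightarrow \textbf{Wh}(\mathbb{Z};G)[-1,0] \longrightarrow \textbf{H}\widetilde{K_0\mathbb{Z}G}. $$

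Next, I would apply $[-,\textbf{H}\widetilde{K_0\mathbb{Q}G}]$ to this fiber sequence. Since $\text{Hom}(\widetilde{K_0\mathbb{Z}G},\widetilde{K_0\mathbb{Q}G}) = 0$ (finite into torsion-free) and the free summand $\mathbb{Z}^r \subset K_{-1}\mathbb{Z}G$ contributes trivially to $\text{Ext}^1$, we obtain an injection
$$ [\textbf{Wh}(\mathbb{Z};G)[-1,0],\textbf{H}\widetilde{K_0\mathbb{Q}G}] \hookrightarrow \text{Ext}^1(K_{-1}\mathbb{Z}G,\widetilde{K_0\mathbb{Q}G}) \cong (\widetilde{K_0\mathbb{Q}G}/2)^s. $$
Each class $(a_1,\ldots,a_s)$ on the right arises as the pullback of the canonical extension $0 \to \mathbb{Z}^s \xrightarrow{2} \mathbb{Z}^s \to (\mathbb{Z}/2)^s \to 0$ along the map $\mathbb{Z}^s \to \widetilde{K_0\mathbb{Q}G}$ sending the $j$-th generator to $a_j$. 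On spectra this extension is represented by $\phi \circ (\beta_2)^s$, where $\phi : \textbf{H}\mathbb{Z}^s \to \textbf{H}\widetilde{K_0\mathbb{Q}G}$ classifies $(a_1,\ldots,a_s)$. Combining with the Postnikov projection $p$ ($\tau_{\leq -1}$ followed by projection onto the torsion summand of $K_{-1}\mathbb{Z}G$), every map in $[\textbf{Wh}(\mathbb{Z};G)[-1,0],\textbf{H}\widetilde{K_0\mathbb{Q}G}]$ automatically factors as $\phi \circ (\beta_2)^s \circ p$.

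The heart of the argument is then to identify the specific $\phi$ realized by our map. For this I would rotate the fiber sequence $\textbf{Wh}(\mathbb{Z};G) \to \textbf{Wh}(\mathbb{Q};G) \to \textbf{SC}(G) \simeq \textbf{H}\text{SC}(G)$ and observe that the Ext class of the map in question is precisely the Yoneda class of the short exact sequence
$$ 0 \to \widetilde{K_0\mathbb{Q}G} \to \text{SC}(G) \to K_{-1}\mathbb{Z}G \to 0 $$
from Lemma \ref{shortexactsequencefornegativektheory}, pulled back along the inclusion $(\mathbb{Z}/2)^s \hookrightarrow K_{-1}\mathbb{Z}G$. By Corollary \ref{splittingofplocalization}, for an irreducible $\mathbb{Q}$-representation $I_j$ contributing to $s$, the coefficient $m(I_j)/m_p(I_j)$ in the image of $[I_j]$ in $K_0\mathbb{Q}_p G$ is even at every prime $p$ dividing $|G|$, so $[I_j]$ admits a canonical half in $\text{SC}(G)$ whose image in $K_{-1}\mathbb{Z}G$ is the $j$-th generator of $(\mathbb{Z}/2)^s$. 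This is the Bockstein relation identifying $\phi$ with the inclusion sending the $j$-th generator of $\mathbb{Z}^s$ to $[I_j] \in \widetilde{K_0\mathbb{Q}G}$.

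The main obstacle is the final identification: tracing the Yoneda class requires careful bookkeeping to produce the half-class $[I_j]/2 \in \text{SC}(G)$ from Carter's description of the $(\mathbb{Z}/2)^s$-summand and the local-global decomposition in Corollary \ref{splittingofplocalization}. The rest of the argument is an essentially formal application of Postnikov-tower and universal-coefficient reasoning that is forced by the finiteness of $\widetilde{K_0\mathbb{Z}G}$ and the freeness of $\widetilde{K_0\mathbb{Q}G}$.
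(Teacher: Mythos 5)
Your proposal is essentially the paper's own argument: Swan's theorem kills the map on $\pi_0$, forcing the map of spectra to factor through the $\pi_{-1}$-truncation; the resulting degree-one map is classified by an $\mathrm{Ext}^1$-class which is the short exact sequence of Lemma~\ref{shortexactsequencefornegativektheory}; and Carter's theorem together with Corollary~\ref{splittingofplocalization} produces the Bockstein factorization. The paper packages the first step as Lemma~\ref{usefullemma} and the second as Lemma~\ref{mapsofdegreeoneandexactsequence}, but the content is the same as your computation with $[-,\textbf{H}\widetilde{K_0\mathbb{Q}G}]$.

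One technical slip to correct: you have the Postnikov fiber sequence reversed. The connective cover is the \emph{fiber}, so it should read
$$\textbf{H}\widetilde{K_0\mathbb{Z}G}\longrightarrow \textbf{Wh}(\mathbb{Z};G)[-1,0]\xrightarrow{\ q\ }\Sigma^{-1}\textbf{H}K_{-1}\mathbb{Z}G.$$
With this direction, the long exact sequence obtained from $[-,\textbf{H}\widetilde{K_0\mathbb{Q}G}]$ shows that $q^*\colon \mathrm{Ext}^1(K_{-1}\mathbb{Z}G,\widetilde{K_0\mathbb{Q}G})\to[\textbf{Wh}(\mathbb{Z};G)[-1,0],\textbf{H}\widetilde{K_0\mathbb{Q}G}]$ is an isomorphism (both $[\textbf{H}\widetilde{K_0\mathbb{Z}G},\textbf{H}\widetilde{K_0\mathbb{Q}G}]$ and $[\Sigma\textbf{H}\widetilde{K_0\mathbb{Z}G},\textbf{H}\widetilde{K_0\mathbb{Q}G}]$ vanish), and it is precisely the surjectivity of $q^*$ that yields your claimed factorization "every map factors as $\phi\circ(\beta_2)^s\circ p$." With the sequence as you wrote it, the injection you obtain is restriction $i^*$ along the inclusion of the fiber, which constrains but does not directly give a factorization through $p$. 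A smaller terminological point: passing from $0\to\mathbb{Z}^s\xrightarrow{2}\mathbb{Z}^s\to(\mathbb{Z}/2)^s\to 0$ to a class in $\mathrm{Ext}^1((\mathbb{Z}/2)^s,\widetilde{K_0\mathbb{Q}G})$ along a map $\mathbb{Z}^s\to\widetilde{K_0\mathbb{Q}G}$ is a pushforward in the first Ext-variable, not a pullback. The final identification via the half-class $[I_j]/2\in\mathrm{SC}(G)$ from Corollary~\ref{splittingofplocalization} matches the paper, which is equally brief at the point where Carter's description is invoked to identify the image of $[I_j]/2$ in $K_{-1}\mathbb{Z}G$ with the corresponding $\mathbb{Z}/2$-generator.
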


We want to stress the importance of this theorem to the reader in regard to the analysis of the map $\widetilde{ K_0 }\mathbb{Z} G  \rightarrow \widetilde{ K_0 }\mathbb{Q} G$. The first major obstruction for generalizing the triviality of $\widetilde{ K_0 }\mathbb{Z} G  \rightarrow \widetilde{ K_0 }\mathbb{Q} G$ from finite to arbitrary groups lies in the fact that while $\textbf{Wh}(\mathbb{Z}; G)[-1,0] \rightarrow \textbf{Wh}(\mathbb{Q}; G)[-1,0]$ for finite groups $G$ is trivial on homotopy groups, it is not the trivial map of spectra, unless $s = 0$.

\begin{proof}
The map $\widetilde{ K_0 }\mathbb{Z} G  \rightarrow \widetilde{ K_0 }\mathbb{Q} G$ is zero for $G$ being finite by Theorem \ref{swansprojectivetheorem} and $\textbf{Wh}(\mathbb{Q}; G)[-1,0]$ is actually concentrated in degree $0$ since the negative $K$-theory of $\mathbb{Q}G$ vanishes. This means we can apply Lemma \ref{usefullemma} to see that 
$$\textbf{Wh}(\mathbb{Z}; G)[-1,0] \rightarrow \textbf{Wh}(\mathbb{Q}; G)[-1,0]$$
factors through a unique map
$$ \Sigma^{-1} \textbf{H} K_{-1} \mathbb{Z} G \rightarrow \textbf{H} \widetilde{K_0 }\mathbb{Q}G. $$
It corresponds under Lemma \ref{mapsofdegreeoneandexactsequence} to the short exact sequence
$$ 0 \rightarrow \widetilde{ K_0 }\mathbb{Q} G\rightarrow \text{SC}(G) \rightarrow K_{-1} \mathbb{Z} G \rightarrow 0 $$
from Lemma \ref{shortexactsequencefornegativektheory}.
Now $K_{-1} \mathbb{Z} G = \mathbb{Z}^r \oplus (\mathbb{Z}/2)^s$ by Theorem \ref{negativektheory}. The abelian group $\widetilde{ K_0 }\mathbb{Q} G$ is f.g.\ free and maps of degree 1 of the form $\Sigma^{-1} \textbf{H} \mathbb{Z} \rightarrow \textbf{H} \mathbb{Z}$  are necessarily zero by Lemma \ref{mapsofdegreeoneandexactsequence}, since $\text{Ext}^1_\mathbb{Z}(\mathbb{Z}, \mathbb{Z}) = 0$ as $\mathbb{Z}$ is free. This means the map $ \Sigma^{-1} \textbf{H} K_{-1} \mathbb{Z} G \rightarrow \textbf{H} \widetilde{K_0 }\mathbb{Q}G $ further factors through the $2$-torsion, i.e. as 
$$\Sigma^{-1} \textbf{H} (\mathbb{Z}/2)^s \rightarrow \textbf{H} \widetilde{K_0 }\mathbb{Q}G. $$
The generators of $\widetilde{K_0 }\mathbb{Q}G$ are given by the isomorphism classes of non-trivial irreducible $\mathbb{Q}$-representations of $G$. By Theorem \ref{negativektheory}, each of these contributes to a single $\mathbb{Z}/2$-summand in $K_{-1} \mathbb{Z}G$ iff it has even global Schur index but odd local Schur index at every prime $p$ dividing the order of $G$, giving rise to a Bockstein morphism $\beta_2$. This means that we have $s$ many linearly independent singular characters $\frac{\chi_I}{2}$, one for each such irreducible representation $I$, which together span a copy of $\mathbb{Z}^s$ in $\text{SC}(G)$. Therefore we have the sub-exact sequence
$$ \mathbb{Z}^s \xrightarrow{2\cdot} \mathbb{Z}^s \rightarrow (\mathbb{Z}/2)^s $$
of the short exact sequence we started with. In other words, the map $\Sigma^{-1} \textbf{H} (\mathbb{Z}/2)^s \rightarrow \textbf{H} \widetilde{K_0 }\mathbb{Q}G$ factors further through $i$,
$$\Sigma^{-1} \textbf{H} (\mathbb{Z}/2)^s \xrightarrow{(\beta_2)^s} \textbf{H} \mathbb{Z}^s \xrightarrow{i} \textbf{H} \widetilde{K_0 }\mathbb{Q}G,$$
with $i$ being the inclusion of the subgroup of $\widetilde{K_0 }\mathbb{Q}G$ generated by all the irreducible $\mathbb{Q}$-representations that contribute to $s$.
\end{proof}

\section{The map $K_0 \mathbb{Z} G \rightarrow K_0 \mathbb{Q} G$ for infinite groups} \label{infinitegroups}

%
%
%

The following section is concerned with proving the main theorem. Throughout, assume that $G$ satisfies the Farrell-Jones conjecture and that $E \text{Fin}$ is a fixed model for the classifying space of finite subgroups together with a chosen CW-structure $(E \text{Fin}^{(k)})_{k \in \mathbb{N}}$. Write
$$ (f,g)  \colon \coprod_{i \in I} G/H_i \times S^0 \rightarrow \coprod_{j \in J} G/K_j $$
for the degree $0$ attaching map of $E\text{Fin}$ with the $H_i$ and $K_j$ being finite subgroups of $G$.
For a functor $F  \colon \text{Or}G \rightarrow \text{Ab}$ define
$$ \text{ker}^F \defeq \text{ker}( F(f)-F(g) )  \colon \bigoplus_{i \in I} F(G/H_i) \rightarrow \bigoplus_{j \in J} F(G/K_j). $$

\begin{theorem} \label{technicallemma}
There is an exact sequence
$$ 0 \rightarrow \text{ker}^{\widetilde{K_0 }\mathbb Q}  \rightarrow \text{ker}^{\mathrm{SC}} \rightarrow \text{ker}^{K_{-1} \mathbb{Z}} \rightarrow \text{im}( \widetilde{K_0 }\mathbb{Z}G \rightarrow \widetilde{K_0 }\mathbb{Q}G ) \rightarrow 0 $$
and the map $\text{ker}^{K_{-1} \mathbb{Z}} \rightarrow \text{im}( \widetilde{K_0 }\mathbb{Z}G \rightarrow \widetilde{K_0 }\mathbb{Q}G )$ is the connecting map induced from the snake lemma applied to the diagram
$$\xymatrix{
0 \ar[r]  & \bigoplus_{i \in I} \widetilde{K_0 }\mathbb Q(H_i) \ar[r]\ar[d] & \bigoplus_{i \in I} \mathrm{SC}(H_i) \ar[r] \ar[d] & \bigoplus_{i \in I} K_{-1} \mathbb{Z}(H_i)  \ar[r] \ar[d]^{f-g} & 0 \\
0 \ar[r] & \bigoplus_{j \in J} \widetilde{K_0 }\mathbb Q(K_j) \ar[r] & \bigoplus_{j \in J} \mathrm{SC}(K_j) \ar[r] & \bigoplus_{j \in J} K_{-1} \mathbb{Z}(K_j)  \ar[r] & 0.
}$$

\end{theorem}

Before we begin with the proof, we need a few more arguments. If $D$ is a $1$-category, then the category of functors $D \rightarrow \text{Ab} \subset \text{Sp}$ with values in the heart $\text{Ab}$ of $\text{Sp}$ is again a $1$-category and thus a natural transformation $\eta  \colon \textbf{A} \implies \textbf{B}$  between two functors $\textbf{A}, \textbf{B}  \colon D \rightarrow \text{Sp}$ with values in the heart is the zero map in the category $\text{Fun}(D , \text{Sp})$ if and only if its value on all the components $\eta_d  \colon \textbf{A}(d) \rightarrow \textbf{B}(d)$ is the zero homomorphism for all $d \in D$. Note that here it is essential that the category of functors with values in the heart of $\text{Sp}$ is again a $1$-category, it is not true in general that a natural transformation between two functors with values in spectra is zero if all its components are zero maps.

Now, since the map $\widetilde{K_0 }\mathbb{Z} H \rightarrow \widetilde{K_0 }\mathbb{Q} H $ vanishes for all finite subgroups $H$, the natural transformation $\textbf{H} \widetilde{K_0 }\mathbb{Z} - \implies \textbf{H} \widetilde{K_0 }\mathbb{Q} -$ becomes the zero map when restricted to the subcategory $ \text{Or}G_{\text{Fin}}$. Furthermore, $\textbf{Wh}(\mathbb Q; -)[-1,0]$ is as a functor on $\text{Or}G_{\text{Fin}}$ concentrated in degree $0$ since the negative $K$-theory of the group algebras $\mathbb{Q}H$ vanishes for $H$ being finite. By using the object-wise $t$-structure on $\text{Fun}(\text{Or}G_\text{Fin} , \text{Sp})$ (see Definition \ref{objectwisetstructure}), we are now in a position to apply Lemma \ref{usefullemma} with $\mathcal{C} = \text{Fun}(\text{Or}G_\text{Fin} , \text{Sp})$, and the map $f$ in question being the natural transformation $\textbf{Wh}(\mathbb Z; -)[-1,0] \implies \textbf{Wh}(\mathbb Q; -)[-1,0]$. Lemma \ref{usefullemma} states that the natural transformation of functors
$$\textbf{Wh}(\mathbb Z; -)[-1,0] \implies \textbf{Wh}(\mathbb Q; -)[-1,0]$$
descends to a unique natural transformation of functors $\text{Or}G_{\text{Fin}} \rightarrow \text{Sp} $,
$$ \Sigma^{-1} \textbf{H} K_{-1} \mathbb{Z} - \implies  \textbf{H} \widetilde{K_0 }\mathbb{Q} -. $$

\begin{proposition} \label{pi1matters}
If $G$ satisfies the Farrell-Jones conjecture, the image of the map $\widetilde{K_0 }\mathbb{Z} G \rightarrow \widetilde{K_0 }\mathbb{Q} G$ agrees with the image of
$$ \pi_1 E \text{Fin} \otimes_{\text{Or}G} \textbf{H} K_{-1} \mathbb{Z} \rightarrow \widetilde{K_0 }\mathbb{Q} G,$$
as well as with the image of the map
$$ \pi_1 E \text{Fin}^{(1)} \otimes_{\text{Or}G} \textbf{H} K_{-1} \mathbb{Z} \rightarrow \widetilde{K_0 }\mathbb{Q} G$$
induced by the inclusion $E \text{Fin}^{(1)} \subset E \text{Fin}$.
\end{proposition}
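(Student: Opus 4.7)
The plan is to combine Corollary \ref{plocallemma} with the factorization $\textbf{Wh}(\mathbb{Z};-)[-1,0] \implies \Sigma^{-1}\textbf{H}K_{-1}\mathbb{Z}- \implies \textbf{H}\widetilde{K_0\mathbb{Q}-}$ constructed in the paragraph preceding the proposition. By Corollary \ref{plocallemma} the image of $\widetilde{K_0\mathbb{Z}G} \to \widetilde{K_0\mathbb{Q}G}$ already agrees with the image of
$$\pi_0(E\text{Fin}_+ \wedge_{\text{Or}G} \textbf{Wh}(\mathbb{Z};-)) \to \pi_0(E\text{Fin}_+ \wedge_{\text{Or}G} \textbf{Wh}(\mathbb{Q};-)) \cong \widetilde{K_0\mathbb{Q}G}.$$
First I would pass to the $[-1,0]$-truncated functors on $\text{Or}G_\text{Fin}$. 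Since $K_{-i}\mathbb{Z}H=0$ for $i>1$ by Carter and $K_{-i}\mathbb{Q}H=0$ for $i>0$ when $H$ is finite, the canonical maps $\textbf{Wh}(R;-) \to \textbf{Wh}(R;-)[-1,0]$ are given on $\text{Or}G_\text{Fin}$ by the Postnikov truncation $\tau_{\leq 0}$, whose fiber $\tau_{\geq 1}\textbf{Wh}(R;-)$ is object-wise $1$-connective. Lemma \ref{homologypreservesconnectivity} then forces $E\text{Fin}_+ \wedge_{\text{Or}G} \tau_{\geq 1}\textbf{Wh}(R;-)$ to be $1$-connective, so the change of functor is invisible on $\pi_0$ of the orbit smash product, both for $R=\mathbb{Z}$ and $R=\mathbb{Q}$ (using that $\textbf{Wh}(\mathbb{Q};-)[-1,0]\simeq\textbf{H}\widetilde{K_0\mathbb{Q}-}$ on $\text{Or}G_\text{Fin}$).

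Next, I would apply $\pi_0(E\text{Fin}_+ \wedge_{\text{Or}G} -)$ to the factorization above to obtain
$$\pi_0(E\text{Fin}_+ \wedge_{\text{Or}G} \textbf{Wh}(\mathbb{Z};-)[-1,0]) \xrightarrow{\beta} \pi_1(E\text{Fin}_+ \wedge_{\text{Or}G} \textbf{H}K_{-1}\mathbb{Z}-) \to \widetilde{K_0\mathbb{Q}G}$$
of the previous map. The key step is verifying that $\beta$ is surjective. This will follow by applying the exact functor $E\text{Fin}_+ \wedge_{\text{Or}G} -$ to the $t$-structure fiber sequence $\textbf{H}\widetilde{K_0\mathbb{Z}-} \to \textbf{Wh}(\mathbb{Z};-)[-1,0] \to \Sigma^{-1}\textbf{H}K_{-1}\mathbb{Z}-$: the obstruction to surjectivity of $\beta$ on $\pi_0$ lies in $\pi_{-1}(E\text{Fin}_+ \wedge_{\text{Or}G} \textbf{H}\widetilde{K_0\mathbb{Z}-})$, which vanishes by Lemma \ref{homologypreservesconnectivity} since $\textbf{H}\widetilde{K_0\mathbb{Z}-}$ is object-wise connective. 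This gives the first identification of images.

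For the second identification, I would invoke Lemma \ref{homologypreservesconnectivity} once more with $\textbf{A}=\textbf{H}K_{-1}\mathbb{Z}-$ (object-wise connective) and $n=k=1$, which is precisely the epimorphism case and yields surjectivity of
$$\pi_1(E\text{Fin}^{(1)}_+ \wedge_{\text{Or}G} \textbf{H}K_{-1}\mathbb{Z}-) \to \pi_1(E\text{Fin}_+ \wedge_{\text{Or}G} \textbf{H}K_{-1}\mathbb{Z}-),$$
so the two images in $\widetilde{K_0\mathbb{Q}G}$ coincide. The main conceptual hurdle is the surjectivity of $\beta$ together with ensuring that the $t$-structural factorization from the preceding paragraph is genuinely compatible with the ring-change $\mathbb{Z}\hookrightarrow\mathbb{Q}$ after passage to orbit smash products; both rely on naturality of the object-wise $t$-structure on $\text{Fun}(\text{Or}G_\text{Fin},\text{Sp})$ and on the exactness of $E\text{Fin}_+ \wedge_{\text{Or}G} -$. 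The remaining arguments are bookkeeping of connectivity, carried out uniformly by Lemma \ref{homologypreservesconnectivity}.
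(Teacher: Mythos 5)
Your proposal is correct and follows essentially the same route as the paper: reduce via Corollary \ref{plocallemma} to the orbit smash products of Whitehead spectra, pass to the $[-1,0]$-truncations, use the connectivity of $\textbf{H}\widetilde{K_0\mathbb{Z}-}$ in the $t$-structure fiber sequence to get surjectivity onto $\pi_1(E\text{Fin}_+ \wedge_{\text{Or}G}\textbf{H}K_{-1}\mathbb{Z}-)$, and finish with Lemma \ref{homologypreservesconnectivity} for the $1$-skeleton. You are slightly more explicit than the paper about why replacing $\textbf{Wh}(R;-)$ by its $[-1,0]$-truncation does not change $\pi_0$ after smashing, but the core argument is identical.
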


\begin{proof}
The fiber sequence
$$ \textbf{H} \widetilde{K_0 }\mathbb{Z} - \implies \textbf{Wh}(\mathbb Z; G)[-1,0] \implies  \Sigma^{-1} \textbf{H} K_{-1} \mathbb{Z} -$$
of functors leads to the exact sequence \\
\begin{tikzcd}[row sep=1.5em, column sep = 0.5em]
\cdots \arrow[r] & \pi_0 E \text{Fin} \otimes_{\text{Or}G} ( \textbf{Wh}(\mathbb Z; -)[-1,0] ) \arrow[rr] &[-5.5em] \arrow[d, phantom, ""{coordinate, name=Z}] &[-5.5em] \pi_0 E \text{Fin} \otimes_{\text{Or}G} \Sigma^{-1} \textbf{H} K_{-1} \mathbb{Z} - \arrow[dl,rounded corners=8pt,curvarr=Z] \\
  & & \pi_{-1} E \text{Fin} \otimes_{\text{Or}G} \textbf{H} \widetilde{K_0 }\mathbb{Z} - \arrow[r] & \cdots.
\end{tikzcd} \\

Since $\textbf{H} \widetilde{K_0 }\mathbb{Z} -$ is a connective functor and smashing with a $G$-space preserves connectivity, the group $\pi_{-1} E \text{Fin} \otimes_{\text{Or}G} \textbf{H} \widetilde{K_0 }\mathbb{Z} -$ vanishes, which means that the map 
$$ \pi_0 E \text{Fin} \otimes_{\text{Or}G} ( \textbf{Wh}(\mathbb Z; -)[-1,0] ) \rightarrow \pi_0 E \text{Fin} \otimes_{\text{Or}G} \Sigma^{-1} \textbf{H} K_{-1} \mathbb{Z} -$$
is an epimorphism.

As discussed before, we have a commuting triangle of natural transformations of functors $\text{Or}G_{\text{Fin}} \rightarrow \text{Sp} $,
$$\xymatrix{
\textbf{Wh}(\mathbb Z; -)[-1,0] \ar@{=>}[d] \ar@{=>}[r] & \textbf{H} \widetilde{K_0 }\mathbb{Q} - \\
\Sigma^{-1} \textbf{H} K_{-1} \mathbb{Z} \ar@{=>}[ru] &
}$$
Taking orbit tensor products with $E \text{Fin}$ and using that $\widetilde{K_0 }\mathbb{Q} - $ satisfies finite assembly, see Remark \ref{rationalktheoryfiniteassembly}, we get the triangle
$$\xymatrix{
\pi_0 E \text{Fin} \otimes_{\text{Or}G} (\textbf{Wh}(\mathbb Z; -)[-1,0]) \ar@{->>}[d] \ar[r] & \widetilde{K_0 }\mathbb{Q} G \\
\pi_1 E \text{Fin} \otimes_{\text{Or}G} \textbf{H} K_{-1} \mathbb{Z}, \ar[ru] &
}$$
which together with Corollary \ref{plocallemma} proves the first statement.

For the second statement we use Lemma \ref{homologypreservesconnectivity} to get that 
$$\pi_1 E \text{Fin}^{(1)} \otimes_{\text{Or}G} \textbf{H} K_{-1} \mathbb{Z}  \rightarrow \pi_1 E \text{Fin} \otimes_{\text{Or}G} \textbf{H} K_{-1} \mathbb{Z} $$
is an epimorphism. This allows us to reduce further to the image of the composition
\begin{align*}
\pi_1 E \text{Fin}^{(1)} \otimes_{\text{Or}G} \textbf{H} K_{-1} \mathbb{Z} \twoheadrightarrow  \pi_1 E \text{Fin} \otimes_{\text{Or}G} \textbf{H} K_{-1} \mathbb{Z} \rightarrow & \pi_0 E \text{Fin} \otimes_{\text{Or}G} \textbf{H} \widetilde{K_0 }\mathbb{Q} \\ 
& \cong \widetilde{K_0 }\mathbb{Q} G.
\end{align*}
\end{proof}

\begin{proof}[Proof of Theorem \ref{technicallemma}]
We can already reduce the image of
$$ \widetilde{K_0 }\mathbb{Z} G \rightarrow \widetilde{K_0 }\mathbb{Q} G$$
to that of the map
$$ \pi_1{E \text{Fin}^{(1)} \otimes_{\text{Or}G} \textbf{H} K_{-1} \mathbb{Z} } \rightarrow \widetilde{K_0 }\mathbb{Q} G $$
thanks to Corollary \ref{pi1matters}.

Applying Lemma \ref{1dimensionalclassifyingspace} to the natural transformation
$$\textbf{H} K_{-1} \mathbb{Z} - \implies  \Sigma \textbf{H} \widetilde{K_0 }\mathbb{Q} -,$$ we get the following commutative diagram of spectra
\[ \tag{$\ast$} \begin{split} \xymatrix{
\textbf{H} ( \bigoplus_{ i \in I }  K_{-1} \mathbb{Z} H_i ) \ar[r] \ar[d]^{f-g} & \Sigma \textbf{H} ( \bigoplus_{ i \in I }  \widetilde{K_0 }\mathbb{Q} H_i ) \ar[d] \\
\textbf{H} ( \bigoplus_{ j \in J }  K_{-1} \mathbb{Z} K_j ) \ar[r]  \ar[d] & \Sigma \textbf{H} ( \bigoplus_{ j \in J }  \widetilde{K_0 }\mathbb{Q} K_j) \ar[d] \\
E \text{Fin}^{(1)} \otimes_{\text{Or}G} \textbf{H} K_{-1} \mathbb{Z} \ar[r] & \Sigma  E \text{Fin}^{(1)} \otimes_{\text{Or}G} \textbf{H} \widetilde{K_0 }\mathbb{Q}
}  \end{split} \] 
with the columns being fiber sequences.

By Lemma \ref{tsnakelemma}, the map induced on $\pi_1$ on the cofibers is equivalent to the map induced by the snake lemma of the diagram
$$\xymatrix{
0 \ar[r]  & \bigoplus_{i \in I} \widetilde{ K_0 }\mathbb{Q}H_i \ar[r] \ar[d] &  \bigoplus_{i \in I} \text{SC}(H_i) \ar[r] \ar[d] & \bigoplus_{i \in I} K_{-1} \mathbb{Z}H_i \ar[r] \ar[d] &  0  \\
0 \ar[r] & \bigoplus_{i \in I} \widetilde{ K_0 }\mathbb{Q}K_j \ar[r] &  \bigoplus_{i \in I} \text{SC}(K_j) \ar[r] &  \bigoplus_{i \in I} K_{-1} \mathbb{Z}K_j \ar[r] & 0
}$$
with exact rows.

This means we get the claimed exact sequence
$$ 0 \rightarrow \text{ker}^{\widetilde{K_0 }\mathbb Q}  \rightarrow \text{ker}^{\text{SC}} \rightarrow \text{ker}^{K_{-1} \mathbb{Z}} \rightarrow \text{im}( \widetilde{K_0 }\mathbb{Z}G \rightarrow \widetilde{K_0 }\mathbb{Q}G ) \rightarrow 0. $$
%
\end{proof}


\section{Virtually cyclic groups}

A group $G$ is called virtually cyclic if it contains a cyclic subgroup of finite index. Virtually cyclic groups can be classified into three families of groups.

\begin{lemma}[See \cite{hempel}, Lemma 11.4.] A group $G$ is virtually cyclic if it is of one of the three forms
\begin{itemize}
\item $G$ is finite.
\item $G$ is finite-by-infinite cyclic. This means that there is an exact sequence of groups
$$ 1 \rightarrow H \rightarrow G \rightarrow C_{\infty} \rightarrow 1 $$
with $H$ being finite, and $C_{\infty}$ an infinite cyclic group. We will call $G$ of type $\text{VC}1$.
\item $G$ is finite-by-infinite dihedral. This means that there is an exact sequence of groups
$$ 1 \rightarrow H \rightarrow G \rightarrow D_{\infty} \rightarrow 1 $$
with $H$ being finite, and $D_{\infty}$ an infinite dihedral group. We will call $G$ of type $\text{VC}2$.
\end{itemize}
\end{lemma}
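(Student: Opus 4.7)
The plan is to prove both directions. The easy direction: a finite group is trivially virtually cyclic; in a type-VC1 group $1 \to H \to G \to C_\infty \to 1$, lifting a generator of $C_\infty$ to $g \in G$ gives $\langle g \rangle$ infinite cyclic of index $|H|$; and in a type-VC2 group, the preimage of the index-$2$ subgroup $\mathbb{Z} \subset D_\infty$ is of type VC1 and hence virtually cyclic by the previous case, so $G$ is too.

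For the converse, suppose $G$ is infinite and contains an infinite cyclic subgroup $C$ of finite index. Passing to the normal core $N := \bigcap_{g \in G} g C g^{-1}$ gives a normal infinite cyclic subgroup of finite index in $G$. The conjugation action on $N$ yields a homomorphism $\rho \colon G \to \mathrm{Aut}(N) \cong \{\pm 1\}$, and the proof splits into Case A (image trivial, producing VC1) and Case B (image surjective, producing VC2).

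In Case A, $N$ is central in $G$, so $G$ is a central extension of the finite group $Q := G/N$ by $\mathbb{Z}$. A Hochschild--Serre argument (using that $H_p(Q;\mathbb{Q}) = 0$ for $p \geq 1$ since $Q$ is finite) gives $\dim_{\mathbb{Q}} H_1(G;\mathbb{Q}) = 1$, so composing the abelianization with projection to the torsion-free part yields a surjection $\pi \colon G \twoheadrightarrow \mathbb{Z}$. For $F := \ker \pi$, the image of $N$ in $G/F = \mathbb{Z}$ is a torsion-free subgroup of $\mathbb{Z}$ and cannot be zero (else $N \subseteq F$ would force $[G:F] \leq [G:N] < \infty$). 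Hence $F \cap N = \{1\}$, so $F$ embeds into $Q$, is finite, and coincides with the torsion subgroup of $G$, giving $G/F \cong \mathbb{Z}$. In Case B, $K := \ker \rho$ has index $2$ in $G$ with $N$ central in $K$, and Case A applied to $K$ supplies a finite characteristic subgroup $F \leq K$ with $K/F \cong \mathbb{Z}$; characteristicness promotes $F$ to a normal subgroup of $G$. The induced action of $G/K \cong \mathbb{Z}/2$ on $K/F \cong \mathbb{Z}$ restricts to inversion on the finite-index image $NF/F$ of $N$, and since any automorphism of $\mathbb{Z}$ is determined by its restriction to a finite-index subgroup, the action on all of $K/F$ is inversion. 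The vanishing $H^2(\mathbb{Z}/2;\mathbb{Z}^{-}) = 0$ then implies that $1 \to \mathbb{Z} \to G/F \to \mathbb{Z}/2 \to 1$ splits, yielding $G/F \cong D_\infty$.

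The main obstacle I expect is verifying the finiteness of $F$ in Case A, which hinges on the torsion-freeness of $\mathbb{Z} = G/F$ forcing the dichotomy $F \cap N \in \{\{1\}, N\}$, with $F \supseteq N$ ruled out by counting indices. A conceptually cleaner but heavier alternative would be to invoke Stallings' theorem on ends of groups, which identifies any two-ended group directly as a semidirect product of a finite group with $\mathbb{Z}$ (Case A) or an amalgamated product of two finite groups over a common index-$2$ subgroup (Case B), but the approach outlined above remains elementary modulo standard cohomology of finite groups.
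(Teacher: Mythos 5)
The paper does not prove this lemma; it cites \cite{hempel}, Lemma~11.4, and treats the classification as a black box. Your proof is therefore a self-contained argument for a result the paper takes for granted, and it is correct. The easy direction is handled properly (for VC2, passing to the preimage of $\mathbb{Z} \le D_\infty$ is exactly the right move). For the converse, your reduction is the standard one: replace the finite-index infinite cyclic $C$ by its normal core $N$ (which is finite-index because $C$ has only finitely many conjugates, and infinite cyclic because it is an infinite subgroup of $C \cong \mathbb{Z}$ --- worth stating explicitly), and branch on the image of $G \to \mathrm{Aut}(N) \cong \{\pm 1\}$. The Hochschild--Serre computation in Case~A is correct, though one can bypass spectral sequences entirely: Schur's theorem says $[G,G]$ is finite when $N \le Z(G)$ has finite index, so $G^{\mathrm{ab}}$ contains the finite-index image of $N$ and hence has free rank exactly~$1$. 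Your finiteness argument for $F = \ker\pi$ is right, and the observation that $F$ is precisely the torsion of $G$ --- every torsion element dies in $G/F \cong \mathbb{Z}$ and every element of the finite $F$ is torsion --- is what makes $F$ characteristic in $K$ in Case~B and thus normal in $G$; this step is load-bearing and you correctly flagged it. Case~B is also correct: the index-two complement $G/K$ must act on $K/F \cong \mathbb{Z}$ by inversion because it does so on the finite-index subgroup $NF/F$, and $H^2(C_2;\mathbb{Z}^{-}) = 0$ forces the extension to split, giving $G/F \cong D_\infty$. In short, the argument is sound and fills in what the paper leaves to a citation; the only suggested polish is to supply the two one-line justifications noted above (that $N$ is infinite cyclic, and that $F$ equals the torsion of the group) rather than assert them.
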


Before we begin with the analysis of type $1$ or type $2$ virtually cyclic groups, we state some properties of their negative $K$-theory.

\subsection{Negative $K$-theory of virtually cyclic groups}

The following theorem due to Farrell, Jones extends Carter's results to virtually cyclic groups.
\begin{theorem}[\cite{Farrell_Jones_lower_1995}, Theorem 2.1.] \label{negativektheoryvirtuallycyclic}
Let $G$ be a virtually infinite cyclic group. Then
\begin{enumerate}[label=\textup{(\alph*)}]
\item $K_n \mathbb Z G = 0$ for all integers $n \leq -2$. 
\item $K_{-1} \mathbb{Z} G$ is generated by the images of $K_{-1} \mathbb{Z} F$ under the maps induced by
the inclusions $F \subset G$ where $F$ varies over representatives of the conjugacy
classes of finite subgroups of $G$.
\item $K_{-1} \mathbb{Z} G$ is a finitely generated abelian group. 
\end{enumerate}
\end{theorem}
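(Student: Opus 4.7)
The plan is to decompose according to the classification of virtually cyclic groups into VC1 and VC2 and exploit the one-dimensional models for $E(G;\textup{Fin})$ recorded in Lemma \ref{pushoutsquareforVC1} and Lemma \ref{vc2pushout}. In the VC1 case, writing $H$ for the maximal finite normal subgroup and $\tilde{t}$ for a lift of a generator of $G/H \cong C_\infty$, applying Lemma \ref{1dimensionalclassifyingspace} to $\textbf{K}\mathbb{Z}(-)$ produces a cofiber sequence
$$ \textbf{K}\mathbb{Z}H \xrightarrow{1-\tilde{t}_*} \textbf{K}\mathbb{Z}H \rightarrow E(G;\textup{Fin})_+ \wedge_{\text{Or}G} \textbf{K}\mathbb{Z}, $$
and in the VC2 case, writing $G = K_1 *_H K_2$, a cofiber sequence
$$ \textbf{K}\mathbb{Z}H \rightarrow \textbf{K}\mathbb{Z}K_1 \vee \textbf{K}\mathbb{Z}K_2 \rightarrow E(G;\textup{Fin})_+ \wedge_{\text{Or}G} \textbf{K}\mathbb{Z}. $$
All spectra appearing on the left are $K$-theory spectra of finite group rings, whose negative homotopy groups are completely pinned down by Carter's Theorem \ref{negativektheory}.

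Next I would exploit the fact that $G$, being virtually cyclic, trivially satisfies the Farrell--Jones conjecture, so $\textbf{K}\mathbb{Z}G \simeq E\textup{VCyc}_+ \wedge_{\text{Or}G} \textbf{K}\mathbb{Z}$. Theorem \ref{fintovcycissplit} then exhibits $E\textup{Fin}_+ \wedge_{\text{Or}G} \textbf{K}\mathbb{Z}$ as a natural split summand of $\textbf{K}\mathbb{Z}G$, with complement a spectrum $\textbf{N}$ that I will call the \emph{Nil summand}. The hard part of the argument is to show that $\pi_n \textbf{N} = 0$ for $n \leq -1$: in the VC1 case this is the vanishing of the lower Bass--Nil groups $NK_n(\mathbb{Z}H;\alpha)$ associated with the conjugation automorphism $\alpha$ of $H$ induced by $\tilde{t}$, and in the VC2 case it is the vanishing of Waldhausen's lower Nil groups for the amalgamated product $K_1 *_H K_2$. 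I expect this step to be the main obstacle, since it cannot be extracted from anything developed earlier in the paper; the standard route is by direct computation using Bass's fundamental theorem, iterated downward, combined with the explicit description of Nil elements as nilpotent matrices.

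Granting the Nil vanishing, the three assertions follow formally from the long exact sequences of the cofiber sequences above. For (a), Carter's theorem gives $K_n\mathbb{Z}F = 0$ for every finite $F$ and every $n \leq -2$, so the long exact sequences force $\pi_n(E\textup{Fin}_+ \wedge_{\text{Or}G} \textbf{K}\mathbb{Z}) = 0$ in that range, and combined with $\pi_n\textbf{N}=0$ this yields $K_n\mathbb{Z}G = 0$ for $n \leq -2$. For (b), the tail end of the same long exact sequences displays $K_{-1}\mathbb{Z}G$ as a cokernel of a map between $K_{-1}\mathbb{Z}$-groups of the finite subgroups appearing in the pushout ($H$ in the VC1 case, $K_1$ and $K_2$ in the VC2 case), which already represent the conjugacy classes of maximal finite subgroups of $G$; since every finite subgroup of $G$ is conjugate into one of these, $K_{-1}\mathbb{Z}G$ is generated by the images of the $K_{-1}\mathbb{Z}F$ with $F$ finite. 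Part (c) is then immediate from Carter's theorem, which further tells us that each $K_{-1}\mathbb{Z}F$ is finitely generated, so $K_{-1}\mathbb{Z}G$ is a quotient of a finitely generated abelian group and hence itself finitely generated.
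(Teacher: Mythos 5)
This statement is quoted in the paper with a citation to \cite{Farrell_Jones_lower_1995} and is not proved there, so there is no internal argument to compare yours against. Your reduction is organized correctly: for a virtually cyclic group $G$ the Farrell--Jones conjecture holds trivially (with respect to the family $\textup{VCyc}$), Theorem \ref{fintovcycissplit} splits off $E\textup{Fin}_+ \wedge_{\text{Or}G} \textbf{K}\mathbb{Z}$ as a natural summand of $\textbf{K}\mathbb{Z}G$, and the one-dimensional models for $E(G;\textup{Fin})$ combined with Lemma \ref{1dimensionalclassifyingspace} give the two cofiber sequences you write. Granting the vanishing of the complementary (Nil) summand in degrees $\leq -1$, your derivations of (a), (b), (c) from Carter's Theorem \ref{negativektheory} and the associated long exact sequences are all correct; in particular the identification of $K_{-1}\mathbb{Z}G$ as a cokernel involving $K_{-1}\mathbb{Z}H$, $K_{-1}\mathbb{Z}K_1$, $K_{-1}\mathbb{Z}K_2$, together with the observation that every finite subgroup of $G$ is subconjugate to an isotropy group of the chosen model, is the right way to get (b).

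The gap you flag is, however, precisely the theorem. The vanishing of the twisted Bass--Nil groups $NK_n(\mathbb{Z}H;\alpha)$ for $n \leq -1$ in the VC1 case, and of Waldhausen's lower Nil groups for $K_1 *_H K_2$ in the VC2 case, with $H$, $K_1$, $K_2$ finite, is essentially all the technical content of the cited Farrell--Jones paper. Nothing developed in the present paper --- not Carter's theorem, not the assembly-splitting of \cite{Bartels_2003}, not the $t$-structure and coend machinery of the appendix --- touches Nil groups, and one additionally has to identify the split complement of $E\textup{Fin}_+ \wedge_{\text{Or}G}\textbf{K}\mathbb{Z}$ inside $\textbf{K}\mathbb{Z}G$ with those Nil spectra in the first place, which requires the twisted Bass--Heller--Swan decomposition (respectively Waldhausen's spectrum-level splitting, quoted but not exploited at that level in Section \ref{virtuallycyclictype2}). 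So what you have is a correct reorganization of the standard deduction from known Nil vanishing, with the hard input left as a black box, as you acknowledge. Since the paper itself treats the theorem purely as a citation, that division of labor is defensible, but it does mean your proposal is a reduction rather than a self-contained proof.
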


This has a few implications for groups that satisfy the Farrell Jones conjecture.

\begin{corollary} \label{negativektheoryfiniteassembly}
Let $G$ be a group satisfying the Farrell Jones conjecture. Then
\begin{itemize}
\item $K_n \mathbb Z G = 0$ for all integers $n \leq -2$. 
\item The functor $K_{-1} \mathbb{Z} -$ satisfies finite assembly in the sense that 
$$ K_{-1} \mathbb{Z} G \cong \text{colim}_{G/H \in \text{Or}G_\text{Fin}} K_{-1} \mathbb{Z} H.$$
\item Let $k$ be a field of characteristic $0$. Then $K_n k G = 0$ for all integers $n \leq -1$.
\end{itemize}
\end{corollary}

\begin{proof}
Theorem \ref{negativektheoryvirtuallycyclic} (a) implies that the functor $ \textbf{K} \mathbb{Z} -$ is $(-1)$-connective, when restricted to the category $\text{Or}G_\text{VCyc}$ and thus
$$\textbf{K} \mathbb{Z} G \simeq \text{colim}_{G/H \in \text{Or}G_\text{VCyc}} \textbf{K} \mathbb{Z} H$$
is $(-1)$-connective as well, since the subcategory of $(-1)$-connective spectra is closed under colimits. This implies the first statement. Furthermore, since the functor $\pi_{-1}$ is a left adjoint when restricted to $(-1)$-connective spectra, we get the isomorphism
$$ K_{-1} \mathbb{Z} G \cong \text{colim}_{G/H \in \text{Or}G_\text{VCyc}} K_{-1} \mathbb{Z} H.$$
Hence to show the second statement we need to show that
$$ \text{colim}_{G/H \in \text{Or}G_\text{Fin}} K_{-1} \mathbb{Z}H \rightarrow \text{colim}_{G/H \in \text{Or}G_\text{VCyc}} K_{-1} \mathbb{Z}H $$
induced by the inclusion $\text{Or}G_\text{Fin} \subset \text{Or}G_\text{VCyc}$ is an isomorphism. Theorem \ref{fintovcycissplit} already states that it is injective. Surjectivity is implied by Theorem \ref{negativektheoryvirtuallycyclic} (b).
The statement that $K_n k G = 0$ for $k$ a field of characteristic $0$ and $n  \leq -1$ follows similarly from the observation that $ K_n  k -$ satisfies finite assembly, see \ref{rationalktheoryfiniteassembly} and that the negative $K$-theory of $k H$ vanishes for finite groups $H$, see \ref{negativektheoryfield}.
\end{proof}

\subsection{Virtually cyclic groups of type 1}

In the following fix a group $G$ of type VC1 and write $H$ for the unique maximal finite subgroup. Write $\pi  \colon G \rightarrow G/H \cong C_\infty$ for the canonical projection. Since the kernel $H$ of $G \rightarrow C_\infty$ is finite, the following is easy to show:

\begin{lemma}
Let $G$ be of type VC1. Then a model of the classifying space $E(G;\text{Fin})$ is given by $\mathbb{R}$ with the action lifted from the translation action of $C_\infty = G/H$.
\end{lemma}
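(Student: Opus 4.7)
The plan is to verify the defining properties of a classifying space for the family of finite subgroups: namely that $\mathbb{R}^K$ is contractible for all finite $K \leq G$ and empty for all infinite $K \leq G$, together with exhibiting an honest $G$-CW structure. First I would make the action precise: the translation action of $C_\infty$ on $\mathbb{R}$ determines (after choosing an isomorphism $C_\infty \cong \mathbb{Z}$ acting by $x \mapsto x+1$) a $G$-action through the projection $\pi \colon G \twoheadrightarrow C_\infty$. By construction $H = \ker(\pi)$ acts trivially on $\mathbb{R}$.

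Next, for the fixed-point analysis: if $K \leq G$ is finite, then $\pi(K)$ is a finite subgroup of $C_\infty$, hence trivial, so $K \subseteq H$ and therefore $\mathbb{R}^K = \mathbb{R}$, which is contractible. Conversely, if $K \leq G$ is infinite, then $K/(K \cap H)$ is infinite since $H$ is finite, so $\pi(K)$ is an infinite subgroup of $C_\infty$; it thus contains an element acting on $\mathbb{R}$ by a nonzero translation, which has no fixed points, giving $\mathbb{R}^K = \emptyset$.

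Finally, I would exhibit the $G$-CW structure. The subset $\mathbb{Z} \subset \mathbb{R}$ is $G$-invariant, and $G$ acts transitively on $\mathbb{Z}$ through $C_\infty$, with isotropy of any integer equal to $H$; thus $\mathbb{Z} \cong G/H$ as a $G$-set and serves as the $0$-skeleton, given by a single orbit $G/H \times S^0 \hookrightarrow G/H \times D^0$. The unit intervals $[n, n+1]$ form a single $G$-orbit of $1$-cells, also with isotropy $H$, giving the pushout
\[
\begin{tikzcd}
G/H \times S^0 \ar[r] \ar[d] & G/H \ar[d] \\
G/H \times D^1 \ar[r] & \mathbb{R}
\end{tikzcd}
\]
which realises $\mathbb{R}$ as a $1$-dimensional $G$-CW-space of the form considered in Lemma \ref{1dimensionalclassifyingspace}. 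None of these steps presents a substantive obstacle; the only point to be slightly careful about is the verification that every finite subgroup of $G$ factors through $H$, which is an immediate consequence of the finiteness of $H$ and the torsion-freeness of $C_\infty$.
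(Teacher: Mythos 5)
Your proof is correct and is exactly the argument the paper has in mind; the paper itself merely asserts the lemma as "easy to show" and then immediately displays the same pushout square you construct, so there is no discrepancy. One small notational slip: in describing the $0$-skeleton you write "$G/H \times S^0 \hookrightarrow G/H \times D^0$," but $S^0$ has two points while $D^0$ is a single point, so there is no such inclusion; what you mean is simply that the $0$-skeleton is a single free $G/H$-orbit (attached via the vacuous map $G/H \times S^{-1} = \emptyset \rightarrow \emptyset$), which is what your subsequent pushout correctly encodes.
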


A $C_\infty$-CW-structure of $\mathbb{R}$ with the translation action can be described with the following pushout square.
$$\begin{tikzcd}
C_\infty / 1 \times S^0 \arrow[r, "{(id , t \cdot )}"] \arrow[d] & C_\infty / 1 \arrow[d] \\
C_\infty / 1 \times D^1 \arrow[r] & \mathbb{R}
\end{tikzcd}$$
This generalizes for $G$ being of type VC1 in the following way. Let $\tilde{t} \in G$ be a choice of lift of the generator $t$ in $C_\infty$. Then the following is a pushout square of $G$-spaces.
$$\begin{tikzcd}
G / H \times S^0 \arrow[r, "{(id, \tilde{t}\cdot)}"] \arrow[d] & G / H \arrow[d] \\
G / H \times D^1 \arrow[r] & \mathbb{R}
\end{tikzcd} \label{pushoutsquareforVC1}$$
Applying Lemma \ref{1dimensionalclassifyingspace} now states that if $F$ is any functor $ \text{Or}G \rightarrow \text{Sp}$,  then there is a fiber sequence
$$F(H) \xrightarrow{1-\tilde{t}} F(H) \rightarrow E(G;\text{Fin}) \otimes F.$$

The functors $K_0 \mathbb{Q}-$ and $K_{-1} \mathbb{Z}-$ satisfy finite assembly (see Remark \ref{rationalktheoryfiniteassembly} as well as Corollary \ref{negativektheoryfiniteassembly}). We thus have the exact sequences
$$ K_0 \mathbb{Q} H \xrightarrow{ 1 - t } K_0 \mathbb{Q} H \rightarrow K_0 \mathbb{Q} G \rightarrow 0$$
and 
$$ K_{-1} \mathbb{Z} H \xrightarrow{ 1 - t } K_{-1} \mathbb{Z} H \rightarrow K_{-1} \mathbb{Z} G \rightarrow 0.$$

\begin{theorem} \label{ktheoryoverrationalsofvirtuallycyclicisfree}
Let $G$ be a group of type $\text{VC}1$. Then $K_0 \mathbb{Q} G$ is a finitely generated and free abelian group.
\end{theorem}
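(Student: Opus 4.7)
The plan is to exploit the one-line fiber sequence already derived from the pushout model of $E(G; \textup{Fin}) = \mathbb{R}$, together with the fact from Remark \ref{rationalktheoryfiniteassembly} that $K_0 \mathbb{Q}(-)$ satisfies finite assembly (virtually cyclic groups satisfy the Farrell-Jones conjecture). Combining these gives the presentation
$$ K_0 \mathbb{Q} H \xrightarrow{1-t} K_0 \mathbb{Q} H \rightarrow K_0 \mathbb{Q} G \rightarrow 0 $$
already recorded above, so that $K_0 \mathbb{Q} G$ is the cokernel of $1-t$ on the free abelian group $K_0 \mathbb{Q} H$. Finite generation is then immediate from $K_0 \mathbb{Q} H \cong \mathbb{Z}^{r_\mathbb{Q}(H)}$, and the whole task is to see that this cokernel is \emph{free}.

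The key observation is to identify the action of $t$. By Theorem \ref{ktheoryfunctor} the endomorphism $t$ is induced by conjugation $c_{\tilde t}\colon H \to H$, $h \mapsto \tilde t^{-1} h \tilde t$. Since $H$ is normal in $G$ this is indeed a group automorphism of $H$, and on $K_0 \mathbb{Q} H$, viewed with its canonical $\mathbb{Z}$-basis indexed by the isomorphism classes of irreducible $\mathbb{Q}$-representations of $H$, the automorphism $c_{\tilde t}$ acts by pulling back representations, which permutes this basis. Moreover, because $\textup{Aut}(H)$ is a finite group (as $H$ is finite), the automorphism $c_{\tilde t}$ has finite order in $\textup{Aut}(H)$, so $t$ acts on $K_0 \mathbb{Q} H$ as a permutation of finite order.

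It therefore suffices to prove the elementary linear-algebraic fact that for any permutation $\sigma$ of a finite set $\{1, \dots, n\}$, the cokernel of $1-\sigma$ acting on $\mathbb{Z}^n$ is free abelian of rank equal to the number of $\sigma$-orbits. This is a direct calculation: decompose into $\sigma$-orbits, on an orbit of length $k$ the relations $e_i - e_{\sigma(i)} = 0$ identify all basis vectors of the orbit, leaving one free generator per orbit. Applying this with $\sigma$ the permutation induced by $c_{\tilde t}$ on $\textup{Irr}_\mathbb{Q}(H)$ yields that $K_0 \mathbb{Q} G$ is free abelian of rank equal to the number of $\langle c_{\tilde t}\rangle$-orbits on $\textup{Irr}_\mathbb{Q}(H)$, completing the proof.

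There is no real obstacle here; the only point that needs a line of justification is the identification of the action of $t$ as a permutation of the canonical basis, which follows from the Wedderburn decomposition together with the description of $c_{\tilde t}$ in the last bullet of Theorem \ref{ktheoryfunctor}.
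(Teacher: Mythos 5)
Your proof is correct and follows essentially the same approach as the paper's: use finite assembly to realize $K_0 \mathbb{Q} G$ as $\mathrm{cok}(1-t)$ on $K_0 \mathbb{Q} H$, identify the $t$-action (via Theorem \ref{ktheoryfunctor}) as a permutation of the basis $\mathrm{Irr}_\mathbb{Q}(H)$, and conclude that the cokernel is free on the set of orbits. The remark about $c_{\tilde t}$ having finite order in $\mathrm{Aut}(H)$ is harmless but unnecessary, since any permutation of the finite set $\mathrm{Irr}_\mathbb{Q}(H)$ automatically has finite order; the orbit argument needs nothing more.
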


\begin{proof}[Proof of Theorem \ref{ktheoryoverrationalsofvirtuallycyclicisfree}]
Since $K \mathbb{Q} -$ satisfies finite assembly, as remarked above, we have the exact sequence
$$ K_0 \mathbb{Q} H \xrightarrow{ 1 - t } K_0 \mathbb{Q} H \rightarrow K_0 \mathbb{Q} G \rightarrow 0.$$

What is left to understand is the action of $t$ on $K_0 \mathbb{Q} H$. The endomorphisms of the object $G/H$ in the category $\text{Or}G$ are equal to $N(H)/H = G/H = C_\infty = \left\langle t \right\rangle$. By Theorem \ref{ktheoryfunctor} this action of $t$ on $K_0 \mathbb{Q} H$ sends a representation $V = (V,\rho)$ to the representation $V_t \defeq (V, \rho(\tilde{t} (-) \tilde{t}^{-1}) ) $. Let $\text{Irr}_\mathbb{Q}(H)$ be the set of isomorphism classes of irreducible representations of $H$ over $\mathbb{Q}$. If $V$ is irreducible, then so is $V_t$, hence
$$K_0 \mathbb{Q} G \cong \text{cok}( 1 - t ) \cong (K _0 \mathbb{Q} H)_{C_\infty} = \mathbb{Z}[ \text{Irr}_\mathbb{Q}(H) ]_{C_\infty} \cong \mathbb{Z}[ \text{Irr}_\mathbb{Q}(H) /  \equiv  ]$$
with $\equiv$ being the equivalence relation generated by $V \equiv V_t$. Hence $K_0 \mathbb{Q} G$, is free generated by the finite set of $C_\infty$-equivalence classes of rational irreducible representations of $H$.
\end{proof}

\begin{corollary} \label{rationalizationtrivialforvirtuallycyclic}
The map $\widetilde{K_0 }\mathbb{Z} G \rightarrow \widetilde{K_0 }\mathbb{Q} G$ is trivial for $G$ of type VC1. 
\end{corollary}

\begin{proof} A virtually cyclic group trivially satisfies the Farrell-Jones conjecture. Theorem \ref{FJimpliesrationalvanishing} implies that the image of the map $\widetilde{K_0 }\mathbb{Z} G \rightarrow \widetilde{K_0 }\mathbb{Q} G$ is torsion, which has to be trivial, since Theorem \ref{ktheoryoverrationalsofvirtuallycyclicisfree} states that $\widetilde{K_0 }\mathbb{Q} G$ is free.
\end{proof}
\subsection{Virtually cyclic groups of type 2} \label{virtuallycyclictype2}

In the following fix a group $G$ of type VC2. Write $\pi  \colon G \rightarrow G/H \cong D_\infty$ for the canonical projection. Let $H$ be the kernel of $\pi$. It is not difficult to show that a group $G$ is of type $\text{VC}2$ iff $G \cong K_1 *_H K_2$, where $K_1$ and $K_2$ are two finite groups that both contain $H$ as an index $2$ subgroup (see e.g. \cite{Lima_Goncalves_2013}, Theorem 17). Waldhausen \cite{waldhausen_free_products} showed that in this case there is a fiber sequence
$$ \textbf{K} RH \rightarrow \textbf{K} RK_1 \vee \textbf{K} RK_2 \rightarrow \textbf{K} RG/\textbf{Nil}^W_R, $$
where the spectrum $\textbf{K} RG/\textbf{Nil}^W_R$ is a natural split summand of $\textbf{K} RG$, i.e. we have
$$ \textbf{K} RG \simeq \textbf{K} RG/\textbf{Nil}^W_R \vee \textbf{Nil}^W_R. $$
Moreover, the spectrum $\textbf{Nil}^W_R$ is contractible if $RH$ is a regular coherent ring. We will get the same result using a geometric understanding of the classifying space $E(G;\text{Fin})$.

We can equip $\mathbb{R}$ with an action of $D_\infty = \left\langle a, b | a^2 = b^2 = 1 \right\rangle$ by sending $a$ to the reflection around $0$ and $b$ to the reflection around $1/2$. The $D_\infty$-space $\mathbb{R}$ is easily seen to be a model for $E(D_\infty;\text{Fin})$. If $G$ is any group of type VC2, we can equip $\mathbb{R}$ with a $G$-action via the projection $G \rightarrow D_\infty$. Since the kernel $H$ of $G \rightarrow D_\infty$ is finite, we conclude the following:

\begin{lemma} \label{realsareefin}
Let $G$ be virtually cyclic of type 2. The $G$-space $\mathbb{R}$ with action lifted from the projection $G \rightarrow D_\infty$ is a model for $E(G;\text{Fin})$.
\end{lemma}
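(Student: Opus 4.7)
The plan is to verify directly that $\mathbb{R}$, with the $G$-action obtained by pulling back the standard $D_\infty$-action along $\pi\colon G \twoheadrightarrow D_\infty$, admits a $G$-CW-structure and satisfies the fixed-point characterization of $E(G;\textup{Fin})$ from the paper, i.e.\ $\mathbb{R}^K$ is contractible when $K \leq G$ is finite and empty otherwise.

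First I would exhibit the $G$-CW-structure. The $D_\infty$-space $\mathbb{R}$ carries the obvious cell decomposition with $0$-cells at $\tfrac{1}{2}\mathbb{Z}$: two orbits $D_\infty/\langle a \rangle$ (the integers) and $D_\infty/\langle b \rangle$ (the half-integers), together with one orbit of $1$-cells (the half-intervals $[\tfrac{n}{2},\tfrac{n+1}{2}]$), whose interior is acted on freely. Writing $K_1 \defeq \pi^{-1}(\langle a\rangle)$ and $K_2 \defeq \pi^{-1}(\langle b\rangle)$ (both finite, each containing $H$ with index $2$), pulling this back along $\pi$ produces a $G$-CW-structure described by the pushout
$$\begin{tikzcd}
G/H \times S^0 \arrow[r] \arrow[d] & G/K_1 \sqcup G/K_2 \arrow[d] \\
G/H \times D^1 \arrow[r] & \mathbb{R},
\end{tikzcd}$$
in perfect analogy with the VC1 case treated above. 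In particular all isotropy groups of cells lie in $\textup{Fin}$.

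Second I would compute fixed points. Since $G$ acts through $\pi$, we have $\mathbb{R}^K = \mathbb{R}^{\pi(K)}$ for every subgroup $K \leq G$. If $K$ is finite, then $\pi(K)$ is a finite subgroup of $D_\infty$, hence either trivial or generated by a single reflection; its fixed-point set on $\mathbb{R}$ is therefore either all of $\mathbb{R}$ or a single point, and in both cases contractible. If $K$ is infinite, then since $\ker \pi = H$ is finite the image $\pi(K)$ must also be infinite, and every infinite subgroup of $D_\infty$ contains a nontrivial translation (indeed the squares of all elements of $D_\infty$ are translations, and infinitely many squared elements cannot all coincide); such a translation acts freely on $\mathbb{R}$, so $\mathbb{R}^K = \emptyset$.

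There is essentially no obstacle here: the only two facts needed are the classification of subgroups of $D_\infty$ (finite subgroups are generated by reflections, infinite ones contain translations) and the standard observation that fixed points for a pulled-back action are computed via the image of the map of groups. Combining these two paragraphs verifies the defining property of $E(G;\textup{Fin})$ and completes the proof.
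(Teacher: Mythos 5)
The paper offers no explicit proof of this lemma; the text preceding it simply observes that $\mathbb{R}$ is a model for $E(D_\infty;\text{Fin})$, pulls the action back along $\pi\colon G \to D_\infty$, and invokes finiteness of $\ker\pi = H$ to conclude. Your proof fills in exactly the steps this leaves implicit: the $G$-CW-structure (which matches the pushout appearing later in Lemma~\ref{vc2pushout}, whose attaching sphere should read $S^0$, not $S^1$), and the fixed-point computation $\mathbb{R}^K = \mathbb{R}^{\pi(K)}$ split according to whether $K$ is finite or infinite. This is the intended approach, carried out in full.

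One small slip: the parenthetical justification that an infinite subgroup of $D_\infty$ contains a nontrivial translation is not quite right as stated. All the infinitely many reflections in $D_\infty$ square to the identity, so ``infinitely many squared elements cannot all coincide'' is false as a claim about mere subsets. What actually saves the conclusion is that $\pi(K)$ is a \emph{subgroup}: the translation subgroup $C_\infty$ has index $2$ in $D_\infty$, so $\pi(K)\cap C_\infty$ has index at most $2$ in $\pi(K)$ and is therefore infinite, hence contains a nontrivial translation. (Equivalently, the product of two distinct reflections is a nontrivial translation, so any subgroup with more than two elements contains one.) Everything else, including the freeness of the translation action and the consequent $\mathbb{R}^K = \emptyset$, is correct.
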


Consequently, we get a nice pushout description for the $G$-space $E(G;\text{Fin})$.

\begin{lemma} \label{vc2pushout}
Suppose $G = K_1 *_H K_2$ is of type $\text{VC}2$. Then there is a pushout square
$$\xymatrix{
G/H \times S^0 \ar[r] \ar[d]	& G/K_1 \sqcup G/K_2 \ar[d] \\
G/H \times D^1 \ar[r]		& E\text{Fin}
}$$
of $G$-spaces, giving $E\text{Fin}$ a 1-dimensional $G$-CW-structure.
\end{lemma}

\begin{proof} By Lemma \ref{realsareefin} the space $\mathbb{R}$ with the action lifted from the projection $\pi  \colon G \rightarrow D_\infty$ is a model for $E\text{Fin}$. This means we may as well assume that $G = D_\infty = \left\langle a, b | a^2 = b^2 = 1\right\rangle$, i.e. $H = \set{1}$, $K_1 = \left\langle a \right\rangle$, $K_2 = \left\langle b \right\rangle $. Now it is an elementary exercise to see that $\mathbb{R}$ indeed fits into a pushout square of the shape
$$\xymatrix{
D_\infty \times S^1 \ar[r] \ar[d]	& D_\infty/ \left\langle a \right\rangle \sqcup D_\infty/ \left\langle b \right\rangle \ar[d] \\
D_\infty \times D^1 \ar[r]		& \mathbb{R}.
}$$
\end{proof}

Write $\iota_i$ for the inclusions $H \hookrightarrow K_i$. The functors $K_0 \mathbb{Q}-$ and $K_{-1} \mathbb{Z}-$ satisfy finite assembly (see Remark \ref{rationalktheoryfiniteassembly} as well as Corollary \ref{negativektheoryfiniteassembly}) therefore as a consequence of the pushout square from Lemma \ref{vc2pushout} we get the exact sequences
\begin{align*}
 \widetilde{K_0 }\mathbb{Q} H \xrightarrow{(\iota_1,-\iota_2)} \widetilde{K_0 }\mathbb{Q} K_1 \oplus \widetilde{K_0 }\mathbb{Q} K_2 \rightarrow \widetilde{K_0 \mathbb{Q} G } \rightarrow 0  \\
  K_{-1} \mathbb{Z} H \xrightarrow{(\iota_1,-\iota_2)} K_{-1} \mathbb{Z} K_1 \oplus K_{-1} \mathbb{Z} K_2 \rightarrow K_{-1} \mathbb{Z} G \rightarrow 0
\end{align*}
and we have a long exact sequence from Lemma \ref{technicallemma}
$$ 0 \rightarrow \text{ker}^{\widetilde{K_0 }\mathbb Q}  \rightarrow \text{ker}^{\text{SC}} \rightarrow \text{ker}^{K_{-1} \mathbb{Z}} \rightarrow \text{im}( \widetilde{K_0 }\mathbb{Z}G \rightarrow \widetilde{K_0 }\mathbb{Q}G ) \rightarrow 0 $$
with 
\begin{align*}
\text{ker}^{\widetilde{K_0 }\mathbb Q} \cong \text{ker} \left( \widetilde{K_0 }\mathbb{Q} H \xrightarrow{(\iota_1,-\iota_2)} \widetilde{K_0 }\mathbb{Q} K_1 \oplus \widetilde{K_0 }\mathbb{Q} K_2 \right) \\
\text{ker}^{\text{SC}} \cong \text{ker} \left( \text{SC}(H) \xrightarrow{(\iota_1,-\iota_2)} \text{SC}(K_1) \oplus \text{SC}(K_2) \right) \\
\text{ker}^{K_{-1} \mathbb{Z}} \cong \text{ker} \left( K_{-1} \mathbb{Z} H \xrightarrow{(\iota_1,-\iota_2)} K_{-1} \mathbb{Z} K_1 \oplus K_{-1} \mathbb{Z} K_2 \right).
\end{align*}
%
%

\begin{remark}
We will construct an example of a group $G$ of type $\text{VC}2$, for which the map $ \widetilde{K_0 }\mathbb{Z} G \rightarrow \widetilde{K_0 }\mathbb{Q} G $ is non-trivial, in section \ref{counterexample}.
\end{remark}

\section{A counterexample to the integral $\widetilde{K_0 }\mathbb{Z}G$-to-$\widetilde{K_0 }\mathbb{Q}G$ conjecture} \label{counterexample}

The following section is concerned with an example of a group $G$ with the property that $ \widetilde{ K_0 }\mathbb{Z} G \rightarrow \widetilde{ K_0 }\mathbb{Q} G$ is non-trivial.

For the construction take $Q_{16}$ contained in the semidihedral group $QD_{32}$. We will show that the group $G \defeq QD_{32} *_{Q_{16}} QD_{32}$ has the property that $\widetilde{K_0 }\mathbb{Z} G$ maps onto a summand $\mathbb{Z}/2$ sitting inside $\widetilde{K_0 }\mathbb{Q} G$. The group $G$ is not special in this regard. The reason for choosing it is that $G$ contains the group $Q_{16}$ as a maximal finite normal subgroup. The group $Q_{16}$ is the smallest group with torsion in negative $K$-theory, hence we do not expect counterexamples of groups which are particularly simpler than $G$. All computations have been done using the computer algebra system $GAP$, \cite{GAP4}.

\subsection{The group $Q_{16}$} \label{generalizedquaternion16}
We let a presentation of $Q_{16}$ be given as
$$ Q_{16} = \left\langle r,s | r^8 = 1,  r^4 = s^2, srs^{-1} = r^7 \right\rangle. $$
It has the following conjugacy classes:
\begin{center}
\begin{tabular}{| c | c c c c |}
\hline
 Class & $\set{1}$ & $\set{ s^2 }$ & $\set{r^2,r^6}$ & $\set{s, r^2 s, r^4 s, r^6 s}$ \\ 
 \hline
 Order & 1 & 2 & 4 & 4  \\  
 Size  & 1 & 1 & 2 & 4  \\
 \hline
\hline
 Class  & $\set{rs, r^3s, r^5s, r^7s}$ & $\set{r,r^7}$ & $\set{r^3,r^5}$ & \\ 
 \hline
 Order & 4 & 8 & 8 & \\  
 Size  & 4 & 2 & 2 & \\
 \hline
\end{tabular}
\end{center}

Since $Q_{16}$ is a $2$-group, by Remark \ref{pgroupshaveonlytorsion} the group $K_{-1} \mathbb{Z} Q_{16}$ must be torsion. Using the ``wedderga'' package in $GAP$, \cite{Wedderga}, we can check the Schur indices appearing in the Wedderburn decomposition of $\mathbb{Q} Q_{16}$. In the following, input commands are prefaced by {\tt Input>}, whereas output produced by $GAP$ is prefaced by {\tt Output>}. \\

\vspace{-2ex}  
\noindent  {\tt  Input> G := QuaternionGroup(16); \\
Input> WedderburnDecompositionWithDivAlgParts( GroupRing( Rationals, G ) );  \vspace{1ex} \newline
Output>
[ [ 1, Rationals ], [ 1, Rationals ], [ 1, Rationals ], [ 1, Rationals ], [ 2, Rationals ],
  [ 1, rec( Center := NF(8,[ 1, 7 ]), DivAlg := true, LocalIndices := [ [ infinity, 2 ] ], SchurIndex := 2 ) ] ]
 }\vspace{1ex}

A few comments on how to read this output are needed. As described in Section \ref{lowerktheory}, the group algebra $\mathbb{Q}G$ splits as
$$ \mathbb{Q} G \cong \prod_{I \in \text{Irr}_\mathbb{Q}(G) } M_{n_I \times n_I} (D_I), $$
with the $D_I$ being finite dimensional division algebras over $\mathbb{Q}$. The function WedderburnDecompositionWithDivAlgParts returns a list containing information about each part $M_{n_I \times n_I} (D_I)$ appearing in the Wedderburn decomposition. First, we have 6 entries corresponding to the 6 irreducible representations of $Q_{16}$. The first number in each of the entries refers to the number $n_I$. Next to it is information about $D_I$. In our case the first $5$ entries happen to have $D_I = \mathbb{Q}$. For the last entry, its division algebra $D$ is non-commutative, which is signalled by {\tt DivAlg := true}. The center $A$ of $D$ is a finite field extension of $\mathbb{Q}$ and described as {\tt NF(8,[ 1, 7 ])}. This notation means that $A$ is a sub-field of the cyclotomic field extension $\mathbb{Q}(\zeta_8)$ being fixed by the subgroup $\set{\underline{1}, \underline{7}}$ of the Galois group $(\mathbb{Z}/8)^\times = \set{\underline{1}, \underline{3}, \underline{5}, \underline{7}}$. 
It is not difficult to see that $A = \mathbb{Q}(\sqrt{2})$, using the code: \\

\vspace{-2ex} \begin{raggedright} 
\noindent 
{\tt 
Input> A := NF( 8, [ 1 , 7 ] ); \\ \vspace{1ex}
Output> NF(8,[ 1, 7 ]) \\ \vspace{1ex}
Input> Dimension(A); \\ \vspace{1ex}
Output> 2 \\ \vspace{1ex}
Input> Sqrt(2) in A; \\ \vspace{1ex}
Output> true \\ \vspace{1ex}
} 
\end{raggedright}
\noindent This means we have the decomposition
$$ \mathbb{Q} Q_{16} \cong \mathbb{Q} \times \mathbb{Q} \times \mathbb{Q} \times \mathbb{Q} \times M_{2 \times 2}(\mathbb{Q}) \times D. $$
The entry {\tt SchurIndex} gives the global Schur index of the representation $I$ and is displayed only when it is bigger than 1. {\tt LocalIndices} gives a list of all primes at which the local Schur index of $I$ is not equal to 1, together with the real Schur index for the value {\tt infinity}.

In our case we can see that $\mathbb{Q} Q_{16}$ has a single irreducible rational representation $\alpha$ with endomorphism algebra $D$, together with a (unique) irreducible $\mathbb Q_2$-representation $\beta$ of $Q_{16}$ such that $\alpha \otimes_\mathbb{Q} \mathbb{Q}_2 = 2 \beta$. We thus have that
$$K_{-1} \mathbb{Z}Q_{16} \cong \mathbb{Z}/2$$
generated by the image of the singular character of $\beta$ under the map
$$\mathrm{SC}(Q_{16}) \rightarrow K_{-1} \mathbb{Z}Q_{16}.$$
The representation $\alpha$ is concretely given by the action of $Q_{16}$ on the quaternion algebra $\mathbb{H}_{\mathbb{Q}(\sqrt{2})} \defeq \mathbb{Q}(\sqrt{2}) \left\langle i , j | i^4 = j^4 = -1, ij = -ji \right\rangle $ over the field $\mathbb{Q}(\sqrt{2})$, realized by 
$$\begin{cases}
r \mapsto (\frac{1}{\sqrt{2}} + \frac{i}{\sqrt{2}}) \\ 
s \mapsto j.
\end{cases}$$
acting via left multiplication on $\mathbb{H}_{\mathbb{Q}(\sqrt{2})}$. In particular, it is $8$-dimensional. Note that $\alpha$ can also be characterized as the unique faithful irreducible $\mathbb{Q}$-representation of $Q_{16}$.

\subsection{The group $QD_{32}$}
A presentation of $QD_{32}$ is given as
$$ QD_{32} = \left\langle a,b | a^{16} = 1, b^2=1 , bab = a^7 \right\rangle. $$

It is easy to see that every element of $QD_{32}$ can be represented in the form $a^n b^i$ for $n = 0, \dots , 15$ and $ i = 0, 1$, from which it follows that $QD_{32}$ has in fact 32 elements. The inclusion $Q_{16} \rightarrow QD_{32}$ can be realized by sending $r \mapsto a^2, s \mapsto a b$ as seen by the calculations
$$ (a^2)^4 = a^8 = a (a^7 b) b = a (b a) b = (ab)^2 $$
as well as
$$ (ab) a^2 = a (a^{2*7}) b = (a^2)^7 (ab). $$
The image of this homomorphism consists of all $a^n b^i$ for which $n+i$ is even, of which there are exactly 16 elements from which it follows that it actually is an inclusion.

The conjugacy classes are given as follows
\begin{center}
\begin{tabular}{| c | c c c c c c |}
\hline
 Class & $\set{1}$ & $\set{ a^8 }$ & $\set{a^{2n} b }$ & $\set{a^4,a^{12}}$& $\set{a^{2n+1}b}$ & $\set{a^2, a^{14}}$  \\ 
 \hline
 Order & 1 & 2 & 2 & 4 & 4 & 8   \\  
 Size  & 1 & 1 & 8 & 2 & 8 & 2   \\
 \hline \hline
 Class & $\set{a^6,a^{10}}$  & $\set{a,a^7}$ & $\set{a^3,a^5}$ & $\set{a^9,a^{15}}$  & $\set{a^{11},a^{13}}$ & \\
 \hline
  Order & 8 & 16 & 16 & 16 & 16 & \\  
 Size   & 2 &  2  &  2 &  2 &  2 & \\
 \hline
\end{tabular}
\end{center}

Similarly to before, $QD_{32}$ is a $2$-group, so $K_{-1} \mathbb{Z} QD_{32}$ is torsion. Doing the same computation of the Schur indices appearing in the Wedderburn decomposition of $QD_{32}$ we get: \\

\vspace{-2ex} \noindent \begin{raggedright} 
{\tt \hspace{-1.5ex} Input> G := SmallGroup(32,19); \vspace{1ex} \\  
Output> <pc group of size 32 with 5 generators> \vspace{1ex} \\  
Input> WedderburnDecompositionWithDivAlgParts( GroupRing( Rationals, G));  \vspace{1ex} \newline 
Output> {[ [ 1, Rationals ], [ 1, Rationals ], [ 1, Rationals ], [ 1, Rationals ], [ 2, Rationals ], [ 2, NF(8,[ 1, 7 ]) ],
  [ 2, NF(16,[ 1, 7 ]) ] ]} \\  
} \vspace{1ex}
\end{raggedright}
\noindent The values $(32,19)$ refer to the ID of $QD_{32}$ in the SmallGroups library of $GAP$. Similarly to before, this means we have the Wedderburn decomposition
$$ \mathbb{Q}QD_{32} \cong \mathbb{Q} \times \mathbb{Q} \times \mathbb{Q} \times \mathbb{Q} \times M_{2 \times 2}(\mathbb{Q}) \times M_{2 \times 2}(A_1) \times M_{2 \times 2}(A_2), $$
with $A_1$ being the sub-field of $\mathbb{Q}(\zeta_8)$ fixed by $\set{\underline{1}, \underline{7}} \subset ( \mathbb{Z}/8 )^\times$ and $A_2$ being the sub-field of $\mathbb{Q}(\zeta_{16})$ fixed by $\set{\underline{1}, \underline{7}} \subset ( \mathbb{Z}/16 )^\times$.
From this we can see that no irreducible rational representations contribute to torsion in $K_{-1} \mathbb{Z} QD_{32}$. Hence
$$K_{-1} \mathbb{Z} QD_{32} = 0.$$
  
\subsection{The group $QD_{32} *_{Q_{16}} QD_{32}$}
The group we want to consider is the group 
$$G \defeq QD_{32} *_{Q_{16}} QD_{32}.$$ 
A concrete presentation is given by
$$
G = \left\langle a, b, a', b' \middle| \begin{matrix}a^{16} = 1, b^2=1 , aba^{-1} = a^7, '^{16} = 1, b'^2=1 , \\
a a'b'a'^{-1} = a'^7, a^2 = a'^2, ab = a' b'\end{matrix} \right\rangle.
$$

The group $G$ is virtually cyclic of type $2$, which means that we can apply the formulas from Section \ref{virtuallycyclictype2}. We have the long exact sequence
$$ 0 \rightarrow \text{ker}^{\widetilde{K_0 }\mathbb Q}  \rightarrow \text{ker}^{\text{SC}} \rightarrow \text{ker}^{K_{-1} \mathbb{Z}} \rightarrow \text{im}( \widetilde{K_0 }\mathbb{Z}G \rightarrow \widetilde{K_0 }\mathbb{Q}G ) \rightarrow 0. $$

The previous calculations show that $K_{-1} \mathbb{Z} Q_{16} = \mathbb{Z}/2$ and  $K_{-1} \mathbb{Z} QD_{32} = 0$, which gives $\text{ker}^{K_{-1} \mathbb{Z}} = \mathbb{Z}/2$. We claim that the map $\text{ker}^{K_{-1} \mathbb{Z}} \rightarrow \text{im}( \widetilde{K_0 }\mathbb{Z}G \rightarrow \widetilde{K_0 }\mathbb{Q}G )$ is injective, which is equivalent to the map $\text{ker}^{\text{SC}} \rightarrow \text{ker}^{K_{-1} \mathbb{Z}}$ being trivial. Since $Q_{16}$ and $QD_{32}$ are $2$-groups, we have isomorphisms $\text{SC}(Q_{16}) \cong \widetilde{K_0 }\mathbb{Q}_2Q_{16}$ and similarly $\text{SC}(QD_{32}) \cong \widetilde{K_0 }\mathbb{Q}_2QD_{32}$ by Remark \ref{scforpgroups}. This means that
\begin{align*}
\text{ker}^{\text{SC}} \cong  \text{ker}( \widetilde{K_0 }\mathbb{Q}_2Q_{16} \xrightarrow{(\iota_1,-\iota_2)} \widetilde{K_0 }\mathbb{Q}_2QD_{32} \oplus \widetilde{K_0 }\mathbb{Q}_2QD_{32} ) \\
 = \text{ker}( \widetilde{K_0 }\mathbb{Q}_2Q_{16} \rightarrow \widetilde{K_0 }\mathbb{Q}_2QD_{32} ).
\end{align*}
By Corollary \ref{splittingofplocalization} the map $\widetilde{K_0 }\mathbb{Q} Q_{16} \rightarrow \widetilde{ K_0 }\mathbb{Q}_2 Q_{16}$ splits as
$$\bigoplus_{ I \in \text{Irr}_\mathbb{Q}(G)} \mathbb{Z} \rightarrow \bigoplus_{ I \in \text{Irr}_\mathbb{Q}(G)} \mathbb{Z} \set{ K \in \text{Irr}_{\mathbb{Q}_2}(Q_{16}) | K ~\text{appears as a summand in}~ I\otimes \mathbb{Q}_2 } $$
As discussed earlier, the group $Q_{16}$ has a single irreducible $8$-dimensional $\mathbb{Q}$-representation $\alpha$ together with a (unique) irreducible $4$-dimensional $\mathbb Q_2$-representation $\beta$ of $Q_{16}$ such that $\alpha \otimes_\mathbb{Q} \mathbb{Q}_2 = 2 \beta$. The negative $K$-theory group $K_{-1} \mathbb{Z} Q_{16} = \mathbb{Z}/2$ is generated by the image of $\beta$. Neither $\alpha$ nor $\beta$ can lie in the kernels of $\iota_1$ and $\iota_2$, respectively, since their inductions to $QD_{32}$ are neither the trivial nor regular representations (by looking at their dimensions), which shows the claim.

In summary, we have just shown that for the group $G$,
$$\text{im} ( \widetilde{K_0 }\mathbb{Z} G  \rightarrow \widetilde{K_0 }\mathbb{Q} G ) \cong \mathbb{Z}/2.$$

\subsection{Other examples}

The group $QD_{32}$ is not special beyond the property that the map $K_{-1} \mathbb{Z} Q_{16} \rightarrow K_{-1} \mathbb{Z} QD_{32}$ is not injective. In fact, the group $Q_{16}$ sits inside $5$ different groups of order $32$. This can be checked with the $GAP$ code: \\

\vspace{-2ex} \noindent \begin{raggedright} 
{\tt \hspace{-1.5ex}
Input> for G in AllSmallGroups(32) do \\
> if ForAny( NormalSubgroups(G) , H -> IdSmallGroup(H) = [ 16, 9 ] ) then Print(IdSmallGroup(G)); fi; \\
> od;\\ 
Output> {[ 32, 19 ][ 32, 20 ][ 32, 41 ][ 32, 42 ][ 32, 44 ]} \\
} \end{raggedright}
Here the value $( 16, 9 )$ refers to the ID of $Q_{16}$ in the SmallGroups library in $GAP$.

We will analyse them case by case:
\begin{enumerate}
\item ID = [ 32, 19 ], also known as $QD_{32}$: As discussed $K_{-1} \mathbb{Z} QD_{32} = 0$.
\item ID = [ 32, 20 ], also known as $Q_{32}$: Here we can show that $K_{-1} \mathbb{Z} Q_{16} \rightarrow K_{-1} \mathbb{Z} Q_{32}$ induces an isomorphism.
\item ID = [ 32, 41 ], also known as $Q_{16} \times C_2$: Here we can show that $K_{-1} \mathbb{Z} (Q_{16} \times C_2) = (\mathbb{Z}/2)^2$ and the map  $K_{-1} \mathbb{Z} Q_{16} \rightarrow K_{-1} (Q_{16} \times C_2)$ corresponds to the diagonal $\mathbb{Z}/2 \rightarrow (\mathbb{Z}/2)^2$. In particular it is injective.
\item ID = [ 32, 42 ], also known as $C_4 \circ D_8$: This group has  $K_{-1} \mathbb{Z} C_4 \circ D_8 = 0$.
\item ID = [ 32, 44 ], also known as $C_8.C_2^2$: Here we can show that the map $K_{-1} \mathbb{Z} Q_{16} \rightarrow K_{-1} \mathbb{Z} (C_8.C_2^2)$ induces an isomorphism.
\end{enumerate}

In summary, the only virtually cyclic groups of type 2 that contain $Q_{16}$ as kernel for which $ \text{im} ( \widetilde{K_0 }\mathbb{Z} G  \rightarrow \widetilde{K_0 }\mathbb{Q} G ) $ is non-trivial are the groups
$$ QD_{32} \ast_{Q_{16}} QD_{32},     QD_{32}\ast_{Q_{16}} (C_4 \circ D_8), ~\text{and}~     (C_4 \circ D_8) \ast_{Q_{16}} (C_4 \circ D_8). $$
In each of those cases we have
$$\text{im} ( \widetilde{K_0 }\mathbb{Z} G  \rightarrow \widetilde{K_0 }\mathbb{Q} G ) = \mathbb{Z}/2.$$

\section{Comparison to related functors}

%
%
%

We can ask if there is a more general statement to the one considered in this paper on $K_0$ for higher $K$-groups. Here the most natural way to generalize to $\pi_1$ would be to understand the map
$$ \text{Wh}_1(\mathbb{Z};G) \rightarrow \text{Wh}_1(\mathbb{Q};G). $$

For finite $G$, we have that 
$$\text{Wh}_1(\mathbb{Z};G) \cong \mathbb{Z}^{r_\mathbb{R}-r_\mathbb{Q}} \oplus SK_1(\mathbb{Z}G),$$
where $r_\mathbb{R}$ and $r_\mathbb{Q}$ are the number of real and rational representations, respectively, and $SK_1(\mathbb{Z}G)$ is a finite group, given as the kernel of the map $K_1(\mathbb{Z}G) \rightarrow K_1(\mathbb{Q}G)$. (See \cite{oliver_1988}, page 6)

Using the groups of type $G = H \times (C_\infty)^2$, the Bass-Heller-Swan decomposition already tells us that any defects of the maps in $K_{-1}$, $\widetilde{K_0}$ and $\text{Wh}_1$ for virtually cyclic groups $H$ will enter the picture. Thus we can easily find counterexamples to the possibility that for example the map $\text{Wh}_1(\mathbb{Z};G) \rightarrow \text{Wh}_1(\mathbb{Q};G)$ is an isomorphism rationally (injectivity fails for the corresponding statement in $K_{-1}$ for $H$ being any group $H$ with an element of non-prime power order and surjectivity fails for $\widetilde{K_0}$ for any non-trivial group $H$) or that it kills all torsion (not true in $\widetilde{K_0}$ by the results of this paper).

\appendix

\section{Stable $\infty$-categories and $\lowercase{t}$-structures} \label{tstructures}

In this appendix we develop some of the tools for dealing with $t$-structures on stable $\infty$-categories. The standard reference will be Section 1.2.1 in Lurie, \cite{lurieha}. We note that if $\mathcal{C}$ is a stable $\infty$-category, its homotopy category is naturally a triangulated category. The notion of a $t$-structure has been first developed for triangulated categories in \cite{bbd} and we will give a definition here. It is worth pointing out that \cite{bbd} uses cohomological indexing, whereas we follow the homological indexing used in \cite{lurieha}.

\begin{definition} Let $\mathcal D$ be a triangulated category. A $t$-structure on $\mathcal D$ is a pair of full subcategories $\mathcal D_{\geq 0}$ and $\mathcal D_{\leq 0}$, both closed under isomorphisms, such that the following three conditions hold. Here $\mathcal D_{\geq n} \defeq \Sigma^n \mathcal D_{\geq 0}$ and $\mathcal D_{\leq n} \defeq \Sigma^n \mathcal D_{\leq 0}$ are defined as the essential images under the functors $\Sigma^n$ for all $n \in \mathbb{Z}$.
\begin{itemize}
\item If $X \in \mathcal D_{\geq 0}$, $Y \in \mathcal D_{\leq -1}$ then $\text{Hom}_{\mathcal D}(X,Y) = 0$.
\item $\mathcal D_{\geq 1} \subset \mathcal D_{\geq 0}$, $\mathcal D_{\leq -1} \subset \mathcal D_{\leq 0}$
\item For all objects $X$ in $\mathcal D$ we have a distinguished triangle
$$ X' \rightarrow X \rightarrow X'' \rightarrow \Sigma X' $$
with $X' \in \mathcal D_{\geq 0}$ and $X'' \in \mathcal D_{\leq -1}$.
\end{itemize}
\end{definition}

A $t$-structure on a stable $\infty$-category is defined to be a $t$-structure on its homotopy category $h \mathcal{C}$. We define two types of subcategories of $\mathcal{C}$, namely $\mathcal{C}_{\geq n}$ and $\mathcal{C}_{\leq n}$, as the full subcategories of $\mathcal{C}$ corresponding to the subcategories $h\mathcal{C}_{\geq n}$ and $h\mathcal{C}_{\leq n}$ of $h \mathcal{C}$, respectively. The inclusions of the subcategories $\mathcal{C}_{\geq n}$ in $\mathcal{C}$ admit right adjoints denoted by $\tau_{\geq n}$, which act as the identity when restricted to $\mathcal{C}_{\geq n}$. Consequently, $\mathcal{C}_{\geq n}$ is closed under colimits in $\mathcal{C}$. Dually, the subcategories $\mathcal{C}_{\leq n}$ in $\mathcal{C}$ admit left adjoints $\tau_{\geq n}$, which act as the identity on $\mathcal{C}_{\leq n}$, and $\mathcal{C}_{\leq n}$ is closed under limits in $\mathcal{C}$ (\cite{lurieha} Proposition 1.2.1.5.). The compositions $\tau_{\geq n} \circ \tau_{\leq m}$ and $\tau_{\leq m} \circ \tau_{\geq n}$ are naturally equivalent (\cite{lurieha} Proposition 1.2.1.10.) and will be denoted as $A \mapsto A[m,n]$, or $A \mapsto A[n]$ in the case $n=m$.\footnote{Our choice of notation clashes here with the one used in \cite{bbd}, where $X[n]$ denotes the $n$-th suspension of $X$.} The intersection $\mathcal{C}^\heartsuit \defeq \mathcal{C}_{\geq 0} \cap \mathcal{C}_{\leq 0}$ is called the heart of $\mathcal{C}$ and is (equivalent to the nerve of) an abelian $1$-category. There are functors $\pi_n \defeq ( A \mapsto (\Sigma^{-n} A) [0])$, from $\mathcal{C}$ to $\mathcal{C}^\heartsuit$ which will be called homotopy group functors. (\cite{lurieha}  
Definition 1.2.1.11. and Remark 1.2.1.12.)

\subsection{Homological algebra in the setting of $t$-structures}

The following section is concerned with the relationship between computations involving fiber sequences in $\mathcal{C}$ and homological algebra in the abelian category $\mathcal{C}^\heartsuit$. The following theorem is the central part of this section: Fiber sequences in $\mathcal C$ give rise to long exact sequences in $\mathcal{C}^\heartsuit$. 

\begin{theorem}[See \cite{bbd}, Theorem 1.3.6] \label{longexactsequence}
Let $\mathcal C$ be a stable $\infty$-category with a $t$-structure. Let
$$X \xrightarrow{f} Y \xrightarrow{g} Z$$
be a fiber sequence. Then there is an induced long exact sequence
$$ \cdots \rightarrow \pi_{n+1} Z \rightarrow \pi_n X \xrightarrow{\pi_n f} \pi_n Y \xrightarrow{\pi_n g} \pi_n Z  \rightarrow \pi_{n-1} X \rightarrow \cdots $$
where the maps $\pi_n Z  \rightarrow \pi_{n-1} X$ come from $\pi_n$ applied to the boundary map $Z \rightarrow \Sigma X$ which realizes the cofiber of $X \rightarrow Y$.
\end{theorem}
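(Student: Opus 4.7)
The plan is to reduce everything to proving exactness of a single three-term fragment, and then exploit the rotational symmetry of fiber sequences in a stable $\infty$-category. In any stable $\infty$-category fiber and cofiber sequences coincide, so $X \xrightarrow{f} Y \xrightarrow{g} Z$ extends to a two-sided Puppe sequence
$$ \cdots \to \Omega Y \to \Omega Z \to X \xrightarrow{f} Y \xrightarrow{g} Z \to \Sigma X \to \Sigma Y \to \cdots $$
in which every three consecutive objects form a fiber sequence. Once one establishes three-term exactness at the middle of an arbitrary fiber sequence, the identification $\pi_n(W) \cong \pi_0(\Sigma^{-n}W)$ and the Puppe sequence yield exactness at every position of the claimed long exact sequence, together with the identification of the connecting map with $\pi_n$ of the boundary $Z \to \Sigma X$. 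So the entire theorem is a formal consequence of the following

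\emph{Key lemma.} For every fiber sequence $A \to B \to C$ in $\mathcal{C}$, the sequence $\pi_0 A \to \pi_0 B \to \pi_0 C$ is exact at $\pi_0 B$ in the abelian category $\mathcal{C}^\heartsuit$.

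For the key lemma, one inclusion is essentially free: the composite $A \to B \to C$ is null (it factors through $0$ by the definition of fiber sequence), hence $\pi_0$ of the composite vanishes and $\mathrm{im}(\pi_0 f) \subseteq \ker(\pi_0 g)$. The content lies in the reverse inclusion, and the strategy is to squeeze the fiber sequence into the heart using the two truncation adjunctions. Since $\pi_0 = \pi_0 \circ \tau_{\geq 0}$ and $\tau_{\geq 0}$ is a right adjoint to the inclusion $\mathcal{C}_{\geq 0} \hookrightarrow \mathcal{C}$ (hence preserves fibers), one may assume from the outset that $A,B,C \in \mathcal{C}_{\geq 0}$. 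Next, the fiber sequence is also a cofiber sequence by stability, so applying the left adjoint $\tau_{\leq 0}$ (which preserves colimits) produces a cofiber sequence $\tau_{\leq 0} A \to \tau_{\leq 0} B \to \tau_{\leq 0} C$ whose three terms lie in $\mathcal{C}_{\leq 0} \cap \mathcal{C}_{\geq 0} = \mathcal{C}^\heartsuit$ (using the standard fact that $\tau_{\leq 0}$ preserves $\mathcal{C}_{\geq 0}$, proved from the fiber sequence $\tau_{\geq 1}A \to A \to \tau_{\leq 0}A$ and closure of $\mathcal{C}_{\geq 0}$ under cofibers). These three terms are exactly $\pi_0 A, \pi_0 B, \pi_0 C$.

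The main obstacle is the final identification: I need to know that a cofiber sequence in $\mathcal{C}$ whose three terms already lie in the heart corresponds to a short exact sequence in $\mathcal{C}^\heartsuit$. This is the non-trivial input and I would invoke it from the literature rather than redevelop it; concretely it is part of Lurie, \cite{lurieha}, Proposition 1.2.1.12 (or the analogous statement in \cite{bbd}), which asserts both that $\mathcal{C}^\heartsuit$ is abelian and that its monics, epics, kernels, and cokernels are computed via truncations of the ambient (co)fiber constructions. Granting this, the cofiber sequence $\pi_0 A \to \pi_0 B \to \pi_0 C$ in $\mathcal{C}^\heartsuit$ identifies $\pi_0 C$ with $\mathrm{coker}(\pi_0 f)$, and in an abelian category this forces $\ker(\pi_0 g) = \mathrm{im}(\pi_0 f)$, which is precisely the exactness needed to conclude.
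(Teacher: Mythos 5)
The paper does not prove this theorem; it is cited directly from \cite{bbd}, Theorem 1.3.6, so there is no in-paper argument to compare against and the proposal must stand on its own.

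Your overall scaffolding (rotate the fiber sequence into a two-sided Puppe sequence, reduce to three-term exactness, and squeeze into the heart with the two truncation adjoints) is a sensible outline and matches the general approach in the references. However, the connectivity reduction has a genuine gap. You write that $\tau_{\geq 0}$ "preserves fibers" because it is a right adjoint, and then proceed as if $\tau_{\geq 0} A \to \tau_{\geq 0} B \to \tau_{\geq 0} C$ were still a fiber sequence \emph{in} $\mathcal{C}$, which is what you need two sentences later when you invoke stability to treat it as a cofiber sequence and hit it with $\tau_{\leq 0}$. But $\tau_{\geq 0}\colon \mathcal{C} \to \mathcal{C}_{\geq 0}$ preserves limits only as a functor into $\mathcal{C}_{\geq 0}$, and the fiber in $\mathcal{C}_{\geq 0}$ of $\tau_{\geq 0}B \to \tau_{\geq 0}C$ is $\tau_{\geq 0}$ of the fiber in $\mathcal{C}$ — so the truncated sequence is a fiber sequence in the prestable $\infty$-category $\mathcal{C}_{\geq 0}$, not in $\mathcal{C}$. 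Since $\mathcal{C}_{\geq 0}$ is not stable, that does not make it a cofiber sequence, and the subsequent application of $\tau_{\leq 0}$ as a colimit-preserving functor does not yield the claimed cofiber sequence $\tau_{\leq 0}A \to \tau_{\leq 0}B \to \tau_{\leq 0}C$. A concrete failure of your assertion: in $D(\mathbb{Z})$ with the standard $t$-structure, the fiber sequence $(\mathbb{Z}/2)[-1] \to \mathbb{Z} \xrightarrow{\,2\,} \mathbb{Z}$ becomes, after $\tau_{\geq 0}$, the sequence $0 \to \mathbb{Z} \xrightarrow{\,2\,} \mathbb{Z}$, which is not a fiber sequence in $D(\mathbb{Z})$.

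To repair the argument you cannot truncate all three terms at once; you need to treat $A$ asymmetrically, for example by forming $C' \defeq \mathrm{cofib}(\tau_{\geq 0}A \to \tau_{\geq 0}B)$ in $\mathcal{C}$ (so that you genuinely have a cofiber sequence with connective terms) and then comparing $\pi_0 C'$ with $\pi_0 C$, or by following the diagram chase in \cite{bbd} Theorem 1.3.6 or \cite{lurieha} around Remark 1.2.1.12, which carefully uses closure of $\mathcal{C}_{\geq n}$ and $\mathcal{C}_{\leq n}$ under extensions. Given that the paper itself treats this as a citation, deferring to the literature here would also be entirely appropriate.
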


Next, we are concerned with degree $1$ maps between objects in the heart of $\mathcal C$.

\begin{lemma} \label{mapsofdegreeoneandexactsequence}
Let $\mathcal C$ be a stable $\infty$-category with a $t$-structure and let $A, C$ be two objects in the heart $\mathcal{C}^\heartsuit$. There is a natural isomorphism
$$\phi  \colon  [C , \Sigma A] \cong \text{Ext}^1_{\mathcal{C}^\heartsuit}(C, A) $$
where
$$\phi(\beta  \colon  C \rightarrow \Sigma A) = ( 0 \rightarrow A \rightarrow \text{fib}(\beta) \rightarrow C \rightarrow 0 ).$$
\end{lemma}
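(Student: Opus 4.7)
The plan is to show that $\phi$ as defined takes values in $\mathrm{Ext}^1$, construct an inverse, and check that both maps are mutually inverse and additive. The workhorse throughout is the long exact sequence of homotopy groups from Theorem \ref{longexactsequence}.

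First I would verify that $\phi$ is well-defined. Given $\beta \colon C \to \Sigma A$, form the fiber sequence $\mathrm{fib}(\beta) \to C \xrightarrow{\beta} \Sigma A$ in $\mathcal{C}$. Since $A, C \in \mathcal{C}^\heartsuit$, we have $\pi_n A = 0$ and $\pi_n C = 0$ for $n \neq 0$, so $\pi_n \Sigma A = 0$ for $n \neq 1$. Applying Theorem \ref{longexactsequence}, for $n \geq 1$ the segment $\pi_n C \to \pi_n \Sigma A \to \pi_{n-1} \mathrm{fib}(\beta) \to \pi_{n-1} C$ collapses to force $\pi_k \mathrm{fib}(\beta) = 0$ for $k \geq 1$, and dually for $k \leq -1$ from $\pi_{k+1} \Sigma A \to \pi_k \mathrm{fib}(\beta) \to \pi_k C$. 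Thus $\mathrm{fib}(\beta) \in \mathcal{C}^\heartsuit$, and the relevant segment of the long exact sequence becomes a short exact sequence $0 \to A \to \mathrm{fib}(\beta) \to C \to 0$ in $\mathcal{C}^\heartsuit$, well-defined up to equivalence of extensions because $\mathrm{fib}(\beta)$ is determined up to equivalence in $\mathcal{C}$.

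Next I would construct the inverse $\psi$. Given a short exact sequence $0 \to A \xrightarrow{i} E \xrightarrow{p} C \to 0$ in $\mathcal{C}^\heartsuit$, form the cofiber $X \defeq \mathrm{cof}(i)$ in $\mathcal{C}$, giving a fiber sequence $A \to E \to X$. The same long-exact-sequence analysis as above, using injectivity of $i$ and surjectivity of $p$, shows $\pi_n X = 0$ for $n \neq 0$ and $\pi_0 X \cong C$, so $X \simeq C$ in $\mathcal{C}^\heartsuit$. The boundary map of this fiber sequence is a morphism $C \to \Sigma A$, and defining $\psi$ as its homotopy class gives a well-defined assignment since equivalent extensions produce equivalent boundary maps by functoriality of the cofiber.

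To show $\phi$ and $\psi$ are mutually inverse, note that by construction, passing from $\beta$ to the SES $0 \to A \to \mathrm{fib}(\beta) \to C \to 0$ and back recovers the boundary of the completed fiber sequence, which is $\beta$; going the other direction, the fiber of the boundary map of $A \to E \to C$ is equivalent to $E$, yielding the original extension. Finally I would check naturality in $A$ and $C$ (both sides are clearly bi-functorial and the construction commutes with precomposition in $C$ and postcomposition in $A$), and additivity: the abelian group structure on $[C, \Sigma A]$ coming from $\Sigma A$ being a loopspace corresponds under $\phi$ to the Baer sum. The main obstacle I expect is the additivity check: unpacking the Baer sum in terms of fibers of $\beta_1 + \beta_2$ requires a pullback/pushout diagram argument, realizing the sum in $[C, \Sigma A]$ via the codiagonal $\Sigma A \vee \Sigma A \to \Sigma A$ and matching it with the explicit construction of Baer sums as pullback-then-pushout of the two extensions; the rest is formal consequence of the $t$-structure axioms and Theorem \ref{longexactsequence}.
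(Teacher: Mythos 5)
Your proposal follows essentially the same route as the paper: both verify that $\mathrm{fib}(\beta)$ lies in the heart via the long exact sequence of Theorem \ref{longexactsequence}, both construct the inverse by sending a short exact sequence in $\mathcal{C}^\heartsuit$ to the boundary map of the corresponding fiber sequence, and both conclude by observing the two constructions are mutually inverse. The one place you go beyond the paper is the explicit attention to additivity (matching the loop-space group structure on $[C,\Sigma A]$ with the Baer sum) and naturality, which the paper leaves implicit as "clear"; that extra care is reasonable given the lemma claims a natural isomorphism of abelian groups, but it does not change the underlying argument.
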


\begin{proof}
To show that $\phi$ is well-defined, we still have to show that $\text{fib}(\beta)$ lies in the heart of $\mathcal{C}$. To do so, note that by Theorem \ref{longexactsequence} we have the long exact sequence in homotopy groups
$$ \cdots \rightarrow \pi_1(C) \rightarrow \pi_1( \Sigma A ) \rightarrow \pi_0 ( \text{fib}(\beta) ) \rightarrow \pi_0(C) \rightarrow \pi_0( \Sigma A ) \rightarrow \cdots. $$
Since $A$ and $C$ are in the heart, we have $\pi_1(C) = 0$, $\pi_0( \Sigma A ) = 0$, which shows that $\text{fib}(\beta)$ lies in the heart. Furthermore, $\pi_0(C) = C$ and $\pi_1( \Sigma A ) = A$, which means we do, in fact, get the claimed exact sequence.

The inverse map $\psi  \colon  \text{Ext}^1_{\mathcal{C}^\heartsuit}(C, A) \rightarrow [C , \Sigma A]$ is constructed as follows.
A given exact sequence
$$ 0 \rightarrow A \rightarrow B \rightarrow C \rightarrow 0$$
in the heart produces a fiber sequence
$$ A \rightarrow B \rightarrow C $$
in $\mathcal C$ which can be mapped to the boundary map $ \delta  \colon  C \rightarrow \Sigma A$. It is clear that that the two processes are mutually inverse.
\end{proof}

The following lemma clarifies an argument used multiple times during the main part of this paper.

\begin{lemma} \label{usefullemma}
Let $\mathcal C$ be a stable $\infty$-category with a $t$-structure and let $A$ be an object of $\mathcal C$ concentrated in degrees $-1$ and $0$ and $B$ an object in the heart $\mathcal{C}^\heartsuit$. Let $f$ be map $A \rightarrow B$ such that the composition $A[0] \rightarrow A \xrightarrow{f} B$ is zero. Then $f$ factorizes through an up to homotopy unique map $\tilde{f}  \colon  A[-1] \rightarrow B$, i.e. we have a commutative triangle
$$\xymatrix{
A \ar[r]^f \ar[d] & B \\
A[-1] \ar[ru]^{\tilde{f}} &
}$$
\end{lemma}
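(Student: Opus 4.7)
The plan is to extract the factorization from the Postnikov fiber sequence associated to $A$. Since $A$ is concentrated in degrees $-1$ and $0$, the two truncation maps $A[0] = \tau_{\geq 0} A \to A$ and $A \to \tau_{\leq -1} A = A[-1]$ assemble into a fiber sequence
$$ A[0] \longrightarrow A \longrightarrow A[-1] $$
in $\mathcal{C}$, which is just the standard Postnikov cofiber sequence $\tau_{\geq 0} \to \mathrm{id} \to \tau_{\leq -1}$ attached to $A$. All the other ingredients of the proof will follow from applying $\textbf{map}_\mathcal{C}(-,B)$ to this fiber sequence.

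Next I would use that $\textbf{map}_\mathcal{C}(-,B)$ is exact to obtain the fiber sequence of spectra
$$ \textbf{map}_\mathcal{C}(A[-1],B) \longrightarrow \textbf{map}_\mathcal{C}(A,B) \longrightarrow \textbf{map}_\mathcal{C}(A[0],B), $$
and pass to the associated long exact sequence in homotopy groups. The relevant portion is
$$ [\Sigma A[0],B] \longrightarrow [A[-1],B] \longrightarrow [A,B] \longrightarrow [A[0],B]. $$
Existence of $\tilde f$ is then immediate: the hypothesis says exactly that the image of $f \in [A,B]$ in $[A[0],B]$ is zero, so by exactness $f$ lifts to some class $\tilde f \in [A[-1],B]$.

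Uniqueness of $\tilde f$ amounts to injectivity of $[A[-1],B] \to [A,B]$, i.e.\ to the vanishing of the preceding group $[\Sigma A[0],B]$. This is the one spot that needs its own argument and is where I expect the crux of the proof to lie, though it is still essentially formal: since $A[0]\in\mathcal{C}^{\heartsuit}\subset \mathcal{C}_{\geq 0}$, we have $\Sigma A[0] \in \mathcal{C}_{\geq 1}$, and $B\in \mathcal{C}^{\heartsuit} \subset \mathcal{C}_{\leq 0}$, so the orthogonality axiom of a $t$-structure (namely that $\mathrm{Hom}(X,Y)=0$ whenever $X\in \mathcal{C}_{\geq n}$ and $Y \in \mathcal{C}_{\leq n-1}$) forces $[\Sigma A[0],B]=0$. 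This completes the plan; no step poses a substantial technical obstacle beyond invoking the defining properties of the $t$-structure.
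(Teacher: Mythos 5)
Your proposal is correct and takes essentially the same route as the paper: identify the Postnikov fiber sequence $A[0]\to A\to A[-1]$, pass to the long exact sequence $[\Sigma A[0],B]\to[A[-1],B]\to[A,B]\to[A[0],B]$, and use the $t$-structure orthogonality to see $[\Sigma A[0],B]=0$, which gives both existence and uniqueness of the lift.
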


\begin{proof}
We have the fiber sequence 
$$ A[0] \rightarrow {A} \rightarrow {A}[-1], $$
which implies the long exact sequence
$$ [ \Sigma {A}[0], {B} ] \rightarrow  [ {A}[-1], {B} ] \rightarrow [ {A}, {B} ] \rightarrow  [ {A}[0], {B} ]. $$
The abelian group $[ \Sigma {A}[0], {B} ]$ is zero since $\Sigma {A}[0]$ is $1$-connected and ${B}$ was assumed to be concentrated in degree $0$, so if $f  \colon  A \rightarrow B$ is a map that becomes the zero map when precomposed with $ {A}[0] \rightarrow {A}$, it factors through a map $\tilde{f} \colon  {A}[-1] \rightarrow {B} $ which is unique up to homotopy.
\end{proof}

The last part of this section is concerned with a technical lemma about the relationship between the induced map on fibers coming from a square involving degree $1$ maps and the well known connecting map from the snake lemma in homological algebra.

\begin{lemma} \label{tsnakelemma}
Let $\mathcal C$ be a stable $\infty$-category with a $t$-structure and let $A_1, A_2, C_1$ and $C_2$ be objects in the heart $\mathcal{C}^\heartsuit$ and suppose we have a commutative square
$$\xymatrix{
C_1 \ar[r]^{\beta_1} \ar[d]^{f_C} & \Sigma A_1 \ar[d]^{\Sigma f_A} \\
C_2 \ar[r]^{\beta_2}        & \Sigma A_2 
}$$
Write $\text{fib}_A \defeq \text{fib}( f_A )$, $\text{cok}_A \defeq \pi_{-1} \text{fib}_A$, $\text{fib}_C \defeq \text{fib}( f_C )$, and $\text{ker}_C \defeq \pi_0 \text{fib}_C$. Then the induced map $\pi_0 \text{fib}_C \rightarrow \pi_0 \Sigma \text{fib}_A$ agrees with the map $\delta  \colon  \text{ker}_C \rightarrow \text{cok}_A$ induced by the snake lemma for the corresponding map of exact sequences
$$\xymatrix{
0 \ar[r] & A_1 \ar[r] \ar[d] & B_1 \ar[r] \ar[d] & C_1 \ar[r] \ar[d] & 0 \\
0 \ar[r] & A_2 \ar[r]        & B_2 \ar[r]        & C_2 \ar[r]        & 0
}$$
in the heart $\mathcal{C}^\heartsuit$.
\end{lemma}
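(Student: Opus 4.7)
The plan is to extend the given square to a map of fiber sequences and then reduce the problem to the long exact sequence of a single induced fiber sequence. By Lemma \ref{mapsofdegreeoneandexactsequence}, each $\beta_i \colon C_i \to \Sigma A_i$ is the boundary morphism of a fiber sequence $A_i \to B_i \to C_i$, where $B_i \defeq \text{fib}(\beta_i)$ lies in the heart and realizes $0 \to A_i \to B_i \to C_i \to 0$. The commutativity of the given square together with the universal property of fibers produces a unique map $f_B \colon B_1 \to B_2$ in $\mathcal{C}^\heartsuit$ making the extended diagram of two horizontal fiber sequences (connected by $f_A, f_B, f_C, \Sigma f_A$) into a map of fiber sequences whose rightmost column is the one in the hypothesis.

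Next, I would take vertical fibers of all four columns. Because finite limits commute, this produces a fiber sequence
$$ \text{fib}(f_A) \to \text{fib}(f_B) \to \text{fib}(f_C) \xrightarrow{\widetilde{\beta}} \Sigma\, \text{fib}(f_A), $$
in which, by construction, $\widetilde{\beta}$ is exactly the map on vertical fibers induced by the rightmost square, so $\pi_0 \widetilde{\beta} \colon \ker_C \to \text{cok}_A$ (using $\pi_0 \Sigma\, \text{fib}(f_A) \cong \pi_{-1}\text{fib}(f_A) = \text{cok}_A$) is the map we must identify. Since each $f_X$ for $X \in \set{A, B, C}$ is a map in the heart, the long exact sequence of the vertical fiber sequence $\text{fib}(f_X) \to X_1 \to X_2$ shows that $\text{fib}(f_X)$ is concentrated in degrees $0$ and $-1$, with $\pi_0 = \ker(f_X)$ and $\pi_{-1} = \text{cok}(f_X)$. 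Applying Theorem \ref{longexactsequence} to the fiber sequence above then collapses its long exact sequence into the classical six-term Snake Lemma sequence
$$ 0 \to \ker_A \to \ker_B \to \ker_C \xrightarrow{\pi_0 \widetilde{\beta}} \text{cok}_A \to \text{cok}_B \to \text{cok}_C \to 0. $$

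The main obstacle is the final step: identifying $\pi_0 \widetilde{\beta}$ not merely as \emph{some} boundary morphism making the sequence exact, but as the specific element-chase map $\delta$ of the classical snake lemma. The plan is a direct chase inside $\mathcal{C}$: represent $c \in \ker_C$ by a pointed map from a generator of the heart into $\text{fib}(f_C)$, lift through the fiber sequence $B_1 \to C_1$ to some $b_1 \in B_1$, apply $f_B$ to obtain $f_B(b_1) \in B_2$ whose image in $C_2$ vanishes by the hypothesis on $c$, and invoke exactness of $A_2 \to B_2 \to C_2$ in the heart to extract a unique preimage $a_2 \in A_2$ whose class $[a_2] \in \text{cok}_A$ agrees with both $\delta(c)$ and $\pi_0 \widetilde{\beta}(c)$, since both assignments are governed by the same null-homotopy of the composite $\text{pt} \to C_1 \to C_2$. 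Alternatively, one can reduce to the universal case $\mathcal{C} = D(\mathcal{C}^\heartsuit)$ by naturality along the canonical realization functor, where the identification with the classical snake lemma map is standard.
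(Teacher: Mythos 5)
Your outline through the construction of the six-term exact sequence is correct and, in fact, mirrors the scaffolding the paper puts in place: realizing each $\beta_i$ as the boundary map of a fiber sequence $A_i \to B_i \to C_i$ via Lemma~\ref{mapsofdegreeoneandexactsequence}, commuting limits to get the induced fiber sequence on vertical fibers, noting that $\text{fib}(f_X)$ is concentrated in degrees $0$ and $-1$ with $\pi_0 = \ker$ and $\pi_{-1} = \text{cok}$, and then reading off a six-term exact sequence from Theorem~\ref{longexactsequence}. That much is sound and you correctly single out the remaining step as the real content.

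The gap is precisely there, and neither of your two escape routes closes it. The element chase cannot be carried out verbatim in an arbitrary stable $\infty$-category: objects of $\mathcal{C}^\heartsuit$ have no underlying sets, the heart need not have a generator, and the natural substitute --- testing against $[g, -]$ for $g \in \mathcal{C}^\heartsuit$ --- runs into the problem that $[g, -]$ is only left exact on the heart, so it neither preserves cokernels nor lets you lift along epimorphisms. The appeal to ``the same null-homotopy'' is exactly the assertion one needs to prove, not a reason for it. Your second route, reduction to $D(\mathcal{C}^\heartsuit)$ via a realization functor, is also unavailable at this level of generality: a $t$-exact functor $D^b(\mathcal{C}^\heartsuit) \to \mathcal{C}$ extending the identity on hearts exists only under additional hypotheses (e.g.\ enough injectives/projectives in the heart, or completeness conditions on the $t$-structure), none of which is assumed in the lemma. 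So the proposal identifies the obstacle but does not overcome it.

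The paper's proof is doing genuine work precisely at this point, and the mechanism is different from what you sketch. It first reformulates the classical snake map purely in terms of kernels and images via the zig-zag $\phi \circ \theta^{-1}$ of Lemma~\ref{originalsnake} (whose own verification is allowed to use Freyd--Mitchell because it is a statement entirely inside the abelian category $\mathcal{C}^\heartsuit$). It then introduces the iterated cofiber $\text{cof}(D)$ of the square $D$ as a computable intermediary object in $\mathcal{C}$: the boundary map factors as $\text{cof}_C \xrightarrow{\sim} \text{cof}(D) \to \Sigma\,\text{cof}_A$, and on $\pi_1$ these two arrows are exactly $\theta^{-1}$ and $\phi$ by the explicit homotopy-group computation of $\text{cof}(D)$. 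This factorization is what rigorously ties the $\infty$-categorical boundary map to the $1$-categorical snake map without requiring elements of $\mathcal{C}$, a realization functor, or any extra hypotheses on the $t$-structure. Your plan would need an analogous device, or at minimum an explicit argument that the relevant derived category and realization functor exist in the settings where the lemma is invoked in the body of the paper.
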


Before we begin the proof of Lemma \ref{tsnakelemma}, we want to establish some facts about the map induced by the well known snake lemma (see e.g. \cite{weibel1995introduction}, Lemma 1.3.2).

\begin{lemma} \label{originalsnake}
Suppose $\mathcal{A}$ is an abelian category and
$$\xymatrix{
0 \ar[r] & A_1 \ar[r]^{g_1} \ar[d]^{f_A} & B_1 \ar[r]^{\pi_1} \ar[d]^{f_B} & C_1 \ar[r] \ar[d]^{f_C} & 0 \\
0 \ar[r] & A_2 \ar[r]^{g_2}        & B_2 \ar[r]^{\pi_2}        & C_2 \ar[r]        & 0
}$$
is a diagram in $\mathcal{A}$ with exact rows. Write $\text{ker}_C \defeq \text{ker}(f_C)$ and $\text{cok}_A \defeq \text{cok}(f_A)$. Then:
\begin{enumerate}
\item There is an isomorphism
$$ \theta  \colon  \frac{\text{ker}( B_1 \oplus A_2 \xrightarrow{(g_2, -f_B)} B_2)}{\text{im}( A_1 \xrightarrow{(g_1 , f_A)} B_1 \oplus A_2 ) } \cong \text{ker}_C$$
induced by the composition $B_1 \oplus A_2 \rightarrow B_1 \xrightarrow{\pi_1} C_1$, where the first map is the projection onto the first summand.
\item There is a natural map 
$$ \phi  \colon  \frac{\text{ker}( B_1 \oplus A_2 \xrightarrow{(g_2, -f_B)} B_2)}{\text{im}( A_1 \xrightarrow{(g_1 , f_A)} B_1 \oplus A_2 ) } \rightarrow \text{cok}_A$$
induced by the composition $B_1 \oplus A_2 \rightarrow A_2 \rightarrow \text{cok}_A$, where the first map is projection onto the second summand.
\item The composition $ \phi \theta^{-1}  \colon  \text{ker}_C \rightarrow \text{cok}_A$ is the natural connecting map from the snake lemma.
\item If $\pi_1$ and $\pi_2$ have sections $s_1  \colon  C_1 \rightarrow B_1$, $s_2  \colon  C_2 \rightarrow B_2$ such that
$$\xymatrix{
B_1   \ar[d]^{f_B} & C_1 \ar[l]^{s_1} \ar[d]^{f_C}\\
B_2                & C_2 \ar[l]^{s_2}
}$$
commutes then $ \phi \theta^{-1} = 0$.
\end{enumerate}
\end{lemma}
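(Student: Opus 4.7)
The plan is to prove the four parts sequentially, each reducing to an elementary diagram chase in the abelian category $\mathcal{A}$. Since everything is standard homological algebra, the core of each step is just verifying well-definedness and tracking where elements go; the only slight subtlety is that in part (3) I want to recognize the formula for the snake connecting map, and in part (4) I want to exploit that the sections give a coherent choice collapsing $\phi$.

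For part (1), I would first show that $\theta$ is well-defined. Given $(b_1, a_2) \in B_1 \oplus A_2$ with $g_2(a_2) = f_B(b_1)$, the element $\pi_1(b_1)$ lies in $\text{ker}_C$ because $f_C \pi_1 (b_1) = \pi_2 f_B(b_1) = \pi_2 g_2(a_2) = 0$ by exactness of the bottom row. Moreover, any element of the form $(g_1(a_1), f_A(a_1))$ maps to $\pi_1 g_1 (a_1) = 0$, so $\theta$ descends to the quotient. Surjectivity: given $c \in \text{ker}_C$, use exactness of the top row to lift to $b_1 \in B_1$ with $\pi_1(b_1) = c$; then $\pi_2 f_B(b_1) = f_C(c) = 0$, so by exactness of the bottom row $f_B(b_1) = g_2(a_2)$ for some $a_2$, giving a preimage. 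Injectivity: if $\pi_1(b_1) = 0$, exactness of the top row produces a unique $a_1$ with $g_1(a_1) = b_1$, and then $g_2(f_A(a_1)) = f_B g_1(a_1) = g_2(a_2)$ forces $a_2 = f_A(a_1)$ by injectivity of $g_2$, so $(b_1, a_2)$ is in the image of $A_1$.

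For part (2), well-definedness of $\phi$ is immediate: an element of the image of $A_1 \to B_1 \oplus A_2$ has second component $f_A(a_1)$, which is zero in $\text{cok}_A$. For part (3), I would simply unwind the definition of the snake lemma's connecting map on an element $c \in \text{ker}_C$: lift $c$ to some $b_1 \in B_1$, apply $f_B$ to obtain an element of $\text{im}(g_2)$, lift back to $a_2 \in A_2$ using injectivity of $g_2$, and take the class in $\text{cok}_A$. The composition $\phi \theta^{-1}$ performs exactly this chase: $\theta^{-1}$ selects a representative $(b_1, a_2)$ in $B_1 \oplus A_2$ witnessing the chase, and $\phi$ projects to $[a_2] \in \text{cok}_A$, so the two maps coincide.

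The main (mildly subtle) step is part (4), where I want to show that the existence of sections $s_1, s_2$ with $f_B s_1 = s_2 f_C$ trivializes the connecting map. Given $c \in \text{ker}_C$, I would choose the canonical lift $b_1 := s_1(c)$. The commuting square of sections yields $f_B(b_1) = f_B s_1(c) = s_2 f_C(c) = s_2(0) = 0$, so one may pick $a_2 = 0$ in the snake chase. Hence $(b_1, 0)$ represents $\theta^{-1}(c)$, and $\phi$ sends this to $[0] \in \text{cok}_A$. Therefore $\phi \theta^{-1} = 0$ as claimed. This step does not require any additional hypothesis on the sections beyond the commutativity of the displayed square, which is exactly the statement that $f_B$ and $f_C$ are compatible with the splittings.
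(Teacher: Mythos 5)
Your proof is correct and takes essentially the same approach as the paper. The only difference is stylistic: you run element chases throughout (implicitly justified by Freyd--Mitchell, as the paper itself notes), whereas the paper formulates parts (1), (2), and (4) arrow-theoretically in terms of kernels, images and compositions, invoking Freyd--Mitchell only for the identification in part (3).
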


\begin{proof}
To show that $\theta$ is well-defined, we need to show:
\begin{itemize}
\item
The composition
$$ A_1 \xrightarrow{(g_1 , f_A)} \text{ker}( B_1 \oplus A_2 \xrightarrow{(g_2, -f_B)} B_2) \rightarrow B_1 \xrightarrow{\pi_1} C_1 $$
is the trivial map. This is simply because $\pi_1 g_1 = 0$. From this follows that the map 
$$\text{ker}( B_1 \oplus A_2 \xrightarrow{(g_2, -f_B)} B_2) \rightarrow B_1 \xrightarrow{\pi_1} C_1$$
factors through $\text{ker}( B_1 \oplus A_2 \xrightarrow{(g_2, -f_B)} B_2) / \text{im}( A_1 \xrightarrow{(g_1 , f_A)} B_1 \oplus A_2 ) $.
\item
The composition
$$ \text{ker}( B_1 \oplus A_2 \xrightarrow{(g_2, -f_B)} B_2) \rightarrow B_1 \xrightarrow{\pi_1} C_1 \xrightarrow{f_C} C_2 $$
is the trivial map. We have the following equalities of maps
\begin{align*}
 \text{ker}( B_1 \oplus A_2 \xrightarrow{(g_2, -f_B)} B_2) \rightarrow B_1 \xrightarrow{\pi_1} C_1 \xrightarrow{f_C} C_2  \\[-11pt] \cline{1-1}
 = \text{ker}( B_1 \oplus A_2 \xrightarrow{(g_2, -f_B)} B_2) \rightarrow B_1 \xrightarrow{f_B} B_2 \xrightarrow{\pi_2} C_2  \\[-11pt] \cline{1-1}
 = \text{ker}( B_1 \oplus A_2 \xrightarrow{(g_2, -f_B)} B_2) \rightarrow A_2 \xrightarrow{g_2} B_2 \xrightarrow{\pi_2} C_2  \\[-11pt]
\end{align*}	
The claim now follows since $\pi_2 g_2 = 0$. From this follows that the map 
$$\text{ker}( B_1 \oplus A_2 \xrightarrow{(g_2, -f_B)} B_2) \rightarrow B_1 \xrightarrow{\pi_1} C_1$$
maps into $\text{ker}_C$.
\end{itemize}
To see that $\theta$ is an isomorphism, we show two things:
\begin{itemize}
\item $\text{ker}(\theta) = 0$. This is because
$$ \text{ker}( B_1 \oplus A_2 \rightarrow B_1 \rightarrow C_1 ) = A_1 \oplus A_2 $$
which implies
\begin{align*}
\text{ker}( B_1 \oplus A_2 \rightarrow B_1 \rightarrow C_1 ) \cap \text{ker}( B_1 \oplus A_2 \xrightarrow{(g_2, -f_B)} B_2) \\
 = \text{im}( A_1 \xrightarrow{ ( g_1 , f_A ) } B_1 \oplus A_2 )
\end{align*}
Hence, $\text{ker}(\theta) = 0$.
\item $\text{cok}(\theta) = 0$. A simple diagram chase using exactness at $B_2$ shows that
$$ \text{im}( \text{ker}( B_1 \oplus A_2 \rightarrow B_2) \rightarrow B_1) = \text{ker}( \pi_2 f_B) $$
Hence, since $B_1 \rightarrow C_1$ is an epimorphism, 
$$ \text{im}( \text{ker}( B_1 \oplus A_2 \rightarrow B_2) \rightarrow B_1 \rightarrow C_1) = \text{ker}_C $$
From this follows that $\theta$ is an epimorphism.
\end{itemize}
We now need to show that $\phi$ is well-defined. To do so, we need to show that the composition
$$A_1 \xrightarrow{ (g_1 , f_A) } B_1 \oplus A_2 \rightarrow A_2 \rightarrow \text{cok}_A$$
is trivial. This is clear, however, as the composition $A_1 \xrightarrow{(g_1 , f_A)} B_1 \oplus A_2 \rightarrow A_2$ is just equal to $f_A$.

The next claim is that the map $\phi \theta^{-1}$ agrees with the map induced by the snake lemma. For simplicity, assume that $\mathcal{A}$ is the category of abelian groups.\footnote{An element-free proof can be done, of course. Our proof is sufficient by the Freyd-Mitchell embedding theorem.} The traditional way of defining the boundary map $\delta$ goes as follows. Assume $c \in \text{ker}_C$. Using surjectivity of $B_1 \rightarrow C_1$, find a preimage $b_1 \in B$ of $c$. Since $\pi_2 f_B = f_C \pi_1$, the element $f(b_1)$ lies in the kernel of $\pi_2$; hence, there is a unique $a_2 \in A_2$ such that $g_2(a_2) = f(b_1)$. The image of $\delta$ of the element $c$ is defined as the class of $a_2$ in the cokernel $\text{cok}_A$. The reason this agrees with $\phi \theta^{-1}$ is as follows. The class $[b_1 , a_2]$ is just a preimage of $c$ under the map $\theta$ and the assignment $[b_1 , a_2] \rightarrow [a_2]$ is exactly what defines the map $\phi$.

Lastly, assume $\pi_1$ and $\pi_2$ have commuting sections $s_1$ and $s_2$ respectively. Then the map $C_1 \xrightarrow{(s_1 , 0) } B_1 \oplus A_2$ descends to the inverse of $\theta$. It is then clear that $ \phi \theta^{-1} = 0$ since $\phi$ is induced by projection on the $A_2$ coordinate.
\end{proof}

%

We still need to introduce some new terminology. Suppose  $D$ is a commutative square
$$\xymatrix{
X_1 \ar[r]^{g_1} \ar[d]^{f_X} & Y_1 \ar[d]^{f_Y} \\
X_2 \ar[r]^{g_2}  & Y_2
}$$
in a stable $\infty$-category $\mathcal{C}$. Define the \emph{total cofiber} of $D$ as
$$\text{cof}(D) \defeq \text{cof}( \text{cof}( g_1 ) \rightarrow \text{cof}( g_2 ) ) \simeq  \text{cof}( \text{cof}( f_X ) \rightarrow \text{cof}( f_Y ) )$$
Define $\Box \defeq \Delta^1 \times \Delta^1$. It is clear that taking total cofibers is functorial in the sense that it defines an exact functor
$$ \text{cof}  \colon  \text{Fun}(\Box, \mathcal{C}) \rightarrow \mathcal{C}, $$
which is the left adjoint to the functor that sends an object $X \in \mathcal{C}$ to the square
$$\xymatrix{
0 \ar[r] \ar[d] & 0 \ar[d] \\
0 \ar[r]  & X.
}$$
The reason we are interested in this construction is that if we take the objects $X_i$ and $Y_i$ to be in the heart of a $t$-structure on $\mathcal{C}$, this allows us to model chain complexes of length $\leq 3$ in $\mathcal{C}$.\footnote{This construction generalizes to arbitrary length by defining total cofibers of $n$-cubes in a similar manner.} The following lemma will make this precise.

\begin{lemma}
Suppose $D$ is a commutative square
$$\xymatrix{
A_1 \ar[r]^{g_1} \ar[d]^{f_A} & B_1 \ar[d]^{f_B} \\
A_2 \ar[r]^{g_2}  & B_2
}$$
with values in $\mathcal{C}^\heartsuit$. Then:
\begin{enumerate}
\item The total cofiber \text{cof}(D) is concentrated in degrees $0$,$1$ and $2$. Moreover, we have
\begin{align*}
\pi_2 \text{cof}(D) &= \text{ker}( A_1 \xrightarrow{(g_1 , f_A)} B_1 \oplus A_2 ) \\
\pi_1 \text{cof}(D) &= \frac{\text{ker}( B_1 \oplus A_2 \xrightarrow{(g_2, -f_B)} B_2)}{\text{im}( A_1 \xrightarrow{(g_1 , f_A)} B_1 \oplus A_2 ) } \\
\pi_0 \text{cof}(D) &= \text{cok}( B_1 \oplus A_2 \xrightarrow{(g_2, -f_B)} B_2 ) 
\end{align*}
\item If the square $D$ has the form
$$\xymatrix{
0 \ar[r] \ar[d] & B_1 \ar[d]^{f_B} \\
0 \ar[r]  & B_2,
}$$
then $\text{cof}(D) = \text{cof}(f_B)$.
\item If the square $D$ has the form
$$\xymatrix{
A_1 \ar[r] \ar[d]^{f_A} & 0 \ar[d] \\
A_2 \ar[r]  & 0,
}$$
then $\text{cof}(D) = \Sigma \text{cof}(f_A)$.
\item If the square $D$ has the form
$$\xymatrix{
0  \ar[r] \ar[d] & B_1 \ar[d] \\
A_2 \ar[r]  & 0,
}$$
then $\text{cof}(D) = \Sigma ( A_2 \oplus B_1 )$.
\end{enumerate}
\end{lemma}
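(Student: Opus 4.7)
The plan is to realize $\text{cof}(D)$ as a two-step iterated cofiber of a short chain of objects in the heart and then extract the homotopy groups from two successive applications of Theorem~\ref{longexactsequence}.

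First, I would observe that in a stable $\infty$-category the pushout $P \defeq B_1 \sqcup_{A_1} A_2$ sits in a canonical cofiber sequence
$$A_1 \xrightarrow{(g_1,f_A)} B_1 \oplus A_2 \longrightarrow P,$$
and the pasting law for pushouts, together with the commutativity of $D$, produces a natural equivalence $\text{cof}(D) \simeq \text{cof}(P \to B_2)$, where the map $P \to B_2$ is the one induced via the universal property from $(g_2,-f_B)\colon B_1 \oplus A_2 \to B_2$. In this way $\text{cof}(D)$ becomes the total cofiber of the length-three chain $A_1 \to B_1 \oplus A_2 \to B_2$ of objects of $\mathcal{C}^\heartsuit$.

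Next, I would apply Theorem~\ref{longexactsequence} twice. Since $A_1, B_1, A_2$ lie in the heart, the long exact sequence attached to $A_1 \to B_1 \oplus A_2 \to P$ collapses and yields $\pi_1 P = \ker(g_1,f_A)$, $\pi_0 P = \text{cok}(g_1,f_A)$, with all other homotopy groups of $P$ vanishing. Feeding this into the long exact sequence of $P \to B_2 \to \text{cof}(D)$ and using that $B_2$ is concentrated in degree $0$ immediately gives the claimed vanishing range $\pi_n \text{cof}(D) = 0$ for $n \notin \{0,1,2\}$ as well as the three explicit formulas: $\pi_2$ is $\pi_1 P$, $\pi_0$ is the cokernel of $\pi_0 P \to B_2$, and $\pi_1$ is the kernel of that map, which precisely unfolds to $\ker(g_2,-f_B)/\text{im}(g_1,f_A)$.

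Parts (2), (3), (4) then follow by direct specialisation of the $P$-description. For (2) the pushout $P$ degenerates to $B_1$, so $\text{cof}(D) \simeq \text{cof}(f_B)$. For (3) we have $B_1 = B_2 = 0$, hence $P \simeq \text{cof}(f_A)$ and $\text{cof}(D) \simeq \Sigma\text{cof}(f_A)$. For (4) both $A_1 = 0$ and $B_2 = 0$, so $P \simeq B_1 \oplus A_2$ and $\text{cof}(D) \simeq \Sigma(B_1 \oplus A_2)$. The only point requiring care, rather than a genuine obstacle, will be to match the sign of the boundary homomorphism $\pi_0 P \to B_2$ with the map $(g_2,-f_B)$ appearing in the lemma; once a sign convention for the cofiber sequence witnessing $P$ as a pushout is fixed, commutativity of $D$ (i.e.\ the vanishing of $A_1 \to B_1 \oplus A_2 \to B_2$) forces exactly this expression, and everything else is a formal diagram chase through the two long exact sequences.
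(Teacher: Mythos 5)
Your proof is correct, but it follows a genuinely different route from the paper's. The paper proves parts (2)--(4) directly from the definition of iterated cofiber, then treats the special case $A_1 = 0$ via an explicit fiber sequence of cubical diagrams to get the fiber sequence $\text{cof}(D) \to \Sigma(A_2\oplus B_1) \to \Sigma B_2$, and finally attacks the general case with a second cube-level fiber sequence, extracting the homotopy groups from two resulting exact sequences. You instead factor the square $D$ vertically through the pushout $P \defeq B_1\sqcup_{A_1}A_2$, use the paste law for iterated cofibers to reduce $\text{cof}(D)$ to $\text{cof}(P\to B_2)$, compute $\pi_*P$ from the Mayer--Vietoris cofiber sequence $A_1 \to B_1\oplus A_2 \to P$, and then apply the long exact sequence for $P\to B_2\to \text{cof}(D)$. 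This is cleaner and more modular: the pushout presentation isolates the ``middle term'' $B_1\oplus A_2$ conceptually rather than through explicit cube manipulation, and parts (2)--(4) fall out immediately as degenerate instances of the same calculation instead of being proved separately up front. What the paper's argument buys, by contrast, is that it never invokes the pushout presentation as a black box --- every step is visible at the level of fiber sequences of squares, which makes it easier to chase the connecting maps (something the paper needs explicitly in the surrounding Lemma \ref{tsnakelemma}). The sign caveat you flag at the end is the right one to worry about, and your resolution of it via vanishing of the composite $A_1\to B_1\oplus A_2\to B_2$ matches the paper's convention.
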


\begin{proof}
Point (2) and (3) are trivial. For point (4), note that the space of morphisms $\text{Map}_\mathcal{C}(0,0)$ is contractible, hence the square 
$$\xymatrix{
0  \ar[r] \ar[d] & B_1 \ar[d] \\
A_2 \ar[r]  & 0
}$$
is trivially commutative and taking vertical cofibers realizes to the zero map $ A_2 \rightarrow \Sigma B_1$ which implies that $\text{cof}(D) = \text{cof}( A_2 \xrightarrow{0} \Sigma B_1 ) = \Sigma (A_2 \oplus B_1)$.

Now assume $D$ is of the shape
$$\xymatrix{
0  \ar[r] \ar[d] & B_1 \ar[d] \\
A_2 \ar[r]  & B_2.
}$$
Then we have the following fiber sequence of square diagrams,
$$\begin{tikzcd}[row sep=1.5em, column sep = 1.5em]
   0 \arrow[rr] \arrow[dr] \arrow[dd] && 0 \ar[rr] \ar[dd] \ar[dr] &&
   0 \arrow[dd] \arrow[dr] \\
   & B_1 \arrow[rr] \arrow[dd, "f_B" near start ] && B_1 \arrow[rr] \arrow[dd] &&
   0 \arrow[dd] \\
   A_2 \arrow[rr] \arrow[dr, "g_2"] && A_2        
   \arrow[rr] \arrow[dr] && 0 \arrow[dr] \\
   & B_2 \arrow[rr] && 0 \arrow[rr] && \Sigma B_2.
  \end{tikzcd}$$
Taking vertical cofibers of the right hand cube results in the square
$$\xymatrix{
A_2 \ar[r] \ar[d]^0 & 0 \ar[d] \\   
\Sigma B_1 \ar[r]^{-f_B} & \Sigma B_2
}$$
where the resulting square
$$\xymatrix{
A_2 \ar[r] \ar[d] & 0 \ar[d] \\   
0 \ar[r] & \Sigma B_2
}$$
classifies the map $g_2  \colon  \Sigma A_2 \rightarrow \Sigma B_2$. This means taking further cofibers results in the map
$$\Sigma (A_2 \oplus B_1) \xrightarrow{(g_2, -f_B)} \Sigma B_2.$$
This means we have a fiber sequence
$$ \text{cof}(D) \rightarrow \Sigma (A_2 \oplus B_1) \xrightarrow{(g_2, -f_B)} \Sigma B_2$$
from which we can read off that $\text{cof}(D)$ is concentrated in degree $0$ and $1$ with the homotopy groups
\begin{align*}
\pi_1 \text{cof}(D) & = \text{ker}( A_2 \oplus B_1 \xrightarrow{(g_2, -f_B)} B_2 ) \\
\pi_0 \text{cof}(D) & = \text{cok}( A_2 \oplus B_1 \xrightarrow{(g_2, -f_B)} B_2 ).
\end{align*}

Now assume $D$ is a general commutative square of the form
$$\xymatrix{
A_1 \ar[r]^{g_1} \ar[d]^{f_A} & B_1 \ar[d]^{f_B} \\
A_2 \ar[r]^{g_2}  & B_2.
}$$
We have the following fiber sequence of square diagrams,
$$\begin{tikzcd}[row sep=1.5em, column sep = 1.5em]
   \Omega A_1 \arrow[rr] \arrow[dr] \arrow[dd] && 0 \ar[rr] \ar[dd] \ar[dr] &&
   A_1 \arrow[dd,"f_A" near end] \arrow[dr,"g_1"] \\
   & 0 \arrow[rr] \arrow[dd] && B_1 \arrow[rr] \arrow[dd] &&
   B_1 \arrow[dd] \\
   0 \arrow[rr] \arrow[dr] && A_2        
   \arrow[rr] \arrow[dr] && A_2 \arrow[dr] \\
   & 0 \arrow[rr] && B_2 \arrow[rr] && B_2,
  \end{tikzcd}$$
which produces the following two exact sequences
$$ 0 \rightarrow \pi_2 \text{cof}(D) \rightarrow A_1 \rightarrow \text{ker}( A_2 \oplus B_1 \xrightarrow{(g_2, -f_B)} B_2  ) \rightarrow \pi_1 \text{cof}(D) \rightarrow 0 $$
as well as
$$ 0 \rightarrow \text{cok}( A_2 \oplus B_1 \xrightarrow{(g_2, -f_B)} B_2 ) \rightarrow \pi_0 \text{cof}(D) \rightarrow 0, $$
which proves point (1).
\end{proof}

We are now ready to prove Lemma \ref{tsnakelemma}. Assume now that we have a diagram with short exact rows,
$$\xymatrix{
0 \ar[r] & A_1 \ar[r]^{g_1} \ar[d]^{f_A} & B_1 \ar[r] \ar[d]^{f_B} & C_1 \ar[r] \ar[d]^{f_C} & 0 \\
0 \ar[r] & A_2 \ar[r]^{g_2}        & B_2 \ar[r]        & C_2 \ar[r]        & 0,
}$$
and write $\text{cof}_A \defeq \text{cof}( f_A )$, $\text{cof}_B \defeq \text{cof}( f_B )$,$\text{cof}_C \defeq \text{cof}( f_C )$, $\text{ker}_C \defeq \pi_0 \text{fib}_C$ and $\text{cok}_A \defeq \pi_{-1} \text{fib}_A$.

\begin{proof}[Proof of Lemma \ref{tsnakelemma}.] We now come back to the claim that the map
$$ \text{fib}_C \rightarrow \Sigma \text{fib}_A $$
induces the map described by the snake lemma in $\pi_0$. Note that $\text{cof}_A = \Sigma \text{fib}_A$, and similarly for $C$, so to proof that $ \pi_0 \text{fib}_C \rightarrow \pi_0 \Sigma \text{fib}_A $ is the map induced by the snake lemma, it suffices to show the same thing for $\pi_1$ on the cofibers.

Take the square $D$
$$\xymatrix{
A_1 \ar[r]^{g_1} \ar[d]^{f_A} & B_1 \ar[d]^{f_B} \\
A_2 \ar[r]^{g_2}  & B_2.
}$$
There is a commuting cube
$$\begin{tikzcd}[row sep=1.5em, column sep = 1.5em]
   A_1 \ar[rr] \ar[dd] \ar[dr] &&
   0 \arrow[dd] \arrow[dr] \\
   & B_1 \arrow[rr] \arrow[dd] &&
   C_1 \arrow[dd] \\
   A_2  \arrow[rr] \arrow[dr] && 0 \arrow[dr] \\
   & B_2 \arrow[rr] && C_2.
  \end{tikzcd}$$
Taking total cofibers yields a map $\text{cof}(D) \rightarrow \text{cof}_C$. It is clear that the map induced on $\pi_1$ of this is the map $\theta$ described in lemma \ref{originalsnake}. Moreover, the map in $\pi_0$ is an isomorphism as well and $\pi_2(D) = 0$ since $A_1 \rightarrow B_1$ is injective, hence $\text{cof}(D) \rightarrow \text{cof}_C$ is an equivalence.

Now take the fiber sequence of commutative squares
$$\begin{tikzcd}[row sep=1.5em, column sep = 1.5em]
   0 \arrow[rr] \arrow[dr] \arrow[dd] && A_1 \ar[rr] \ar[dd] \ar[dr] &&
   A_1 \arrow[dd] \arrow[dr] \\
   & B_1 \arrow[rr] \arrow[dd] && B_1 \arrow[rr] \arrow[dd] &&
   0 \arrow[dd] \\
   0 \arrow[rr] \arrow[dr] && A_2        
   \arrow[rr] \arrow[dr] && A_2 \arrow[dr] \\
   & B_2 \arrow[rr] && B_2 \arrow[rr] && 0.
  \end{tikzcd}$$
Taking total cofibers gives the fiber sequence
$$\text{cof}_B \rightarrow \text{cof}(D) \rightarrow \Sigma \text{cof}_A$$
Here it is clear that the map $\text{cof}(D) \rightarrow \Sigma \text{cof}_A$ on $\pi_1$ becomes the map $\phi$ in lemma \ref{originalsnake}. Taking all things together we see that the map
$$ (\text{cof}_C \rightarrow \Sigma \text{cof}_A) \simeq (\text{cof}_C \rightarrow \text{cof}(D) \rightarrow \Sigma \text{cof}_A) $$
gives the map $\phi \theta^{-1}$ on $\pi_1$, which by \ref{originalsnake} is the map induced by the snake lemma.
\end{proof}

\subsection{Functor categories and $t$-structures}
Given a stable $\infty$-category $\mathcal C$ and a small $\infty$-category $\mathcal{D}$, we know that $\text{Fun}(\mathcal{D},\mathcal C)$ is again a stable $\infty$-category. If we have a $t$-structure on $\mathcal{C}$, we can put a natural $t$-structure on $\text{Fun}(\mathcal{D},\mathcal{C})$:

\begin{definition} \label{objectwisetstructure}
Suppose $\mathcal C$ is a stable $\infty$-category with $t$-structure and $\mathcal{D}$ a small $\infty$-category. The object-wise $t$-structure on $\text{Fun}(\mathcal{D},\mathcal{C})$ is defined via
\begin{eqnarray*}
\text{Fun}(\mathcal{D},\mathcal{C})_{\leq 0}  \defeq & \text{Fun}(\mathcal{D},\mathcal{C}_{\leq 0}) \\
\text{Fun}(\mathcal{D},\mathcal{C})_{\geq 0}  \defeq & \text{Fun}(\mathcal{D},\mathcal{C}_{\geq 0}).
\end{eqnarray*}
We view the category of functors $\mathcal{D} \rightarrow \mathcal{C}_{\leq 0}$ as the full subcategory of functors $\mathcal{D} \rightarrow \mathcal{C}$ with values in the subcategory $\mathcal{C}_{\leq 0}$ and similarly for $\geq 0$.

\end{definition}

\begin{proof}
We have to check three things:
\begin{itemize}
\item
$\text{Fun}(\mathcal{D},\mathcal{C})_{\leq 0}$ is closed under $\Omega$. This is true since limits are computed object-wise and $\mathcal C_{\leq 0}$ is closed under limits. Similarly, $\text{Fun}(\mathcal{D},\mathcal{C})_{\geq 0}$ is closed under $\Sigma$.
\item Given $X$ in $\text{Fun}(\mathcal{D},\mathcal{C})_{\geq 1}$ and $Y$ in $\text{Fun}(\mathcal{D},\mathcal{C})_{\leq 0}$ the abelian group $\pi_0 \text{Nat}(X,Y)$ is zero. To see this, note that the valuewise adjunction between the inclusion of $\mathcal{C}_{\geq 1}$ in $\mathcal{C}$ and $\tau_{\geq 1}$ induces an adjunction on the functor categories. This gives the equivalence of mapping spaces (but not mapping spectra!)
$$\text{Nat}_\mathcal{C}(X,Y) \simeq \text{Nat}_\mathcal{C}(X,\tau_{\geq 1} Y) \simeq 0$$
since $\tau_{\geq 1} Y$ is value-wise the zero object, and therefore zero in $\text{Fun}(\mathcal{D},\mathcal{C})$.
\item For any $X$ in $\text{Fun}(\mathcal{D},\mathcal{C})$ there is a fiber sequence
$$X_1 \rightarrow X \rightarrow X_0$$
with $X_1$ in $\text{Fun}(\mathcal{D},\mathcal{C})_{\geq 1}$ and $X_0$ in $\text{Fun}(\mathcal{D},\mathcal{C})_{\leq 0}$. To see this, note that we have a fiber sequence $\tau_{\geq 1} \rightarrow \text{id}_{\mathcal C} \rightarrow \tau_{\leq 0}$ of functors $\mathcal{C} \rightarrow \mathcal{C}$.  Precomposing with $X$ gives the fiber sequence
$$\tau_{\geq 1} X \rightarrow X \rightarrow \tau_{\leq 0}, X$$
which is our desired fiber sequence.
\end{itemize} \end{proof}

\begin{remark}
The heart of this $t$-structure is given as
$$ \text{Fun}(\mathcal{D}, \mathcal{C})^\heartsuit \simeq \text{Fun}(\mathcal{D}, \mathcal{C}^\heartsuit) \simeq \text{Fun}(h\mathcal{D}, \mathcal{C}^\heartsuit),$$
where the right equivalence follows from $\mathcal{C}^\heartsuit$ being a $1$-category.
\end{remark}

\begingroup
\setlength{\emergencystretch}{8em}
\printbibliography
\endgroup

\end{document}